\definecolor{MyDarkBlue}{rgb}{0, 0.0, 0.45} 
\definecolor{MyDarkRed}{rgb}{0.45, 0.0, 0} 
\definecolor{MyDarkGreen}{rgb}{0, 0.45, 0} 
\definecolor{MyLightGray}{gray}{.90}
\definecolor{MyLightGreen}{rgb}{0.5, 0.99, 0.5}
\theoremstyle{plain}
\newtheorem{thm}{Theorem}[section]
\newtheorem*{theorem-non}{Theorem}
\newtheorem{lem}[thm]{Lemma}
\newtheorem{prop}[thm]{Proposition}
\theoremstyle{definition}
\theoremstyle{remark}
\newtheorem*{rem}{Remark}
\definecolor{dred}{rgb}{.65, 0, 0.15}
  \def\CC{\mathbb{C}}                       \def\ZZ{\mathbb{Z}}
\def\<{\langle} \def\>{\rangle}
\def\cond{\mathrm{cond}\,}
\def\Res{\mathrm{Res}}
\begin{document}

\title{On the third moment of $L\big(\tfrac{1}{2}, \chi_{\scriptscriptstyle d}\big)$ {II}: \\ 
the number field case}	
\author{Adrian Diaconu\footnote{{School of Mathematics, University of Minnesota, Minneapolis, MN 55455,
Email: cad@umn.edu}} \, 
and Ian Whitehead\footnote{{Department of Mathematics, Statistics, and Computer Science, Macalester College, St. Paul, MN 55105, Email: iwhitehe@umn.edu}}         
}
\date{}	                
\maketitle

\begin{abstract}
\noindent We establish a smoothed asymptotic formula for the third moment of quadratic {D}irichlet $L$-functions at the central value. In addition to the main term, which is known, we prove the existence of a secondary term of size $x^{\frac{3}{4}}$. The error term in the asymptotic formula is on the order of $O(x^{\frac{2}{3}+\delta})$ for every $\delta > 0.$ 
\end{abstract}

\section{Introduction} 

{\bf Statement of the main results.} The object of this sequel of \cite{D} is to study the analytic continuation of the 
{D}irichlet generating series 
\begin{equation} \label{eq: gen-series-to-study}
Z_{\scriptscriptstyle 0}(s)\, =  
\sideset{}{^*}\sum_{d} L\big(\tfrac{1}{2}, \chi_{\scriptscriptstyle 2 d} \big)^{\! \scriptscriptstyle 3} d^{\scriptscriptstyle - s}
\end{equation} 
where the star indicates that the sum is over all square-free odd positive integers, \!associated to the central values 
of quadratic {D}irichlet {$L$}-functions. The series \eqref{eq: gen-series-to-study} is absolutely convergent for complex 
$s$ with sufficiently large real part -- in fact, by a well-known result of Heath-Brown \cite{H-B}, for $\Re(s) > 1.$

Our main result is the following

\vskip5pt
\begin{thm} \label{Main Theorem A} --- The function $Z_{\scriptscriptstyle 0}(s)$ has meromorphic continuation 
to the half-plane $\Re(s) > \frac{2}{3}.$ It is analytic in this region, except for a pole of order seven at $s = 1,$ and 
a simple pole at $s = \frac{3}{4}$ with residue
\begin{equation*}
\underset{s=\frac{3}{4}}{\Res}\; Z_{\scriptscriptstyle 0}(s) = \tfrac{9}{256\pi} 2^{\scriptscriptstyle \frac{1}{4}} (-181+128\sqrt{2}) 
\Gamma\!\left(\tfrac{1}{4}\right)^{\! \scriptscriptstyle 4} \zeta\!\left(\tfrac{1}{2}\right)^{\! \scriptscriptstyle 7} \cdot 
\prod_{p\neq 2\; \mathrm{ prime}} P(p^{\scriptscriptstyle -\frac{1}{2}})\approx-.0034
\end{equation*}
where 
$
P(p^{\scriptscriptstyle -\frac{1}{2}}) = 
\big(1-p^{\scriptscriptstyle -\frac{1}{2}}\big)^{\! 5} \big(1+p^{\scriptscriptstyle -\frac{1}{2}}\big)
\big(1+4p^{\scriptscriptstyle -\frac{1}{2}}+11p^{\scriptscriptstyle -1}+10p^{\scriptscriptstyle -\frac{3}{2}} - 
11p^{\scriptscriptstyle -2}+11p^{\scriptscriptstyle -3}-4p^{\scriptscriptstyle -\frac{7}{2}}-p^{\scriptscriptstyle -4}\big).
$ 

Moreover, for every small $\delta > 0$ and $s\in \mathbb{C}$ with 
$\frac{2}{3} < \Re(s) < 1 + \delta$ and $|\Im(s)| > 1,$ we have the estimate 
\begin{equation*} \label{eq: estimate-Z0}
Z_{\scriptscriptstyle 0}(s) \ll_{\scriptscriptstyle \delta} |s|^{\scriptscriptstyle 5(1 - \Re(s)) \, + \, 6\delta}.
\end{equation*}
\end{thm}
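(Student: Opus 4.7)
The approach I would take is to embed $Z_{\scriptscriptstyle 0}(s)$ into a multiple {D}irichlet series in two complex variables, with enough built-in symmetry to force the desired continuation. Concretely, one introduces
$$
Z(s,w) \;=\; \sideset{}{^*}\sum_{d} L(s,\chi_{\scriptscriptstyle 2d})^{\scriptscriptstyle 3}\, d^{\scriptscriptstyle -w}
$$
corrected by local factors at the prime $2$ and at primes dividing $d$ so that the specialization $Z_{\scriptscriptstyle 0}(w) = Z(\tfrac12,w)$ holds and the symmetries are as clean as possible. The series $Z(s,w)$ is absolutely convergent in a region of the form $\{\Re(w) > 1,\, \Re(s) > 1 - \vep\}$ in view of Heath-Brown's bound, and the task is to meromorphically continue past the natural boundary $\Re(w) = 1$ down to $\Re(w) > \tfrac23$.

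The first step is to establish two independent functional equations. The functional equation in the $s$-variable is obtained by cubing the individual functional equations of $L(s,\chi_{\scriptscriptstyle 2d})$, which produces a relation between $Z(s,w)$ and an auxiliary series at $(1-s, w + \tfrac32(s-\tfrac12))$ with a companion character sum. The functional equation in the $w$-variable comes from opening the cube of the L-function into a sum over triples $(n_1, n_2, n_3)$, interchanging the order of summation, and applying quadratic reciprocity together with a Poisson-type summation over $d$; this relates $Z(s,w)$ to a series with $w \mapsto \tfrac32 - 2w - 3s + \cdots$. These two involutions generate a group of transformations of the $(s,w)$-plane whose orbit of the initial region of absolute convergence yields meromorphic continuation of $Z(s,w)$, and hence of $Z_{\scriptscriptstyle 0}(w)$, to a substantially larger region.

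The polar structure then emerges from tracking how the natural poles propagate through these functional equations. The pole of order seven at $w = 1$ arises from the interaction, under the Weyl group orbit, of the triple pole of $L(s,\chi)^{\scriptscriptstyle 3}$ at $s = 1$ with the simple pole of the sum over $d$ at $w = 1$, the precise multiplicity seven being consistent with the expected $x(\log x)^{\scriptscriptstyle 6}$ main term of the smoothed third moment. The simple pole at $w = \tfrac34$ is the characteristic secondary pole for the cubic moment: it appears at the image of the pole at $s = 1$ under the $s$-functional equation, that is, on the hyperplane $2w + 3s = \tfrac52$ intersected with $s = \tfrac12$. Computing its residue is then an explicit though lengthy calculation, combining archimedean gamma factors, local factors $P(p^{\scriptscriptstyle -1/2})$ arising from Euler product manipulations, and the values $\zeta(\tfrac12)^{\scriptscriptstyle 7}$ and $\Gamma(\tfrac14)^{\scriptscriptstyle 4}$ that appear naturally from the quadratic Gauss-sum / theta contribution.

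The growth estimate for $|\Im(s)|$ large is obtained via Phragm\'en-Lindel\"of interpolation: on $\Re(s) = 1 + \delta$ absolute convergence gives a bound of order $|s|^{\scriptscriptstyle \vep}$, while on a line close to $\tfrac23$ the functional equations together with convexity bounds for $L(\tfrac12,\chi_{\scriptscriptstyle 2d})$ (uniform in $d$) give polynomial growth with exponent roughly $\tfrac53$; interpolating between the two yields the stated exponent $5(1 - \Re(s)) + 6\delta$. The genuine obstacle of the proof, and what distinguishes this paper from earlier work, is pushing the continuation past the natural barrier at $\Re(w) = \tfrac34$ all the way to $\tfrac23$. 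The Weyl-group symmetries alone halt at the $\tfrac34$ line, and to proceed further one must carry out a finer analysis of the residual integrals appearing when the two functional equations are iterated -- in practice, this means meromorphically continuing certain auxiliary shifted-convolution series attached to Gauss sums and controlling the error integrals from the relevant approximate-functional-equation decomposition, which is the most delicate technical step of the argument.
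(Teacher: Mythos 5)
Your proposal identifies the right circle of ideas---multiple Dirichlet series, functional equations, Bochner-style continuation, residue calculus at $w=\tfrac34$---but it misses what is actually the hard part of the theorem and the paper's main contribution.

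First, a structural point: the multiple Dirichlet series with clean functional equations is \emph{not} the series $Z_{\scriptscriptstyle 0}$ over fundamental discriminants. It is a modified sum over \emph{all} positive $d$, with twisted-multiplicative coefficients $H(m_{\scriptscriptstyle 1},m_{\scriptscriptstyle 2},m_{\scriptscriptstyle 3},d)$ replacing the bare $L$-function products at primes $p$ with $p^2\mid d$. You gesture at this with ``corrected by local factors,'' but then write $Z_{\scriptscriptstyle 0}(w)=Z(\tfrac12,w)$, which is false: the modified series and the fundamental-discriminant series differ by the contribution of non-squarefree $d$. The paper works with a four-variable family $Z^{(c)}(s_{\scriptscriptstyle 1},s_{\scriptscriptstyle 2},s_{\scriptscriptstyle 3},s_{\scriptscriptstyle 4};\cdot,\cdot)$ carrying the full $D_4$ Weyl group action (not a two-variable truncation, which would only see a smaller group and likely infinite), and this family continues to all of $\mathbb{C}^4$ by Bochner. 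The continuation of the $D_4$ series is therefore \emph{not} the obstacle; your claim that ``the Weyl-group symmetries alone halt at the $\tfrac34$ line'' misdiagnoses the difficulty.

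The actual obstacle is the \emph{sieve}. To recover $Z_{\scriptscriptstyle 0}$ from the modified $Z^{(c)}$ one writes a M\"obius sum over $h$ (equation \eqref{mobius}), where the $h$-th term is $Z(\cdot;h)$, itself expressed via \eqref{eq: fundamental-eq-h} as a finite combination of the $Z^{(h)}$. Each individual summand is entire in $\Re(s)>0$ after clearing the $s=1$ and $s=\tfrac34$ poles, but the \emph{infinite} sum over $h$ must converge. The convexity bound of Proposition \ref{convexity-bound} gives convergence only for $\Re(s)>\tfrac34$; that is why earlier work (Zhang, Young) stopped there. The paper's new idea is the recursive refinement (Proposition \ref{key-proposition}): one plays the convexity bound against explicit upper and lower bounds for the $p$-parts $f_{\mathrm{odd}}$, $f^{\pm}_{\mathrm{even}}$ (Lemma \ref{Estimate inverse even part zeven}), peeling off one prime at a time, which strictly improves the exponent of $c_{\scriptscriptstyle 3}$ and makes the sieve absolutely convergent down to $\Re(s)>\tfrac23$. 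None of your proposed mechanisms---Poisson summation, Gauss-sum shifted convolutions, approximate functional equations---appear, nor is it clear they could substitute: they give information about an individual $Z^{(c)}$ in the $s$-aspect, whereas what is needed is uniformity in $c$, i.e.\ control of the conductor of the twist, which is exactly what the $p$-part dominance argument provides. Your residue computation sketch and Phragm\'en--Lindel\"of step are fine in spirit, but without the sieve and the recursive bound the continuation past $\Re(s)=\tfrac34$ is unsupported.
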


Meromorphic continuation of $Z_{\scriptscriptstyle 0}(s)$ beyond $\Re(s)=1$ and analysis of the principal part at $s=1$ have already been given in \cite{Sound}, \cite{DGH}, and \cite{Young}. Our focus here is on the pole at  $s = \frac{3}{4}$ and further meromorphic continuation to $\Re(s) > \frac{2}{3}.$ As a consequence, we have the following smoothed asymptotic formula for the cubic moment of quadratic {D}irichlet {$L$}-functions.

\vskip5pt
\begin{thm} \label{Main Theorem B} --- Let $W: (0, \infty) \to [0, 1]$ be a smooth function with compact support 
contained in $[1\slash 2, 1],$ and satisfying 
\begin{equation}  \label{eq: weight-function}
|W^{\scriptscriptstyle(j)}(u)| \le 1 \;\;\,  \text{for $0\le j \le 3$ and $u \in \mathbb{R}$}.
\end{equation} 
Letting $\widehat{W}$ denote the Mellin transform of $W,$ then, for every $x \ge 1$ and small $\delta > 0,$ we have 
\begin{equation*} 
\sideset{}{^*}\sum_{d} L\big(\tfrac{1}{2}, \chi_{\scriptscriptstyle 2 d} \big)^{\! \scriptscriptstyle 3}\, 
W\big(\tfrac{d}{x}\big) \, = \, x\, Q_{\scriptscriptstyle W}(\log \, x)\, + \, 
\underset{s = \frac{3}{4}}{\Res} \; Z_{\scriptscriptstyle 0}(s) \cdot
\widehat{W}\big(\tfrac{3}{4}\big) \, x^{\scriptscriptstyle \frac{3}{4}}
\, + \; O_{\scriptscriptstyle \delta}\left(x^{\scriptscriptstyle \frac{2}{3} + \delta}\right)
\end{equation*} 
where $Q_{\scriptscriptstyle W}(u)$ is a computable degree six polynomial.
\end{thm}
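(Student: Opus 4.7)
The plan is to deduce Theorem~\ref{Main Theorem B} from Theorem~\ref{Main Theorem A} by Mellin inversion and a contour shift. First I would apply Mellin inversion: since $W$ has compact support in $[1/2,1]$ and the Dirichlet series defining $Z_{\scriptscriptstyle 0}(s)$ converges absolutely for $\Re(s) > 1$, one has
\begin{equation*}
\sideset{}{^*}\sum_{d} L\bigl(\tfrac{1}{2}, \chi_{\scriptscriptstyle 2d}\bigr)^{\!\scriptscriptstyle 3} \, W\bigl(\tfrac{d}{x}\bigr)
\,=\, \frac{1}{2\pi i} \int_{(c)} Z_{\scriptscriptstyle 0}(s)\, \widehat{W}(s)\, x^{s}\, ds
\end{equation*}
for any $c > 1$. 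I would then shift the contour down to $\Re(s) = \tfrac{2}{3} + \delta$, picking up the residues at the two poles of $Z_{\scriptscriptstyle 0}$ in the strip.

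By Theorem~\ref{Main Theorem A}, the only singularities of the integrand inside $\tfrac{2}{3} + \delta < \Re(s) < c$ are a pole of order seven at $s = 1$ and a simple pole at $s = \tfrac{3}{4}$. The simple residue at $s = \tfrac{3}{4}$ is immediate and produces exactly the advertised secondary term $\underset{s=3/4}{\Res}\, Z_{\scriptscriptstyle 0}(s)\cdot\widehat{W}\bigl(\tfrac{3}{4}\bigr)\,x^{\scriptscriptstyle 3/4}$. For the residue at $s = 1$, I would combine the Laurent expansion $Z_{\scriptscriptstyle 0}(s) = \sum_{k=1}^{7} a_{\scriptscriptstyle k} (s-1)^{-k} + O(1)$ with the Taylor expansions of $\widehat{W}(s)$ about $s = 1$ and of $x^{s} = x\, e^{(s-1)\log x}$, and extract the coefficient of $(s-1)^{-1}$. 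This yields $x\cdot Q_{\scriptscriptstyle W}(\log x)$, where $Q_{\scriptscriptstyle W}$ is a computable polynomial of degree $7 - 1 = 6$ whose coefficients are determined by the principal part of $Z_{\scriptscriptstyle 0}$ at $s = 1$ and by the derivatives $\widehat{W}^{\scriptscriptstyle(j)}(1)$ for $0 \le j \le 6$.

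It remains to bound the shifted integral on $\Re(s) = \tfrac{2}{3} + \delta$. I would combine two ingredients: integrating by parts three times in the Mellin integral for $\widehat{W}$, using the hypothesis $|W^{\scriptscriptstyle(j)}| \le 1$ for $0 \le j \le 3$ from \eqref{eq: weight-function}, yields $\widehat{W}(\sigma + it) \ll (1 + |t|)^{-3}$ uniformly for $\sigma$ in any compact interval; combined with the convexity-type bound from Theorem~\ref{Main Theorem A}, which at $\Re(s) = \tfrac{2}{3} + \delta$ specializes to $Z_{\scriptscriptstyle 0}(s) \ll |t|^{\scriptscriptstyle 5/3 + \delta}$ for $|t| > 1$, the integrand is of size $x^{\scriptscriptstyle 2/3+\delta}\cdot|t|^{\scriptscriptstyle -4/3 + \delta}$, absolutely integrable in $t$, while the horizontal connectors at $\Im(s) = \pm T$ vanish as $T \to \infty$ by the same decay. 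The resulting contribution is $O_{\delta}(x^{\scriptscriptstyle 2/3 + \delta})$, as required. The only delicate point is that the exponent $-4/3+\delta$ is just barely integrable, so there is essentially no slack in the hypotheses of Theorem~\ref{Main Theorem A} --- three derivatives on $W$ is the minimum one can afford. Since the genuine difficulty is contained in Theorem~\ref{Main Theorem A}, once that is in hand the present theorem follows by this routine Mellin-transform unfolding.
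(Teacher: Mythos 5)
Your proposal is correct and follows the same approach as the paper's proof: Mellin inversion, a contour shift to $\Re(s)=\tfrac{2}{3}+\delta$, residues at $s=1$ (order seven, giving a degree-six polynomial in $\log x$) and $s=\tfrac{3}{4}$, and the error term bounded by combining the decay $\widehat{W}(\sigma+it)\ll (1+|t|)^{-3}$ (from three integrations by parts using \eqref{eq: weight-function}) with the polynomial growth bound $Z_{\scriptscriptstyle 0}(s)\ll |t|^{5(1-\Re(s))+6\delta}$ from Theorem~\ref{Main Theorem A}. The only cosmetic difference is that the paper records the explicit bound $|\widehat{W}(s)| < \tfrac{1}{3+\Re(s)}\cdot\tfrac{1}{|s||s+1||s+2|}$ rather than the asymptotic $\ll (1+|t|)^{-3}$, but these are equivalent for the purpose at hand.
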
 

The polynomial 
$
Q_{\scriptscriptstyle W}(u)
$ 
can be easily computed from the principal part of $Z_{\scriptscriptstyle 0}(s)$ at $s = 1.$ 

\vskip5pt
We note that the restriction to positive fundamental discriminants divisible by $8$ (see also \cite{Sound} 
and \cite{Young}) is solely made for simplicity.

\vskip5pt
{\bf Relation to previous work.} \!Moments in families of $L$-functions are a topic of great interest in analytic number theory because of connections to the generalized Lindel\"{o}f hypothesis, various nonvanishing conjectures, etc. \!The third moment of quadratic $L$-functions has been studied by Soundararajan in \cite{Sound} and by the first author with Goldfeld and Hoffstein in \cite{DGH}. \!The best prior estimate is due to Young in \cite{Young}, who obtains a smoothed asymptotic formula with error $O(x^{\frac{3}{4}+\epsilon}).$ The secondary term of size $x^{\frac{3}{4}}$ was conjectured in \cite{DGH}, and verified by Zhang in \cite{Zha} under certain meromorphicity and polynomial growth assumptions, 
which we shall remove; Alderson and Rubinstein \cite{AR} have also given computational evidence for a secondary term.

The existence of this term raises many interesting questions. \!No analogous term exists in the asymptotics of the first two moments, \!and its existence does not seem to be predicted by random matrix-type models. \!Secondary terms of this type have yet to be fully incorporated into the framework of moment conjectures. \!Yet, \!work of both authors on Kac-Moody multiple Dirichlet series \cite{DV}, \cite{White1}, \cite{White2} predicts that many similar secondary terms will appear in higher moments of quadratic L-functions. \!One problem of interest to the authors concerns is the fourth moment of quadratic Dirichlet L-functions in the rational function field case, summed over monic square-free polynomials, see \cite{F}. Here the underlying group of symmetries is {\it infinite}, and the $p$-part $Z_{\scriptscriptstyle p}(s_{\scriptscriptstyle 1}, s_{\scriptscriptstyle 2}, s_{\scriptscriptstyle 3}, s_{\scriptscriptstyle 4}, s_{\scriptscriptstyle 5})$ is more challenging to understand, but we again expect a secondary term of size $x^{\frac{3}{4}}$ to exist. \!In fact, we expect infinitely many secondary terms of sizes between $x^{\frac{3}{4}}$ and  $x^{\frac{1}{2}}.$

The multiple Dirichlet series approach of \cite{DGH}, \cite{Zha}, and the present work explains the presence of secondary terms as follows. The series $Z_{\scriptscriptstyle 0}(s)$ is a specialization of a four-variable Dirichlet series
\begin{equation*} 
\sideset{}{^*}\sum_{d} L\big(s_{\scriptscriptstyle 1}, \chi_{\scriptscriptstyle 2 d} \big)L\big(s_{\scriptscriptstyle 2}, \chi_{\scriptscriptstyle 2 d} \big)L\big(s_{\scriptscriptstyle 3}, \chi_{\scriptscriptstyle 2 d} \big) d^{\scriptscriptstyle - s_{\scriptscriptstyle 4}}.
\end{equation*}
To take full advantage of symmetry, it is helpful to work with modifications of this object, denoted $Z(s_{\scriptscriptstyle 1}, s_{\scriptscriptstyle 2}, s_{\scriptscriptstyle 3}, s_{\scriptscriptstyle 4})$, where the sum is over all positive integers $d.$ 
When $d$ has square factors, the $L$-functions appearing in the sum are altered at finitely many primes. The resulting object satisfies a group of functional equations isomorphic to the Weyl group of the root system $D_{4}.$ \!These functional equations imply meromorphic continuation to all of $\mathbb{C}^{4}$ via an application of Bochner's principle. Furthermore, they fully determine the polar divisors of $Z(s_{\scriptscriptstyle 1}, s_{\scriptscriptstyle 2}, s_{\scriptscriptstyle 3}, s_{\scriptscriptstyle 4})$, which are in one-to-one correspondence with the positive roots of $D_4$. It follows that $Z(\frac{1}{2}, \frac{1}{2}, \frac{1}{2}, s)$ has a simple pole at $s=\frac{3}{4}.$

We establish that this pole remains when the sum is restricted to fundamental discriminants -- it is not merely an artifact of the modifications in the multiple Dirichlet series construction. The series $Z_{\scriptscriptstyle 0}(s)$ can be obtained from quadratic twists of $Z(\frac{1}{2}, \frac{1}{2}, \frac{1}{2}, s),$ see Section 3. Under the hypothesis that 
$Z_{\scriptscriptstyle 0}(s)$\footnote{More precisely, the corresponding series obtained by summing over all fundamental 
discriminants.} has meromorphic continuation and polynomial growth in a half-plane containing $\frac{3}{4},$ Zhang \cite{Zha} computes the secondary term. Although our calculation of the residue at $s = \\ \frac{3}{4}$ of $Z_{\scriptscriptstyle 0}(s)$ is quite different from Zhang's calculation, \!we reach, essentially, the same answer; \!the only differences\footnote{After all our average is different from that studied in \cite{Zha}.} occur in the constants related to the places $2$ and $\infty.$ In particular, our calculation of the residue confirms the fairly complicated product over odd primes 
found by Zhang.         

{\bf Overview of the argument.} \!In the present work, we omit all discussion of the principal part of 
$Z_{\scriptscriptstyle 0}(s)$ at $s = 1.$ As already noted, this can be computed as in \cite[Section~3.2]{DGH}, 
or as in \cite{Young}. The proof of Theorem \ref{Main Theorem A} proceeds as follows. Section \ref{Properties} defines the multiple Dirichlet series $Z^{(c)}\big(s_{\scriptscriptstyle 1}, s_{\scriptscriptstyle 2}, s_{\scriptscriptstyle 3}, s_{\scriptscriptstyle 4}; \chi_{a_{\scriptscriptstyle 2}c_{\scriptscriptstyle 2}}, \chi_{a_{\scriptscriptstyle 1}c_{\scriptscriptstyle 1}}\big),$ which has roughly the form
\begin{equation*}
Z^{(c)}\big(s_{\scriptscriptstyle 1}, s_{\scriptscriptstyle 2}, s_{\scriptscriptstyle 3}, s_{\scriptscriptstyle 4}; \chi_{a_{\scriptscriptstyle 2}c_{\scriptscriptstyle 2}}, \chi_{a_{\scriptscriptstyle 1}c_{\scriptscriptstyle 1}}\big) \; = \sum_{\substack{d \, > \, 0 \\ \gcd(d, 2c)=1}} L^{(2c)}(s_{\scriptscriptstyle 1}, \chi_{a_{\scriptscriptstyle 1}c_{\scriptscriptstyle 1}}\chi_{\scriptscriptstyle d})L^{(2c)}(s_{\scriptscriptstyle 2}, \chi_{a_{\scriptscriptstyle 1}c_{\scriptscriptstyle 1}}\chi_{\scriptscriptstyle d})L^{(2c)}(s_{\scriptscriptstyle 3}, \chi_{a_{\scriptscriptstyle 1}c_{\scriptscriptstyle 1}}\chi_{\scriptscriptstyle d}) \chi_{a_{\scriptscriptstyle 2}c_{\scriptscriptstyle 2}}(d) d^{-s_{\scriptscriptstyle 4}}
\end{equation*} 
for an odd square-free positive integer $c$ and quadratic characters $\chi_{a_{\scriptscriptstyle 1}c_{\scriptscriptstyle 1}}, \chi_{a_{\scriptscriptstyle 2}c_{\scriptscriptstyle 2}}$ of conductors dividing $8c.$ \!The Euler factors of $L$-functions appearing in this sum are modified at primes $p$ such that $p^{2}\mid d.$ We list the known properties of $Z^{(c)}\big(s_{\scriptscriptstyle 1}, s_{\scriptscriptstyle 2}, s_{\scriptscriptstyle 3}, s_{\scriptscriptstyle 4}; \chi_{a_{\scriptscriptstyle 2}c_{\scriptscriptstyle 2}}, \chi_{a_{\scriptscriptstyle 1}c_{\scriptscriptstyle 1}}\big),$ including meromorphic continuation, functional equations, and a convexity bound for its size at $s_{\scriptscriptstyle 1}=s_{\scriptscriptstyle 2}=s_{\scriptscriptstyle 3}=\frac{1}{2}.$ Finally, we compute the residue of this series at $s_{\scriptscriptstyle 1}=s_{\scriptscriptstyle 2}=s_{\scriptscriptstyle 3}=\frac{1}{2}$, $s_{\scriptscriptstyle 4}=\frac{3}{4}.$

Section \ref{Sieve} uses a simple sieve to express $Z_{\scriptscriptstyle 0}(s)$ in terms of $Z^{(c)}\big(s_{\scriptscriptstyle 1}, s_{\scriptscriptstyle 2}, s_{\scriptscriptstyle 3}, s_{\scriptscriptstyle 4}; \chi_{a_{\scriptscriptstyle 2}c_{\scriptscriptstyle 2}}, \chi_{a_{\scriptscriptstyle 1}c_{\scriptscriptstyle 1}}\big).$ The main problem we face is to establish enough analytic continuation for the sieving formula. The crucial ingredient is an improvement in the bound for $Z^{(c)}\big(s_{\scriptscriptstyle 1}, s_{\scriptscriptstyle 2}, s_{\scriptscriptstyle 3}, s_{\scriptscriptstyle 4}; 
\chi_{a_{\scriptscriptstyle 2}c_{\scriptscriptstyle 2}}, \chi_{a_{\scriptscriptstyle 1}c_{\scriptscriptstyle 1}}\big)$ at $s_{\scriptscriptstyle 1}=s_{\scriptscriptstyle 2}=s_{\scriptscriptstyle 3}=\frac{1}{2},$ with $s_{\scriptscriptstyle 4}=s$ fixed. 
If $c=c_{\scriptscriptstyle 1} c_{\scriptscriptstyle 2} c_{\scriptscriptstyle 3},$ the convexity bound is not enough in $c_{\scriptscriptstyle 3}$ aspect to imply that the sieving formula continues beyond $\Re(s) = \frac{3}{4}.$ However, 
the recursive refinement in Proposition \ref{key-proposition} provides substantial improvement of the exponent of $c_{\scriptscriptstyle 3},$ allowing us to establish the desired analytic continuation.

Section \ref{Proofs} completes the proofs of our main theorems. \!After multiplying by a polynomial to remove the poles at $s=1$ and $s=\frac{3}{4},$ we show that the sieving formula converges absolutely for $\Re(s)>\frac{2}{3}.$ The residue of $Z_{\scriptscriptstyle 0}(s)$ at $s=\frac{3}{4}$ is then computed. Theorem \ref{Main Theorem B} is deduced via a contour integration.

The recursive refinement argument of Proposition \ref{key-proposition} is a new technique which has not appeared elsewhere in the multiple Dirichlet series literature. \!This technique is now available because of the extent to which local factors, \!or $p$-parts, of multiple Dirichlet series are understood. The $p$-part of $Z,$ denoted $Z_{\scriptscriptstyle p}(s_{\scriptscriptstyle 1}, s_{\scriptscriptstyle 2}, s_{\scriptscriptstyle 3}, s_{\scriptscriptstyle 4}),$ is a power series in 
$p^{-s_i}$ which serves as a generating function for the modified Euler factors appearing in the multiple Dirichlet series. There is an extensive literature on such $p$-parts, which are of interest for their connection to local representations of metaplectic groups -- see, for example, \cite{BBF2, CG2, McN2}. In other articles, the authors have proposed an axiomatic characterization of the $p$-parts which is designed to extend to infinite-dimensional groups \cite{DV, White1}. Of central importance is a dominance axiom, which bounds the coefficients of $Z_{\scriptscriptstyle p}(s_{\scriptscriptstyle 1}, s_{\scriptscriptstyle 2}, s_{\scriptscriptstyle 3}, s_{\scriptscriptstyle 4})$ by powers of $p.$ One consequence is that, aside from fixed constant and linear terms, the expressions $Z_{\scriptscriptstyle p}(s_{\scriptscriptstyle 1}, s_{\scriptscriptstyle 2}, s_{\scriptscriptstyle 3}, s_{\scriptscriptstyle 4})$ have decay in $p$ for $\Re(s_i)>\frac{1}{2}.$ \!The recursive refinement method here plays convexity bounds for $Z^{(c)}\big(s_{\scriptscriptstyle 1}, s_{\scriptscriptstyle 2}, s_{\scriptscriptstyle 3}, s_{\scriptscriptstyle 4}; \chi_{a_{\scriptscriptstyle 2}c_{\scriptscriptstyle 2}}, \chi_{a_{\scriptscriptstyle 1}c_{\scriptscriptstyle 1}}\big)$ against explicit bounds for its $p$-parts. \!The axiomatic approach is not strictly necessary, 
because in this case we have 
$Z_{\scriptscriptstyle p}(s_{\scriptscriptstyle 1}, s_{\scriptscriptstyle 2}, s_{\scriptscriptstyle 3}, s_{\scriptscriptstyle 4})$ as an explicit rational function, but it certainly informs the technique.

\vskip10pt
\begin{rem} \!\!The $D_{4}$ Weyl group multiple Dirichlet series 
$
Z^{(c)}\big(s_{\scriptscriptstyle 1}, s_{\scriptscriptstyle 2}, s_{\scriptscriptstyle 3}, s_{\scriptscriptstyle 4}; \chi_{a_{\scriptscriptstyle 2}c_{\scriptscriptstyle 2}}, \chi_{a_{\scriptscriptstyle 1}c_{\scriptscriptstyle 1}}\big)
$ 
can be generalized to arbitrary number fields, see, for instance, \cite{CG1}. They possess similar analytic properties (e.g., 
meromorphic continuation to $\mathbb{C}^{4},$ polynomial growth), and can be used to establish asymptotics 
for the cubic moment of quadratic L-series over number fields. It is worth noticing the presence of the central 
value $\zeta_{\scriptstyle K}(\frac{1}{2})^{7}$ of the Dedekind zeta-function of a number field $K$ in the constant 
of the $x^{\frac{3}{4}}$-term in the corresponding asymptotic formula for the cubic moment over $K.$ This phenomenon 
and its potential relevance remains to be further investigated.    
\end{rem}

\vskip1pt
We designate this article as a sequel to \cite{D} to emphasize the similarity between the function field and number field cases. \!The structure of the rational function field proof given in \cite{D} is parallel to that of the present work. \!We intend the two articles together to serve as a model for transferring multiple Dirichlet series arguments from the geometric to the arithmetic setting.

\section{Properties of Multiple {D}irichlet series} \label{Properties} 
{\bf Definitions.} For $d\in \ZZ$ non-zero and square-free, let $\chi_{d}(m)$ be the quadratic character defined by 
\begin{equation*} 
\chi_{d}(m) = 
\begin{cases} 
\left(\frac{d}{m}\right) & \mbox{if $d \equiv 1 \!\!\!\!\!\pmod 4$} \\
\left(\frac{4d}{m}\right) & \mbox{if $d \equiv 2, \, 3 \!\!\!\!\!\pmod 4.$}
\end{cases}  
\end{equation*} 
Fix an odd, positive, square-free integer $c.$ Let $a_{\scriptscriptstyle 1}, a_{\scriptscriptstyle 2} \in \{\pm 1, \, \pm2\},$ 
and let $c_{\scriptscriptstyle 1}, c_{\scriptscriptstyle 2}$ divide $c.$ We will study the multiple Dirichlet series \begin{equation*} 
Z^{(c)}\big(s_{\scriptscriptstyle 1}, s_{\scriptscriptstyle 2}, s_{\scriptscriptstyle 3}, s_{\scriptscriptstyle 4}; \chi_{a_{\scriptscriptstyle 2}c_{\scriptscriptstyle 2}}, \chi_{a_{\scriptscriptstyle 1}c_{\scriptscriptstyle 1}}\big) \; =\sum_{\substack{m_{\scriptscriptstyle 1}, \, m_{\scriptscriptstyle 2}, \, m_{\scriptscriptstyle 3},
\, d \, \geq \,1 \\ \gcd(m_{\scriptscriptstyle 1}m_{\scriptscriptstyle 2}m_{\scriptscriptstyle 3}d, 2c)=1}} 
H(m_{\scriptscriptstyle 1}, m_{\scriptscriptstyle 2}, m_{\scriptscriptstyle 3}, d)\chi_{a_{\scriptscriptstyle 1}c_{\scriptscriptstyle 1}}(m_{\scriptscriptstyle 1}m_{\scriptscriptstyle 2}m_{\scriptscriptstyle 3}) 
\chi_{a_{\scriptscriptstyle 2}c_{\scriptscriptstyle 2}}(d) m_{\scriptscriptstyle 1}^{-s_{\scriptscriptstyle 1}}
m_{\scriptscriptstyle 2}^{-s_{\scriptscriptstyle 2}}m_{\scriptscriptstyle 3}^{-s_{\scriptscriptstyle 3}}
d^{-s_{\scriptscriptstyle 4}}.  
\end{equation*} 
The function $H(m_{\scriptscriptstyle 1}, m_{\scriptscriptstyle 2}, m_{\scriptscriptstyle 3}, d)$ on quadruples of odd 
integers is defined as follows. \!First, $H$ satisfies a {\it twisted} multiplicativity property. For 
$\gcd(m_{\scriptscriptstyle 1}^{} m_{\scriptscriptstyle 2}^{} m_{\scriptscriptstyle 3}^{} d, 
m_{\scriptscriptstyle 1}' m_{\scriptscriptstyle 2}' m_{\scriptscriptstyle 3}' d') = 1,$ we have\footnote{Note that in the conventions of \cite{CG1}, which are largely followed here, the Kronecker symbols would be flipped. \!However, this convention is more convenient for working with the family $L(s, \chi_d).$}: 
\begin{equation*}
H(m_{\scriptscriptstyle 1}^{} m_{\scriptscriptstyle 1}', m_{\scriptscriptstyle 2}^{} m_{\scriptscriptstyle 2}', 
m_{\scriptscriptstyle 3}^{} m_{\scriptscriptstyle 3}', dd') = H(m_{\scriptscriptstyle 1}^{}, m_{\scriptscriptstyle 2}^{}, m_{\scriptscriptstyle 3}^{}, d) H(m_{\scriptscriptstyle 1}', m_{\scriptscriptstyle 2}', m_{\scriptscriptstyle 3}', d')
\left(\frac{d}{m_{\scriptscriptstyle 1}' m_{\scriptscriptstyle 2}' m_{\scriptscriptstyle 3}'}\right)
\left(\frac{d'}{m_{\scriptscriptstyle 1}^{} m_{\scriptscriptstyle 2}^{} m_{\scriptscriptstyle 3}^{}}\right).
\end{equation*}
Given this property, it suffices to define $H(p^{k_{\scriptscriptstyle 1}}, p^{k_{\scriptscriptstyle 2}}, 
p^{k_{\scriptscriptstyle 3}}, p^{l})$ for $p$ prime. \!These coefficients are given by an explicit generating function
\begin{equation*} 
Z_{\scriptscriptstyle p}(s_{\scriptscriptstyle 1}, s_{\scriptscriptstyle 2}, s_{\scriptscriptstyle 3}, s_{\scriptscriptstyle 4}) \; =\sum_{k_{\scriptscriptstyle 1}, \, k_{\scriptscriptstyle 2}, \, k_{\scriptscriptstyle 3}, \, l \, \geq \, 0} \, 
H(p^{k_{\scriptscriptstyle 1}}, p^{k_{\scriptscriptstyle 2}}, p^{k_{\scriptscriptstyle 3}}, p^{l}) p^{ - k_{\scriptscriptstyle 1}s_{\scriptscriptstyle 1} - k_{\scriptscriptstyle 2}s_{\scriptscriptstyle 2} - k_{\scriptscriptstyle 3}s_{\scriptscriptstyle 3} -ls_{\scriptscriptstyle 4}}
\end{equation*}
known as the $p$-part of the series. More precisely, 
\begin{equation*}
Z_{\scriptscriptstyle p}(s_{\scriptscriptstyle 1}, s_{\scriptscriptstyle 2}, s_{\scriptscriptstyle 3}, s_{\scriptscriptstyle 4}) 
= f\big(p^{- s_{\scriptscriptstyle 1}}\!, \, p^{- s_{\scriptscriptstyle 2}}\!, \, p^{- s_{\scriptscriptstyle 3}}\!, \,
p^{- s_{\scriptscriptstyle 4}}; p\big)
\end{equation*} 
where 
$ 
f(z_{\scriptscriptstyle 1}^{}\!, \, z_{\scriptscriptstyle 2}^{}, \, z_{\scriptscriptstyle 3}^{}, \, z_{\scriptscriptstyle 4}^{}; p) 
$ 
is the rational function given in \cite[Appendix B, Equation 32]{D}\footnote{Note that \cite{D} uses the notation $A(p^{k_{\scriptscriptstyle 1}}\!, p^{k_{\scriptscriptstyle 2}}\!, p^{k_{\scriptscriptstyle 3}}\!, p^{l})$ or 
$a(k_{\scriptscriptstyle 1}, k_{\scriptscriptstyle 2}, k_{\scriptscriptstyle 3}, l; p)$ for $H(p^{k_{\scriptscriptstyle 1}}\!, p^{k_{\scriptscriptstyle 2}}\!, p^{k_{\scriptscriptstyle 3}}\!, p^{l}).$}\!\!.

From the generating series $Z_{\scriptscriptstyle p},$ we have that $H(p^{k_{\scriptscriptstyle 1}}\!, p^{k_{\scriptscriptstyle 2}}\!, p^{k_{\scriptscriptstyle 3}}\!, p^{l})=1$ when $\min(k_{\scriptscriptstyle 1}+k_{\scriptscriptstyle 2}+k_{\scriptscriptstyle 3}, l)=0$ and $H(p^{k_{\scriptscriptstyle 1}}\!, p^{k_{\scriptscriptstyle 2}}\!, p^{k_{\scriptscriptstyle 3}}\!, p^{l})\\ =0$ when 
$\min(k_{\scriptscriptstyle 1}+k_{\scriptscriptstyle 2}+k_{\scriptscriptstyle 3}, l)=1.$ Therefore, 
$H(m_{\scriptscriptstyle 1}, m_{\scriptscriptstyle 2}, m_{\scriptscriptstyle 3}, d) = 
\big(\frac{d}{m_{\scriptscriptstyle 1}m_{\scriptscriptstyle 2}m_{\scriptscriptstyle 3}}\big)$ whenever either $d$ or $m_{\scriptscriptstyle 1}m_{\scriptscriptstyle 2}m_{\scriptscriptstyle 3}$ is square-free. Furthermore, $H(p^{k_{\scriptscriptstyle 1}}\!, p^{k_{\scriptscriptstyle 2}}\!, p^{k_{\scriptscriptstyle 3}}\!, p^{l})=0$ when $k_{\scriptscriptstyle 1}+k_{\scriptscriptstyle 2}+k_{\scriptscriptstyle 3}$ and $l$ are both odd. We can compare 
$Z^{(c)}(s_{\scriptscriptstyle 1}, s_{\scriptscriptstyle 2}, s_{\scriptscriptstyle 3}, s_{\scriptscriptstyle 4}; \chi_{a_{\scriptscriptstyle 2}c_{\scriptscriptstyle 2}}, \chi_{a_{\scriptscriptstyle 1}c_{\scriptscriptstyle 1}})$ 
to an Euler product of $Z_{\scriptscriptstyle p}$ factors to show that it converges absolutely for 
$\Re(s_{\scriptscriptstyle 1}), \ldots, \Re(s_{\scriptscriptstyle 4})\, > \, 1.$

We will rewrite the function 
$
Z^{(c)}\big(s_{\scriptscriptstyle 1}, s_{\scriptscriptstyle 2}, s_{\scriptscriptstyle 3}, s_{\scriptscriptstyle 4}; \chi_{a_{\scriptscriptstyle 2}c_{\scriptscriptstyle 2}}, \chi_{a_{\scriptscriptstyle 1}c_{\scriptscriptstyle 1}}\big)
$ 
in two different ways which allow us to verify its meromorphic continuation to $\CC^{4}$ and the group of functional equations. \!Fix a positive integer $d,$ coprime to $2c,$ which factors as 
$d_{\scriptscriptstyle 0}^{}d_{\scriptscriptstyle 1}^{2}$ with $d_{\scriptscriptstyle 0}^{}$ square-free. \!Then we have 
\begin{align*}
& \sum_{\substack{m_{\scriptscriptstyle 1}, \, m_{\scriptscriptstyle 2}, \, m_{\scriptscriptstyle 3} \, \geq 1\,  \\ 
\gcd(m_{\scriptscriptstyle 1}m_{\scriptscriptstyle 2}m_{\scriptscriptstyle 3}, 2c) = 1}} 
H(m_{\scriptscriptstyle 1}, m_{\scriptscriptstyle 2}, m_{\scriptscriptstyle 3}, d) \chi_{a_{\scriptscriptstyle 1}c_{\scriptscriptstyle 1}}(m_{\scriptscriptstyle 1}m_{\scriptscriptstyle 2}m_{\scriptscriptstyle 3}) 
m_{\scriptscriptstyle 1}^{- s_{\scriptscriptstyle 1}} 
m_{\scriptscriptstyle 2}^{- s_{\scriptscriptstyle 2}} m_{\scriptscriptstyle 3}^{- s_{\scriptscriptstyle 3}} \\
& = L^{(2c)}(s_{\scriptscriptstyle 1}, \chi_{a_{\scriptscriptstyle 1}c_{\scriptscriptstyle 1}d_{\scriptscriptstyle 0}})
L^{(2c)}(s_{\scriptscriptstyle 2}, \chi_{a_{\scriptscriptstyle 1}c_{\scriptscriptstyle 1}d_{\scriptscriptstyle 0}})
L^{(2c)}(s_{\scriptscriptstyle 3}, \chi_{a_{\scriptscriptstyle 1}c_{\scriptscriptstyle 1}d_{\scriptscriptstyle 0}}) \\ 
&\cdot \prod_{\substack{p^{l} \parallel d \\ l \, \geq \, 2}} \frac{\sum_{k_{\scriptscriptstyle 1}, \, k_{\scriptscriptstyle 2}, \, k_{\scriptscriptstyle 3} \, \geq \, 0}\, H(p^{k_{\scriptscriptstyle 1}}\!, p^{k_{\scriptscriptstyle 2}}\!, 
p^{k_{\scriptscriptstyle 3}}\!, p^{l}) \chi_{a_{\scriptscriptstyle 1}c_{\scriptscriptstyle 1}}(p)^{k_{\scriptscriptstyle 1} + k_{\scriptscriptstyle 2} + k_{\scriptscriptstyle 3}} \Big(\frac{d p^{-l}}{p}\Big)^{k_{\scriptscriptstyle 1} 
+ k_{\scriptscriptstyle 2} + k_{\scriptscriptstyle 3}} p^{- k_{\scriptscriptstyle 1} s_{\scriptscriptstyle 1} - k_{\scriptscriptstyle 2} s_{\scriptscriptstyle 2} - k_{\scriptscriptstyle 3} s_{\scriptscriptstyle 3}}}
{L_{\scriptscriptstyle p}(s_{\scriptscriptstyle 1}, \chi_{a_{\scriptscriptstyle 1}c_{\scriptscriptstyle 1}d_{\scriptscriptstyle 0}})
L_{\scriptscriptstyle p}(s_{\scriptscriptstyle 2}, \chi_{a_{\scriptscriptstyle 1}c_{\scriptscriptstyle 1}d_{\scriptscriptstyle 0}})
L_{\scriptscriptstyle p}(s_{\scriptscriptstyle 3}, \chi_{a_{\scriptscriptstyle 1}c_{\scriptscriptstyle 1}d_{\scriptscriptstyle 0}})}.
\end{align*}
Here $L^{(2c)}(s_{i}, \chi_{a_{\scriptscriptstyle 1}c_{\scriptscriptstyle 1}d_{\scriptscriptstyle 0}})$ denotes the quadratic Dirichlet $L$-function with Euler factors at primes $p$ dividing $2c$ removed; $L_{\scriptscriptstyle p}(s_{i}, \chi_{a_{\scriptscriptstyle 1}c_{\scriptscriptstyle 1}d_{\scriptscriptstyle 0}})$ denotes the Euler factor at $p.$ The latter product is a Dirichlet polynomial we denote as 
$
P_{\scriptscriptstyle d}(s_{\scriptscriptstyle 1}, s_{\scriptscriptstyle 2}, s_{\scriptscriptstyle 3}; 
\chi_{a_{\scriptscriptstyle 1} c_{\scriptscriptstyle 1} d_{\scriptscriptstyle 0}}).
$ 

The local coefficients $H$ are so that this modified product of $L$-functions satisfies uniform functional equations. 
\!For $i = 1, 2, 3,$ and for $a=0$ if 
$\chi_{a_{\scriptscriptstyle 1}c_{\scriptscriptstyle 1}}(-1)=1,$ $a=1$ if $\chi_{a_{\scriptscriptstyle 1}c_{\scriptscriptstyle 1}}(-1) = -1,$ the function
\begin{equation*}
\left(\tfrac{\pi}{d_{\scriptscriptstyle 1}^{2}\,\cond\, \chi_{a_{\scriptscriptstyle 1}c_{\scriptscriptstyle 1}d_{\scriptscriptstyle 0}}}\right)^{\!\! - \frac{s_i + a}{2}}
\Gamma\!\left(\tfrac{s_i + a}{2}\right) 
L^{(2c)}(s_i, \chi_{a_{\scriptscriptstyle 1}c_{\scriptscriptstyle 1}d_{\scriptscriptstyle 0}})
\Bigg(\prod_{p \mid 2c} L_{\scriptscriptstyle p}(s_i, \chi_{a_{\scriptscriptstyle 1}c_{\scriptscriptstyle 1}d_{\scriptscriptstyle 0}})\Bigg) 
P_{\scriptscriptstyle d}(s_{\scriptscriptstyle 1}, s_{\scriptscriptstyle 2}, s_{\scriptscriptstyle 3}; 
\chi_{a_{\scriptscriptstyle 1} c_{\scriptscriptstyle 1} d_{\scriptscriptstyle 0}})
\end{equation*}
is symmetric under $s_i \mapsto 1 - s_i,$ for all $d.$

In the domain of absolute convergence, we have 
\begin{equation} \label{MDS-vers1}
\begin{split}
&Z^{(c)}\big(s_{\scriptscriptstyle 1}, s_{\scriptscriptstyle 2}, s_{\scriptscriptstyle 3}, s_{\scriptscriptstyle 4}; 
\chi_{a_{\scriptscriptstyle 2}c_{\scriptscriptstyle 2}}, \chi_{a_{\scriptscriptstyle 1}c_{\scriptscriptstyle 1}}\big)  \\
& = \sum_{\substack{d \, \geq \, 1 \\ \gcd(d, 2c) = 1}} \chi_{a_{\scriptscriptstyle 2}c_{\scriptscriptstyle 2}}(d) L^{(2c)}(s_{\scriptscriptstyle 1}, \chi_{a_{\scriptscriptstyle 1}c_{\scriptscriptstyle 1}d_{\scriptscriptstyle 0}})
L^{(2c)}(s_{\scriptscriptstyle 2}, \chi_{a_{\scriptscriptstyle 1}c_{\scriptscriptstyle 1}d_{\scriptscriptstyle 0}})
L^{(2c)}(s_{\scriptscriptstyle 3}, \chi_{a_{\scriptscriptstyle 1}c_{\scriptscriptstyle 1}d_{\scriptscriptstyle 0}})
P_{\scriptscriptstyle d}(s_{\scriptscriptstyle 1}, s_{\scriptscriptstyle 2}, s_{\scriptscriptstyle 3}; 
\chi_{a_{\scriptscriptstyle 1} c_{\scriptscriptstyle 1} d_{\scriptscriptstyle 0}}) d^{- s_{\scriptscriptstyle 4}}
\end{split}
\end{equation} 
where each $d$ is factored into $d_{\scriptscriptstyle 0}^{}d_{\scriptscriptstyle 1}^{2},$ as above. 
\!The polynomials $P_{\scriptscriptstyle d}(s_{\scriptscriptstyle 1}, s_{\scriptscriptstyle 2}, s_{\scriptscriptstyle 3}; 
\chi_{a_{\scriptscriptstyle 1} c_{\scriptscriptstyle 1} d_{\scriptscriptstyle 0}})$ have polynomial growth in $d_{\scriptscriptstyle 1}.$ 
\!It follows that the sum converges absolutely for every $s_{\scriptscriptstyle 1}, s_{\scriptscriptstyle 2}, s_{\scriptscriptstyle 3} \neq 1$ 
as long as $\Re(s_{\scriptscriptstyle 4})$ is sufficiently large.

The second expression for $Z^{(c)}\big(s_{\scriptscriptstyle 1}, s_{\scriptscriptstyle 2}, s_{\scriptscriptstyle 3}, s_{\scriptscriptstyle 4}; \chi_{a_{\scriptscriptstyle 2}c_{\scriptscriptstyle 2}}, \chi_{a_{\scriptscriptstyle 1}c_{\scriptscriptstyle 1}}\big)$ evaluates the $d$ and 
$m_{\scriptscriptstyle 1}, m_{\scriptscriptstyle 2}, m_{\scriptscriptstyle 3}$ sums in the opposite order. We will use the notation 
$\tilde{\chi}_{m}(d)$ for the character defined by the Kronecker symbol $\left(\frac{d}{m}\right).$ Fix positive integers 
$m_{\scriptscriptstyle 1}, m_{\scriptscriptstyle 2}, m_{\scriptscriptstyle 3}$ coprime to $2c,$ and let $m_{0}$ 
denote the square-free part of the product $m_{\scriptscriptstyle 1}m_{\scriptscriptstyle 2}m_{\scriptscriptstyle 3}.$ We have: 
\begin{align*}
& \sum_{\substack{d \, \geq \, 1 \\ \gcd(d, 2c) = 1}} 
H(m_{\scriptscriptstyle 1}, m_{\scriptscriptstyle 2}, m_{\scriptscriptstyle 3}, d) 
\chi_{a_{\scriptscriptstyle 2}c_{\scriptscriptstyle 2}}(d)d^{- s_{\scriptscriptstyle 4}} \\
& = L^{(2c)}(s_{\scriptscriptstyle 4}, \chi_{a_{\scriptscriptstyle 2}c_{\scriptscriptstyle 2}}\tilde{\chi}_{m_0}) 
\prod_{\substack{p^{k_{\scriptscriptstyle 1}}\parallel m_{\scriptscriptstyle 1}, \, 
p^{k_{\scriptscriptstyle 2}}\parallel m_{\scriptscriptstyle 2}, \, 
p^{k_{\scriptscriptstyle 3}}\parallel m_{\scriptscriptstyle 3}
\\ k_{\scriptscriptstyle 1} + k_{\scriptscriptstyle 2} + k_{\scriptscriptstyle 3} \, \geq \, 2}} 
\frac{\sum_{l \geq 0}\, H(p^{k_{\scriptscriptstyle 1}}, p^{k_{\scriptscriptstyle 2}}, p^{k_{\scriptscriptstyle 3}}, p^{l}) 
\chi_{a_{\scriptscriptstyle 2}c_{\scriptscriptstyle 2}}(p)^{l} 
\left(\frac{p}{m_{\scriptscriptstyle 1}m_{\scriptscriptstyle 2}m_{\scriptscriptstyle 3}p^{- k_{\scriptscriptstyle 1} - k_{\scriptscriptstyle 2} - 
k_{\scriptscriptstyle 3}}}\right)^{\! l} p^{- ls_{\scriptscriptstyle 4}}}
{L_{\scriptscriptstyle p}(s_{\scriptscriptstyle 4}, \chi_{a_{\scriptscriptstyle 2}c_{\scriptscriptstyle 2}}\tilde{\chi}_{m_{0}})}.
\end{align*} 
The latter product is a Dirichlet polynomial denoted 
$
Q_{m_{\scriptscriptstyle 1},\,  m_{\scriptscriptstyle 2}, \, m_{\scriptscriptstyle 3}}(s_{\scriptscriptstyle 4}; 
\chi_{a_{\scriptscriptstyle 2}c_{\scriptscriptstyle 2}}\tilde{\chi}_{m_{0}}).
$ 
This modified $L$-function also satisfies a uniform functional equation. Let $a = 0$ if 
$\chi_{a_{\scriptscriptstyle 2}c_{\scriptscriptstyle 2}}(-1)\tilde{\chi}_{m_{0}}(-1) = 1,$ and $a = 1$ if 
$\chi_{a_{\scriptscriptstyle 2}c_{\scriptscriptstyle 2}}(-1)\tilde{\chi}_{m_{0}}(-1) = -1.$ Then the function 
\begin{equation*}
\left(\tfrac{\pi}{m_{\scriptscriptstyle 1}m_{\scriptscriptstyle 2}m_{\scriptscriptstyle 3} \, 
\cond \, \chi_{a_{\scriptscriptstyle 2}c_{\scriptscriptstyle 2}}}\right)^{\!\! - \frac{s_{\scriptscriptstyle 4} + a}{2}}
\Gamma\!\left(\tfrac{s_{\scriptscriptstyle 4} + a}{2}\right) 
L^{(2c)}(s_{\scriptscriptstyle 4}, \chi_{a_{\scriptscriptstyle 2}c_{\scriptscriptstyle 2}}\tilde{\chi}_{m_{0}})
\Bigg(\prod_{p\mid 2c} L_{\scriptscriptstyle p}(s_{\scriptscriptstyle 4}, \chi_{a_{\scriptscriptstyle 2}c_{\scriptscriptstyle 2}}\tilde{\chi}_{m_{0}}) \Bigg) 
Q_{m_{\scriptscriptstyle 1}, \, m_{\scriptscriptstyle 2}, \, m_{\scriptscriptstyle 3}}(s_{\scriptscriptstyle 4}; 
\chi_{a_{\scriptscriptstyle 2}c_{\scriptscriptstyle 2}}\tilde{\chi}_{m_{0}})
\end{equation*} 
is symmetric under $s_{\scriptscriptstyle 4} \mapsto 1 - s_{\scriptscriptstyle 4},$ even when 
$m_{\scriptscriptstyle 1}m_{\scriptscriptstyle 2}m_{\scriptscriptstyle 3}$ is {\it not} square-free.

Thus we can write 
\begin{equation} \label{MDS-vers2}
Z^{(c)}\big(s_{\scriptscriptstyle 1}, s_{\scriptscriptstyle 2}, s_{\scriptscriptstyle 3}, s_{\scriptscriptstyle 4}; 
\chi_{a_{\scriptscriptstyle 2}c_{\scriptscriptstyle 2}}, \chi_{a_{\scriptscriptstyle 1}c_{\scriptscriptstyle 1}}\big) \;  
= \sum_{\substack{m_{\scriptscriptstyle 1},\, m_{\scriptscriptstyle 2},\, m_{\scriptscriptstyle 3} \, \geq \, 1 \\ 
\gcd(m_{\scriptscriptstyle 1}m_{\scriptscriptstyle 2}m_{\scriptscriptstyle 3}, 2c) = 1}} 
\frac{\chi_{a_{\scriptscriptstyle 1}c_{\scriptscriptstyle 1}}(m_{\scriptscriptstyle 1}m_{\scriptscriptstyle 2}m_{\scriptscriptstyle 3}) 
L^{(2c)}(s_{\scriptscriptstyle 4}, \chi_{a_{\scriptscriptstyle 2}c_{\scriptscriptstyle 2}}\tilde{\chi}_{m_{0}})
Q_{\scriptscriptstyle m_{\scriptscriptstyle 1},\, m_{\scriptscriptstyle 2}, \, m_{\scriptscriptstyle 3}}(s_{\scriptscriptstyle 4}; 
\chi_{a_{\scriptscriptstyle 2}c_{\scriptscriptstyle 2}}\tilde{\chi}_{m_{0}})}
{m_{\scriptscriptstyle 1}^{s_{\scriptscriptstyle 1}} 
m_{\scriptscriptstyle 2}^{s_{\scriptscriptstyle 2}}
m_{\scriptscriptstyle 3}^{s_{\scriptscriptstyle 3}}}.
\end{equation} 
As before, for any $s_{\scriptscriptstyle 4} \neq 1,$ the sum converges absolutely for $\Re(s_i)$ ($i = 1, 2, 3$) sufficiently large.

\vskip5pt
{\bf Functional equations and analytic continuation.} As shown in \cite{DGH}, the family of multiple Dirichlet series defined at the beginning of this section 
satisfies a group of functional equations. However, for the computation of the residue we are interested in, it is more convenient to write the functional 
equations as follows.   

For an arbitrary (primitive) quadratic Dirichlet character $\chi,$ let 
\begin{equation*}
\Lambda_{c}(s; \chi) = \left(\tfrac{\pi}{\gcd(\cond \chi, 8c)}\right)^{\!\! - \frac{s + a}{2}}\Gamma\!\left(\tfrac{s + a}{2}\right) 
\prod_{p \mid 2c} L_{\scriptscriptstyle p}(s, \chi) 
\end{equation*} 
where $a$ is $0$ if $\chi(-1) = 1$ and $1$ if $\chi(-1) = - 1.$

Let $\omega(c)$ denote the number of distinct prime factors of $c.$ For $D \in \ZZ$ coprime to $2c,$ the linear combination 
\begin{equation}  \label{lin-comb-modsq8c}
2^{- \omega(c) - 2}\sum_{\substack{a_{\scriptscriptstyle 2} \in \{\pm 1, \, \pm 2 \} \\ 
c_{\scriptscriptstyle 2}\mid c}} \chi_{a_{\scriptscriptstyle 2}c_{\scriptscriptstyle 2}}(D) 
Z^{(c)}\big(s_{\scriptscriptstyle 1}, s_{\scriptscriptstyle 2}, s_{\scriptscriptstyle 3}, s_{\scriptscriptstyle 4}; 
\chi_{a_{\scriptscriptstyle 2}c_{\scriptscriptstyle 2}}, \chi_{a_{\scriptscriptstyle 1}c_{\scriptscriptstyle 1}}\big) 
\end{equation} 
isolates the summands of $d$ in 
$Z^{(c)}\big(s_{\scriptscriptstyle 1}, s_{\scriptscriptstyle 2}, s_{\scriptscriptstyle 3}, s_{\scriptscriptstyle 4}; \chi_{a_{\scriptscriptstyle 2}c_{\scriptscriptstyle 2}}, \chi_{a_{\scriptscriptstyle 1}c_{\scriptscriptstyle 1}}\big)$ with $dD$ congruent to a square modulo $8c.$ 
\!We take $D$ square-free and ranging over a complete set of representatives for 
$
\frac{(\mathbb{Z}\slash 8c\mathbb{Z})^{*}}{(\mathbb{Z}\slash 8c\mathbb{Z})^{*2}}.
$ 
If the expression \eqref{lin-comb-modsq8c} is written in the form of \eqref{MDS-vers1}, it can be seen to satisfy a functional equation: 
\!for $i = 1, 2, 3,$ the function 
\begin{equation} \label{fe-1}
\Lambda_{c}(s_i; \chi_{a_{\scriptscriptstyle 1}c_{\scriptscriptstyle 1}D}) 2^{- \omega(c) - 2}
\sum_{\substack{a_{\scriptscriptstyle 2} \in \{\pm 1, \, \pm 2 \} \\ c_{\scriptscriptstyle 2}\mid c}} 
\chi_{a_{\scriptscriptstyle 2}c_{\scriptscriptstyle 2}}(D) Z^{(c)}\big(s_{\scriptscriptstyle 1}, s_{\scriptscriptstyle 2}, s_{\scriptscriptstyle 3}, s_{\scriptscriptstyle 4}; \chi_{a_{\scriptscriptstyle 2}c_{\scriptscriptstyle 2}}, \chi_{a_{\scriptscriptstyle 1}c_{\scriptscriptstyle 1}}\big) 
\end{equation}
is symmetric under the transformation $\sigma_i$ which takes $s_i$ to $1 - s_{i},$ $s_{\scriptscriptstyle 4}$ to $s_{\scriptscriptstyle 4} + s_i - \frac{1}{2},$ 
and fixes the other variables.

Similarly, we may use the expression \eqref{MDS-vers2} to deduce an additional functional equation. For $M\in \ZZ$ coprime to $2c,$ the function 
\begin{equation} \label{fe-2}
\Lambda_{c}(s_{\scriptscriptstyle 4}; \chi_{a_{\scriptscriptstyle 2}c_{\scriptscriptstyle 2}}\tilde{\chi}_M) 2^{- \omega(c) - 2}
\sum_{\substack{a_{\scriptscriptstyle 1} \in \{\pm 1, \, \pm 2 \} \\ c_{\scriptscriptstyle 1}\mid c}} 
\chi_{a_{\scriptscriptstyle 1}c_{\scriptscriptstyle 1}}(M) Z^{(c)}\big(s_{\scriptscriptstyle 1}, s_{\scriptscriptstyle 2}, s_{\scriptscriptstyle 3}, s_{\scriptscriptstyle 4}; \chi_{a_{\scriptscriptstyle 2}c_{\scriptscriptstyle 2}}, \chi_{a_{\scriptscriptstyle 1}c_{\scriptscriptstyle 1}}\big) 
\end{equation}
is symmetric under the transformation $\sigma_4(s_{\scriptscriptstyle 1}, s_{\scriptscriptstyle 2}, s_{\scriptscriptstyle 3}, s_{\scriptscriptstyle 4}) 
= \big(s_{\scriptscriptstyle 1} + s_{\scriptscriptstyle 4} - \frac{1}{2}, s_{\scriptscriptstyle 2} + s_{\scriptscriptstyle 4} - \frac{1}{2}, 
s_{\scriptscriptstyle 3} + s_{\scriptscriptstyle 4} - \frac{1}{2}, 1 - s_{\scriptscriptstyle 4}\big).$ 

These symmetries may be considered as vector functional equations for the collection of all $Z^{(c)}\big(s_{\scriptscriptstyle 1}, s_{\scriptscriptstyle 2}, s_{\scriptscriptstyle 3}, s_{\scriptscriptstyle 4}; \chi_{a_{\scriptscriptstyle 2}c_{\scriptscriptstyle 2}}, \chi_{a_{\scriptscriptstyle 1}c_{\scriptscriptstyle 1}}\big)$ 
when $c$ is fixed but $a_{\scriptscriptstyle 1}, a_{\scriptscriptstyle 2}, c_{\scriptscriptstyle 1}, c_{\scriptscriptstyle 2}$ are allowed to vary. \!The underlying transformations $\sigma_i$ generate a symmetry group isomorphic to the Weyl group of root system $D_4.$ Applying these symmetries to the initial region of meromorphicity for $Z^{(c)}\big(s_{\scriptscriptstyle 1}, s_{\scriptscriptstyle 2}, s_{\scriptscriptstyle 3}, s_{\scriptscriptstyle 4}; \chi_{a_{\scriptscriptstyle 2}c_{\scriptscriptstyle 2}}, \chi_{a_{\scriptscriptstyle 1}c_{\scriptscriptstyle 1}}\big)$ produces a collection of overlapping regions, the complement of a 
bounded set in $\CC^{4}.$ Bochner's principle \cite{Boh} then yields meromorphic continuation to all of $\CC^{4};$ \!this argument is carried out in detail in \cite{DGH}.

We remark that it actually suffices to work with smaller sums than those appearing in equations \eqref{fe-1} and \eqref{fe-2}. \!Since 
$\Lambda_{c}(s_i; \chi_{a_{\scriptscriptstyle 1}c_{\scriptscriptstyle 1}D})$ does not contain Euler factors at primes $p$ dividing $c_{\scriptscriptstyle 1},$ 
it suffices to sum over $c_{\scriptscriptstyle 2}$ dividing $c\slash c_{\scriptscriptstyle 1}$ in \eqref{fe-1}; this isolates summands of \eqref{MDS-vers1} 
with the same functional equations. \!Similarly, it suffices to sum over $c_{\scriptscriptstyle 1}$ dividing $c\slash c_{\scriptscriptstyle 2}$ in \eqref{fe-2}. 
In this way one may always work with multiple Dirichlet series $Z^{(c)}\big(s_{\scriptscriptstyle 1}, s_{\scriptscriptstyle 2}, 
s_{\scriptscriptstyle 3}, s_{\scriptscriptstyle 4}; \chi_{a_{\scriptscriptstyle 2}c_{\scriptscriptstyle 2}}, \chi_{a_{\scriptscriptstyle 1}c_{\scriptscriptstyle 1}}\big)$ 
for which $c_{1},$ $c_{2}$ are relatively prime. \!This is the convention of \cite{DGH} and is used in the proof of Proposition \ref{convexity-bound} 
(their Proposition 4.12). However, we find it convenient to work with the larger sums in computing the residue in Proposition \ref{residue} below.

\vskip5pt
{\bf Convexity bound.} \!The function 
$ 
Z^{(c)}\big(\tfrac{1}{2}, \tfrac{1}{2}, \tfrac{1}{2}, s; \chi_{a_{\scriptscriptstyle 2}c_{\scriptscriptstyle 2}}, \chi_{a_{\scriptscriptstyle 1}c_{\scriptscriptstyle 1}}\big) 
$ 
also satisfies a convexity bound, \!which we shall recall briefly. \!For details, we
refer to \cite[Proposition~4.12]{DGH}.

\vskip10pt
\begin{prop} \label{convexity-bound}
Suppose that $c=c_{\scriptscriptstyle 1}c_{\scriptscriptstyle 2}c_{\scriptscriptstyle 3}$ is square-free. \!Then for every $\delta > 0$ and 
$a_{\scriptscriptstyle 1}, a_{\scriptscriptstyle 2} \in$ $\{\pm 1, \pm 2\},$ we have the estimate
\begin{equation} \label{eq: basic-initial-estimate} 
\frac{(s - 1)^{\scriptscriptstyle 7} \big(s - \frac{3}{4}\big)}{(s + 1)^{\scriptscriptstyle 8}} \cdot
Z^{(c)}\big(\tfrac{1}{2}, \tfrac{1}{2}, \tfrac{1}{2}, s; 
\chi_{a_{\scriptscriptstyle 2}c_{\scriptscriptstyle 2}}, 
\chi_{a_{\scriptscriptstyle 1}c_{\scriptscriptstyle 1}}\big) \, \ll_{\scriptscriptstyle \delta} \, 
(1+ |s|)^{\scriptscriptstyle 5(1 - \Re(s)) \, + \, \delta} 
A_{0}^{\omega(c)} S(c, \delta)
(c_{\scriptscriptstyle 1} c_{\scriptscriptstyle 3})^{\scriptscriptstyle 3 (1 - \Re(s))} 
\, c_{\scriptscriptstyle 2}^{\scriptscriptstyle \frac{5}{2}(1 - \Re(s))}
c^{\scriptscriptstyle \delta}
\end{equation} 
for all $s$ with $0 \le \Re(s) \le 1.$ Here $A_{0}$ is some computable positive constant, and  
\begin{equation*} 
S(c, \delta) \; = 
\sum_{a \, = \, \pm 1, \, \pm 2}\;\, \sum_{b \, \mid \, c} 
\; \sum_{(d_{\scriptscriptstyle 0}, \, 2) \, = \, 1} 
\, \big|L^{\scriptscriptstyle (2)}\big(\tfrac{1}{2},
\chi_{a b d_{\scriptscriptstyle 0}} \big)\big|^{3} d_{\scriptscriptstyle 0}^{\scriptscriptstyle - 1 - (\delta \slash 30)}.
\end{equation*} 
\end{prop}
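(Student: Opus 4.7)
The bound is of convexity type, so I would use Phragmén--Lindelöf interpolation in the single complex variable $s$. The polynomial prefactor $(s-1)^{7}(s-\tfrac34)/(s+1)^{8}$ on the left is designed to cancel the known poles of $Z^{(c)}$ along the hyperplane $s_1=s_2=s_3=\tfrac12$---the order-seven pole at $s=1$ and the simple $D_4$--pole at $s=\tfrac34$---and has degree zero at infinity, so after multiplication we obtain a function holomorphic in some strip $-\delta'\le\Re(s)\le 1+\delta'$, and it suffices to bound it on the two vertical edges.

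\textbf{Right edge, $\Re(s)=1+\delta'$.} I would start from expression \eqref{MDS-vers1}, which is absolutely convergent there. Writing $d=d_0d_1^{2}$ with $d_0$ square-free, the $d_0$-sum is majorised by $\sum_{d_0}|L^{(2c)}(\tfrac12,\chi_{a_1c_1d_0})|^{3}d_0^{-\Re(s)}$; after handling the Euler factors at primes $p\mid 2c$ (producing $A_0^{\omega(c)}$) this is precisely of the form $S(c,\delta)$. The residual sum over $d_1$ converges by the polynomial-in-$d_1$ growth of $P_d$---a consequence of the dominance property of the local coefficients $H$---and is bounded by $c^{\delta}$ via a divisor estimate. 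This yields the stated bound with no $|s|$-dependence and no $c_1,c_2,c_3$-powers.

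\textbf{Left edge, $\Re(s)=-\delta'$.} I would apply the $\sigma_4$ functional equation \eqref{fe-2}, followed by the $\sigma_1,\sigma_2,\sigma_3$ functional equations \eqref{fe-1} on the reflected series, so as to push all four variables into a region where the Step~1 bound (with suitably twisted characters) applies. Each reflection contributes a Gamma ratio $\Gamma\bigl(\tfrac{1-s_i+a}{2}\bigr)/\Gamma\bigl(\tfrac{s_i+a}{2}\bigr)$, bounded via Stirling, together with a conductor factor $\bigl(\gcd(\cond\chi,8c)\bigr)^{(1-2\Re(s))/2}$. Summing these contributions, and accounting for the shifts each $\sigma_i$ introduces in $s_4$, yields the cumulative growth $|s|^{5(1-\Re(s))}$ together with the conductor split $c_1^{3(1-\Re(s))}c_2^{5/2(1-\Re(s))}c_3^{3(1-\Re(s))}$. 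The asymmetry in the $c_i$-exponents reflects the fact that primes of $c_1$ divide $\cond\chi_{a_1c_1}$ only, primes of $c_2$ divide $\cond\chi_{a_2c_2}$ only (with the half-integer exponent because $\Lambda_c$ involves $\gcd(\cond\chi,8c)$ rather than the full conductor), and primes of $c_3$ divide neither and enter only through the Kronecker twists $\chi_{d_0},\tilde\chi_{m_0}$.

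Phragmén--Lindelöf then interpolates linearly between the two vertical edges to give \eqref{eq: basic-initial-estimate} throughout $0\le\Re(s)\le 1$. The main technical obstacle is the asymmetric conductor bookkeeping on the left edge: tracking $\gcd(\cond\chi,8c)$ carefully for each character appearing in the four $\Lambda_c$-factors across all four reflections, and verifying that the exponents collapse to exactly $3,\tfrac52,3$ on $c_1,c_2,c_3$. This delicate accounting---together with the useful reduction, noted just before the statement, to $\gcd(c_1,c_2)=1$---is exactly the content of \cite[Proposition~4.12]{DGH}, whose argument one would follow essentially verbatim.
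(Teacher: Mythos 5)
Your proposal follows essentially the same route as the paper: bound the right edge of a strip by majorising the absolutely convergent expression \eqref{MDS-vers1} (separating the square-free part $d_0$ from the square part $d_1^2$, using the polynomial-in-$d_1$ growth of $P_d$, and reducing to the three-fold $L$-moment that defines $S(c,\delta)$), then obtain the left edge by functional equations and conclude by Phragm\'en--Lindel\"of; the paper likewise states this in outline and explicitly defers the conductor bookkeeping to \cite[Proposition~4.12]{DGH}, exactly as you do.

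One small inaccuracy worth flagging: your explanation for the $\frac{5}{2}(1-\Re(s))$ exponent on $c_2$ (that it arises from $\Lambda_c$ involving $\gcd(\cond\chi, 8c)$ rather than the full conductor) is not the reason the paper gives. The paper's footnote attributes it to a combinatorial constraint within the DGH proof itself: in the notation of \cite[Proposition~4.12]{DGH}, the power of any prime factor of $d_1=l_2$ dividing the relevant product expression cannot exceed $5$, so the exponent can be sharpened from $3$ (used in DGH for uniformity) to $\frac{5}{2}$. Also, since $c=c_1c_2c_3$ is assumed square-free, $\gcd(c_1,c_2)=1$ is automatic here rather than a ``reduction''; the remark about coprimality in the paper concerns the general functional-equation setup, not a simplification specific to this proposition. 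Neither point affects the validity of your overall strategy.
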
 

Note that the characters 
$
\chi_{a b d_{\scriptscriptstyle 0}}(n): = \Big(\!\frac{a b d_{\scriptscriptstyle 0}}{n}\!\Big),
$ 
for odd positive $n,$ appearing in $S(c, \delta)$ may be imprimitive.

\begin{proof}
First, by \cite[Proposition~B.1]{D} (taking also into account the local parts at $3$), one finds that 
\begin{equation*}
\big|P_{\scriptscriptstyle d}\big(\tfrac{1}{2}, \tfrac{1}{2}, \tfrac{1}{2}; \chi_{a_{\scriptscriptstyle 1}c_{\scriptscriptstyle 1}d_{\scriptscriptstyle 0}}\big)\big| \, \le  \, 
\big(\! \tfrac{10084}{1 - 3^{\scriptscriptstyle - 2\,  \eta}}\!\big)^{\scriptscriptstyle \omega(d_{\scriptscriptstyle 1})} 
d_{\scriptscriptstyle 1}^{\scriptscriptstyle \frac{1}{2} + \eta} 
\end{equation*} 
for every small positive $\eta.$ Choosing $\eta = 1\slash 5,$ and letting $\Re(s) > 1,$ we have
\begin{equation*}  
\begin{split}
\big|Z^{(c)}\big(\tfrac{1}{2}, \tfrac{1}{2}, \tfrac{1}{2}, s; 
\chi_{a_{\scriptscriptstyle 2}c_{\scriptscriptstyle 2}}, 
\chi_{a_{\scriptscriptstyle 1}c_{\scriptscriptstyle 1}} \big)\big| 
\; &\le \sum_{\substack{(d, \, 2 c) \, = \, 1 \\  d = d_{\scriptscriptstyle 0}^{} d_{\scriptscriptstyle 1}^{2}}}
\frac{\big|L^{\scriptscriptstyle (2 c_{\scriptscriptstyle 2}c_{\scriptscriptstyle 3})}
\big(\tfrac{1}{2}, \chi_{a_{\scriptscriptstyle 1}c_{\scriptscriptstyle 1}d_{\scriptscriptstyle 0}}\big)\big|^{3}\,  
\big|P_{\scriptscriptstyle d}\big(\tfrac{1}{2}, \tfrac{1}{2}, \tfrac{1}{2}; \chi_{a_{\scriptscriptstyle 1}c_{\scriptscriptstyle 1}d_{\scriptscriptstyle 0}}\big)\big|}{d^{\Re(s)}}\\
& < \, 8^{\omega(c_{\scriptscriptstyle 2}c_{\scriptscriptstyle 3})} \; \cdot 
\sum_{(d_{\scriptscriptstyle 0}, \, 2 c) \, =  \, 1}
\frac{\big|L^{\scriptscriptstyle (2)}\big(\tfrac{1}{2}, \chi_{a_{\scriptscriptstyle 1}c_{\scriptscriptstyle 1}d_{\scriptscriptstyle 0}}\big)\big|^{3}}{d_{0}^{\Re(s)}} \!\sum_{d_{\scriptscriptstyle 1} \ge 1}  
\frac{(28358)^{\scriptscriptstyle \omega(d_{\scriptscriptstyle 1})}}
{d_{1}^{\scriptscriptstyle 13\slash 10}}\\ 
& \ll \, 8^{\omega(c_{\scriptscriptstyle 2}c_{\scriptscriptstyle 3})} \; \cdot 
\sum_{(d_{\scriptscriptstyle 0}, \, 2) \, =  \, 1}
\frac{\big|L^{\scriptscriptstyle (2)}\big(\tfrac{1}{2}, \chi_{a_{\scriptscriptstyle 1}c_{\scriptscriptstyle 1}d_{\scriptscriptstyle 0}}\big)\big|^{3}}{d_{0}^{\Re(s)}}.
\end{split}
\end{equation*} 
The last series is easily seen to be convergent by applying the Cauchy-Schwarz inequality and a well-known result 
of Heath-Brown \cite{H-B}. Applying the functional equations and the Phragmen-Lindel\"of principle, we obtain the result.
\!\!\footnote{In \cite[Proposition~4.12]{DGH}, the exponent of $c_{\scriptscriptstyle 2}$ in the convexity 
bound was chosen just for uniformity to be $3 (1 - \Re(s)).$ The better exponent with $3$ replaced by $\frac{5}{2}$ 
(which did not help improving the main results in loc. cit.) can be explained as follows. In the proof of \cite[Proposition~4.12]{DGH} (and in the notation therein), the power of any prime factor of $d_{1}^{} \! = l_{2}^{}$ 
dividing the expression 
$
(d_{1}^{} d_{2}^{}) (d_{2}^{} d_{3}^{}) (d_{3}^{} d_{4}^{})\cdots l_{4}^{ - 3} d_{4}^{}
$ 
simply cannot exceed $5.$} \end{proof}

As in \cite{D}, one of the main ingredients in the proof of Theorem \ref{Main Theorem A} is an improvement of 
\eqref{eq: basic-initial-estimate} in the $c_{\scriptscriptstyle 3}$-aspect. \!This will be established in Proposition 
\ref{key-proposition}.

\vskip5pt
{\bf Poles of multiple {D}irichlet series and their residues.} \!This section computes two residues of 
$
Z^{(c)}\!\big(s_{\scriptscriptstyle 1}, \ldots, s_{\scriptscriptstyle 4}; \chi_{a_{\scriptscriptstyle 2} c_{\scriptscriptstyle 2}}, 
\chi_{a_{\scriptscriptstyle 1} c_{\scriptscriptstyle 1}}\!\big)\!.
$ 
It follows from the functional equations that this expression has 12 possible polar hyperplanes corresponding to the positive roots of $D_{4}:$ 
$s_{\scriptscriptstyle 1},\, s_{\scriptscriptstyle 2},\, s_{\scriptscriptstyle 3} = 1,$ $s_{\scriptscriptstyle 4} = 1,$ 
$s_{\scriptscriptstyle 1} + s_{\scriptscriptstyle 4},\, s_{\scriptscriptstyle 2} + s_{\scriptscriptstyle 4},\, s_{\scriptscriptstyle 3} + s_{\scriptscriptstyle 4} = \frac{3}{2},$  $s_{\scriptscriptstyle 1} + s_{\scriptscriptstyle 2} + s_{\scriptscriptstyle 4}, \, s_{\scriptscriptstyle 1} + s_{\scriptscriptstyle 3} + s_{\scriptscriptstyle 4},\, s_{\scriptscriptstyle 2} + s_{\scriptscriptstyle 3} + s_{\scriptscriptstyle 4} = 2,$ 
$s_{\scriptscriptstyle 1} + s_{\scriptscriptstyle 2} + s_{\scriptscriptstyle 3} + s_{\scriptscriptstyle 4} = \frac{5}{2},$ 
$s_{\scriptscriptstyle 1} + s_{\scriptscriptstyle 2} + s_{\scriptscriptstyle 3} + 2s_{\scriptscriptstyle 4} = 3$ \cite{DGH}. 
We will be specializing at $s_{\scriptscriptstyle 1} = s_{\scriptscriptstyle 2} = s_{\scriptscriptstyle 3} = \frac{1}{2}$ and examining poles in $s_{\scriptscriptstyle 4}.$ 
The first three poles listed are irrelevant; the next eight specialize to poles at $s_{\scriptscriptstyle 4} = 1;$ the last one specializes to a pole at 
$s_{\scriptscriptstyle 4} = \frac{3}{4},$ which is our particular focus. We will compute the residue at $s_{\scriptscriptstyle 4} = 1$ directly, and then find the residue 
at $s_{\scriptscriptstyle 1} + s_{\scriptscriptstyle 2} + s_{\scriptscriptstyle 3} + 2s_{\scriptscriptstyle 4} = 3$ utilizing functional equations.

\vskip10pt
\begin{prop}
The function $Z^{(c)}\big(s_{\scriptscriptstyle 1}, s_{\scriptscriptstyle 2}, s_{\scriptscriptstyle 3}, s_{\scriptscriptstyle 4}; 
\chi_{a_{\scriptscriptstyle 2}c_{\scriptscriptstyle 2}}, \chi_{a_{\scriptscriptstyle 1}c_{\scriptscriptstyle 1}}\big)$ is holomorphic at $s_{\scriptscriptstyle 4} = 1$ if 
$\chi_{a_{\scriptscriptstyle 2}c_{\scriptscriptstyle 2}}$ is a nontrivial character. If $\chi_{a_{\scriptscriptstyle 2}c_{\scriptscriptstyle 2}}$ is trivial then it has a simple 
pole at $s_{\scriptscriptstyle 4} = 1$ with residue 
\begin{equation*}
\begin{split}
&R^{(c)}(s_{\scriptscriptstyle 1}, s_{\scriptscriptstyle 2}, s_{\scriptscriptstyle 3}) := 
\underset{s_{\scriptscriptstyle 4} = 1}{\Res} \; Z^{(c)}\big(s_{\scriptscriptstyle 1}, s_{\scriptscriptstyle 2}, s_{\scriptscriptstyle 3}, s_{\scriptscriptstyle 4}
; 1, \chi_{a_{\scriptscriptstyle 1}c_{\scriptscriptstyle 1}}\big) \\
& = \zeta^{(2c)}(2s_{\scriptscriptstyle 1})\zeta^{(2c)}(2s_{\scriptscriptstyle 2})\zeta^{(2c)}(2s_{\scriptscriptstyle 3})
\zeta^{(2c)}(s_{\scriptscriptstyle 1} + s_{\scriptscriptstyle 2})\zeta^{(2c)}(s_{\scriptscriptstyle 1} + s_{\scriptscriptstyle 3})
\zeta^{(2c)}(s_{\scriptscriptstyle 2} + s_{\scriptscriptstyle 3})\zeta^{(2c)}(2s_{\scriptscriptstyle 1} + 2s_{\scriptscriptstyle 2} + 
2s_{\scriptscriptstyle 3} - 1)\prod_{p\mid 2c}(1 - p^{-1}).
\end{split}
\end{equation*}
\end{prop}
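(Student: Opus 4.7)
The plan is to work from the second expression \eqref{MDS-vers2} for $Z^{(c)},$ in which the $s_{\scriptscriptstyle 4}$-dependence is concentrated in the factors $L^{(2c)}(s_{\scriptscriptstyle 4}, \chi_{a_{\scriptscriptstyle 2}c_{\scriptscriptstyle 2}}\tilde{\chi}_{m_{\scriptscriptstyle 0}}),$ since each $Q_{m_{\scriptscriptstyle 1}, m_{\scriptscriptstyle 2}, m_{\scriptscriptstyle 3}}(s_{\scriptscriptstyle 4};\,\cdot\,)$ is an entire Dirichlet polynomial in $s_{\scriptscriptstyle 4}.$ For $\Re(s_{\scriptscriptstyle i}),$ $i = 1, 2, 3,$ sufficiently large, the triple sum converges absolutely and uniformly in $s_{\scriptscriptstyle 4}$ on compact sets about $s_{\scriptscriptstyle 4} = 1,$ so the pole structure there is read off termwise and is controlled by the $m_{\scriptscriptstyle 0}$ for which $\chi_{a_{\scriptscriptstyle 2}c_{\scriptscriptstyle 2}}\tilde{\chi}_{m_{\scriptscriptstyle 0}}$ is principal. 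Since $\chi_{a_{\scriptscriptstyle 2}c_{\scriptscriptstyle 2}}$ has conductor dividing $8c$ while $\tilde{\chi}_{m_{\scriptscriptstyle 0}}$ is supported on primes of the square-free integer $m_{\scriptscriptstyle 0},$ which is coprime to $2c,$ the product is principal iff both factors are; this immediately yields holomorphicity at $s_{\scriptscriptstyle 4} = 1$ when $\chi_{a_{\scriptscriptstyle 2}c_{\scriptscriptstyle 2}}$ is nontrivial, and otherwise confines the pole contribution to the terms with $m_{\scriptscriptstyle 0} = 1,$ i.e., $m_{\scriptscriptstyle 1}m_{\scriptscriptstyle 2}m_{\scriptscriptstyle 3}$ a perfect square.

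Assuming now that $\chi_{a_{\scriptscriptstyle 2}c_{\scriptscriptstyle 2}}$ is trivial, using $\underset{s_{\scriptscriptstyle 4} = 1}{\Res}\,\zeta^{(2c)}(s_{\scriptscriptstyle 4}) = \prod_{p \mid 2c}(1 - p^{-1})$ and the fact that $\chi_{a_{\scriptscriptstyle 1}c_{\scriptscriptstyle 1}}$ takes the value $1$ on any perfect square coprime to its conductor, the residue takes the form
\begin{equation*}
R^{(c)}(s_{\scriptscriptstyle 1}, s_{\scriptscriptstyle 2}, s_{\scriptscriptstyle 3}) \; = \; \prod_{p \mid 2c}(1 - p^{-1}) \,\cdot\!\! \sum_{\substack{m_{\scriptscriptstyle 1}, m_{\scriptscriptstyle 2}, m_{\scriptscriptstyle 3} \geq 1,\; \gcd(m_{\scriptscriptstyle 1}m_{\scriptscriptstyle 2}m_{\scriptscriptstyle 3},\, 2c) = 1 \\ m_{\scriptscriptstyle 1}m_{\scriptscriptstyle 2}m_{\scriptscriptstyle 3}\; \text{a perfect square}}} \frac{Q_{m_{\scriptscriptstyle 1}, m_{\scriptscriptstyle 2}, m_{\scriptscriptstyle 3}}(1;\, 1)}{m_{\scriptscriptstyle 1}^{s_{\scriptscriptstyle 1}} m_{\scriptscriptstyle 2}^{s_{\scriptscriptstyle 2}} m_{\scriptscriptstyle 3}^{s_{\scriptscriptstyle 3}}}.
\end{equation*}
The key observation is that, precisely because $m_{\scriptscriptstyle 1}m_{\scriptscriptstyle 2}m_{\scriptscriptstyle 3}$ is a perfect square coprime to $2c,$ each Kronecker symbol $\bigl(p \slash (m_{\scriptscriptstyle 1}m_{\scriptscriptstyle 2}m_{\scriptscriptstyle 3}\, p^{-k_{\scriptscriptstyle 1}-k_{\scriptscriptstyle 2}-k_{\scriptscriptstyle 3}})\bigr)^{l}$ appearing in the local definition of $Q_{m_{\scriptscriptstyle 1}, m_{\scriptscriptstyle 2}, m_{\scriptscriptstyle 3}}$ at the prime $p$ equals $1,$ since its argument is a square coprime to $p.$ Consequently $Q_{m_{\scriptscriptstyle 1}, m_{\scriptscriptstyle 2}, m_{\scriptscriptstyle 3}}(1; 1)$ factors as a genuine product of purely local pieces $Q_{p}(k_{\scriptscriptstyle 1}, k_{\scriptscriptstyle 2}, k_{\scriptscriptstyle 3}; 1),$ and the sum above becomes the Euler product $\prod_{p \nmid 2c} E_{p}(s_{\scriptscriptstyle 1}, s_{\scriptscriptstyle 2}, s_{\scriptscriptstyle 3}),$ where
\begin{equation*}
E_{p}(s_{\scriptscriptstyle 1}, s_{\scriptscriptstyle 2}, s_{\scriptscriptstyle 3}) \; = \sum_{\substack{k_{\scriptscriptstyle 1},\, k_{\scriptscriptstyle 2},\, k_{\scriptscriptstyle 3}\, \geq\, 0 \\ k_{\scriptscriptstyle 1}+k_{\scriptscriptstyle 2}+k_{\scriptscriptstyle 3}\;\text{even}}} Q_{p}(k_{\scriptscriptstyle 1}, k_{\scriptscriptstyle 2}, k_{\scriptscriptstyle 3}; 1)\, p^{-k_{\scriptscriptstyle 1} s_{\scriptscriptstyle 1} - k_{\scriptscriptstyle 2} s_{\scriptscriptstyle 2} - k_{\scriptscriptstyle 3} s_{\scriptscriptstyle 3}}.
\end{equation*}

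It then remains to verify the local identity
\begin{equation*}
E_{p}(s_{\scriptscriptstyle 1}, s_{\scriptscriptstyle 2}, s_{\scriptscriptstyle 3}) \; = \; \prod_{i=1}^{3}(1 - p^{-2s_{\scriptscriptstyle i}})^{-1} \,\cdot\!\! \prod_{1 \leq i < j \leq 3}\!\!(1 - p^{-s_{\scriptscriptstyle i} - s_{\scriptscriptstyle j}})^{-1} \,\cdot\, (1 - p^{-2s_{\scriptscriptstyle 1} - 2s_{\scriptscriptstyle 2} - 2s_{\scriptscriptstyle 3} + 1})^{-1};
\end{equation*}
assembling the Euler product over $p \nmid 2c$ then produces the seven incomplete $\zeta^{(2c)}$ factors in the claimed formula. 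I plan to carry this out directly from the explicit rational function form of $Z_{\scriptscriptstyle p}(s_{\scriptscriptstyle 1}, s_{\scriptscriptstyle 2}, s_{\scriptscriptstyle 3}, s_{\scriptscriptstyle 4})$ given in \cite[Appendix~B, Eq.~32]{D}, by extracting from $Z_{\scriptscriptstyle p}$ the piece encoding $\sum_{l} H(p^{k_{\scriptscriptstyle 1}}, p^{k_{\scriptscriptstyle 2}}, p^{k_{\scriptscriptstyle 3}}, p^{l}) p^{-l},$ multiplying by the $(1-p^{-1})$ factor, and projecting to even $k_{\scriptscriptstyle 1}+k_{\scriptscriptstyle 2}+k_{\scriptscriptstyle 3}.$ The main obstacle is precisely this final algebraic manipulation: the rational function of \cite[App.~B, Eq.~32]{D} is bulky, and substantial cancellation among the $H(p^{k_{\scriptscriptstyle 1}}, p^{k_{\scriptscriptstyle 2}}, p^{k_{\scriptscriptstyle 3}}, p^{l})$ coefficients is required to reach the predicted seven-factor product. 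A useful consistency check is that $E_{p}$ must be symmetric in $s_{\scriptscriptstyle 1}, s_{\scriptscriptstyle 2}, s_{\scriptscriptstyle 3}$ and, by the $D_{4}$ polar description of $Z^{(c)}$ already recalled, can have poles only along the seven advertised hyperplanes; these structural constraints together with the normalization from the $(k_{\scriptscriptstyle 1}, k_{\scriptscriptstyle 2}, k_{\scriptscriptstyle 3}) = (0, 0, 0)$ term pin down the identity.
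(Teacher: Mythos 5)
Your proposal is correct and follows essentially the same route as the paper's proof: use the second expression \eqref{MDS-vers2}, observe that the polynomials $Q_{m_{\scriptscriptstyle 1}, m_{\scriptscriptstyle 2}, m_{\scriptscriptstyle 3}}(s_{\scriptscriptstyle 4};\,\cdot\,)$ are entire in $s_{\scriptscriptstyle 4}$, isolate the pole to the terms with $m_{\scriptscriptstyle 0}=1$ (so $\chi_{a_{\scriptscriptstyle 2}c_{\scriptscriptstyle 2}}$ trivial and $m_{\scriptscriptstyle 1}m_{\scriptscriptstyle 2}m_{\scriptscriptstyle 3}$ a perfect square), factor the resulting sum as an Euler product, and evaluate the local factor from the explicit rational function $Z_{\scriptscriptstyle p}$. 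Your explicit justification that the Kronecker symbols $\bigl(p\slash m_{\scriptscriptstyle 1}m_{\scriptscriptstyle 2}m_{\scriptscriptstyle 3}p^{-k_{\scriptscriptstyle 1}-k_{\scriptscriptstyle 2}-k_{\scriptscriptstyle 3}}\bigr)$ all trivialize when $m_{\scriptscriptstyle 1}m_{\scriptscriptstyle 2}m_{\scriptscriptstyle 3}$ is a square coprime to $2c$ is a welcome clarification of why the Euler product factorization is legitimate; the paper leaves this implicit, and otherwise both proofs defer the final local identity to a direct (mechanical) computation from \cite[App.~B, Eq.~32]{D}.
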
 

\begin{proof} We utilize the expression \eqref{MDS-vers2}. \!In this expression, the polynomials 
$
Q_{\scriptscriptstyle m_{\scriptscriptstyle 1},\, m_{\scriptscriptstyle 2},\, m_{\scriptscriptstyle 3}}
(s_{\scriptscriptstyle 4}; \chi_{a_{\scriptscriptstyle 2}c_{\scriptscriptstyle 2}}\tilde{\chi}_{m_{0}})
$ 
do not contribute poles at $s_{\scriptscriptstyle 4} = 1.$ Poles arise only when the quadratic $L$-function 
$
L^{(2c)}(s_{\scriptscriptstyle 4}, \chi_{a_{\scriptscriptstyle 2}c_{\scriptscriptstyle 2}}\tilde{\chi}_{m_{0}})
$ 
is actually a zeta function -- that is, when $a_{\scriptscriptstyle 2} = c_{\scriptscriptstyle 2} = 1$ and $m_{0} = 1,$ or, equivalently, 
$m_{\scriptscriptstyle 1}m_{\scriptscriptstyle 2}m_{\scriptscriptstyle 3}$ is a perfect square. In this case, $\zeta^{(2c)}(s_{\scriptscriptstyle 4})$ has a simple pole 
at $s_{\scriptscriptstyle 4} = 1,$\\ with residue $\prod_{p\mid 2c} (1 - p^{-1}).$ Thus 
$
Z^{(c)}\big(s_{\scriptscriptstyle 1}, s_{\scriptscriptstyle 2}, s_{\scriptscriptstyle 3}, s_{\scriptscriptstyle 4}; 
\chi_{a_{\scriptscriptstyle 2}c_{\scriptscriptstyle 2}}, \chi_{a_{\scriptscriptstyle 1}c_{\scriptscriptstyle 1}}\big)
$ 
is holomorphic at $s_{\scriptscriptstyle 4} = 1$ if $\chi_{a_{\scriptscriptstyle 2}c_{\scriptscriptstyle 2}}$ is a nontrivial character; if 
$\chi_{a_{\scriptscriptstyle 2}c_{\scriptscriptstyle 2}}$ is trivial then it has a simple pole at $s_{\scriptscriptstyle 4} = 1$ with residue 
\begin{equation*}
R^{(c)}(s_{\scriptscriptstyle 1}, s_{\scriptscriptstyle 2}, s_{\scriptscriptstyle 3}) = 
\prod_{p\mid 2c} (1 - p^{-1}) \cdot \sum_{\substack{m_{\scriptscriptstyle 1},\,  m_{\scriptscriptstyle 2}, \, m_{\scriptscriptstyle 3} \, \geq \, 1 
\\ \gcd(m_{\scriptscriptstyle 1}m_{\scriptscriptstyle 2}m_{\scriptscriptstyle 3}, 2c) = 1 \\ 
m_{\scriptscriptstyle 1}m_{\scriptscriptstyle 2}m_{\scriptscriptstyle 3}-\text{square}}} m_{\scriptscriptstyle 1}^{- s_{\scriptscriptstyle 1}}
m_{\scriptscriptstyle 2}^{- s_{\scriptscriptstyle 2}} m_{\scriptscriptstyle 3}^{- s_{\scriptscriptstyle 3}} 
Q_{\scriptscriptstyle m_{\scriptscriptstyle 1},\, m_{\scriptscriptstyle 2}, \, m_{\scriptscriptstyle 3}}(1; 1).
\end{equation*} 
This expression is independent of $a_{\scriptscriptstyle 1}$ and $c_{\scriptscriptstyle 1}.$ 

Recall that 
$
Q_{\scriptscriptstyle m_{\scriptscriptstyle 1}, \, m_{\scriptscriptstyle 2},\, m_{\scriptscriptstyle 3}}
(s_{\scriptscriptstyle 4}; \chi_{a_{\scriptscriptstyle 2}c_{\scriptscriptstyle 2}}\tilde{\chi}_{m_{0}})
$ 
was defined as a product over primes $p$ such that 
$p^{k_{\scriptscriptstyle 1}}\parallel m_{\scriptscriptstyle 1},$ $p^{k_{\scriptscriptstyle 2}}\parallel m_{\scriptscriptstyle 2},$ 
$p^{k_{\scriptscriptstyle 3}}\parallel m_{\scriptscriptstyle 3},$ and $k_{\scriptscriptstyle 1} + k_{\scriptscriptstyle 2} + k_{\scriptscriptstyle 3} \geq 2.$ 
It follows that the residue has an Euler product expression; the factor at $p$ for $p \nmid 2c$ is: 
\begin{equation*}
1 \; + \sum_{k_{\scriptscriptstyle 1} + k_{\scriptscriptstyle 2} + k_{\scriptscriptstyle 3}\, \geq \, 2\; \text{even}} 
Q_{\scriptscriptstyle p^{k_{\scriptscriptstyle 1}},\, p^{k_{\scriptscriptstyle 2}}, \, p^{k_{\scriptscriptstyle 3}}}(1; 1) 
p^{- k_{\scriptscriptstyle 1}s_{\scriptscriptstyle 1} - k_{\scriptscriptstyle 2}s_{\scriptscriptstyle 2} - k_{\scriptscriptstyle 3}s_{\scriptscriptstyle 3}}. 
\end{equation*}
This can be evaluated directly from the explicit generating function $Z_{\scriptscriptstyle p}(s_{\scriptscriptstyle 1}, s_{\scriptscriptstyle 2}, s_{\scriptscriptstyle 3}, s_{\scriptscriptstyle 4})$ as 
\begin{equation*}
\big(1 - p^{- 2s_{\scriptscriptstyle 1}}\big)^{-1} 
\big(1 - p^{- 2s_{\scriptscriptstyle 2}}\big)^{-1} 
\big(1 - p^{- 2s_{\scriptscriptstyle 3}}\big)^{-1} 
\big(1 - p^{- s_{\scriptscriptstyle 1} - s_{\scriptscriptstyle 2}}\big)^{-1} 
\big(1 - p^{- s_{\scriptscriptstyle 1} - s_{\scriptscriptstyle 3}}\big)^{-1} 
\big(1 - p^{- s_{\scriptscriptstyle 2} - s_{\scriptscriptstyle 3}}\big)^{-1} 
\big(1 - p^{1 - 2s_{\scriptscriptstyle 1} - 2s_{\scriptscriptstyle 2} - 2s_{\scriptscriptstyle 3}}\big)^{-1}
\end{equation*} 
from which the theorem follows.
\end{proof} 
To simplify the computation of the second residue, we restrict to the situation of particular interest to us: fix 
$
s_{\scriptscriptstyle 1} = s_{\scriptscriptstyle 2} = s_{\scriptscriptstyle 3} = \frac{1}{2},
$ 
$s_{\scriptscriptstyle 4} = s.$ We also assume that $\gcd(c_{\scriptscriptstyle 1}, c_{\scriptscriptstyle 2}) = 1,$ $a_{\scriptscriptstyle 1} = 2,$ and 
$a_{\scriptscriptstyle 2} = (-1)^{(c_{\scriptscriptstyle 2} - 1)\slash 2}$ (so that 
$\chi_{a_{\scriptscriptstyle 2}c_{\scriptscriptstyle 2}} = \tilde{\chi}_{c_{\scriptscriptstyle 2}}$). 

\vskip10pt
\begin{prop} \label{residue}
Suppose that $c = c_{\scriptscriptstyle 1}c_{\scriptscriptstyle 2}c_{\scriptscriptstyle 3}.$ Then 
$
Z^{(c)}\big(\frac{1}{2}, \frac{1}{2}, \frac{1}{2}, s; \tilde{\chi}_{c_{\scriptscriptstyle 2}}, \chi_{2c_{\scriptscriptstyle 1}}\big)
$ 
has a simple pole at $s = \frac{3}{4},$ with residue 
\begin{equation} \label{mds-residue}
\begin{split}
\underset{s = \frac{3}{4}}{\Res} \; Z^{(c)}\big(\tfrac{1}{2}, \tfrac{1}{2}, \tfrac{1}{2}, s; \tilde{\chi}_{c_{\scriptscriptstyle 2}}, \chi_{2c_{\scriptscriptstyle 1}}\big) \,
= \, &\tfrac{9}{256\pi} 2^{\frac{1}{4}} (- 181 + 128\sqrt{2}\,) \Gamma\left(\tfrac{1}{4}\right)^{\! 4} 
\zeta\left(\tfrac{1}{2}\right)^{\! 7} \!\left(\tfrac{2c_{\scriptscriptstyle 1}}{c_{\scriptscriptstyle 2}}\right) \\
&\cdot c_{\scriptscriptstyle 1}^{- \frac{1}{4}}\prod_{p \mid c_{\scriptscriptstyle 1}}\big(1 - p^{- \frac{1}{2}}\big)^{\! 8} 
\big(1 + p^{- \frac{1}{2}}\big)^{\! 2} \big(1 + 6p^{- \frac{1}{2}} + p^{-1}\big) \\
&\cdot c_{\scriptscriptstyle 2}^{- \frac{1}{2}}\prod_{p\mid c_{\scriptscriptstyle 2}} 
\big(1 - p^{- \frac{1}{2}} \big)^{\! 8} \big(1 + p^{- \frac{1}{2}}\big) \big(3 + 7p^{- \frac{1}{2}} + 3p^{-1} \big) \\ 
&\cdot \prod_{p\mid c_{\scriptscriptstyle 3}} \big(1 - p^{- \frac{1}{2}}\big)^{\! 8} 
\big(1 + p^{- \frac{1}{2}}\big) \big(1 + 7p^{- \frac{1}{2}} + 13p^{-1} + 7p^{- \frac{3}{2}} + p^{-2}\big).
\end{split}
\end{equation}
\end{prop}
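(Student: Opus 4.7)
The strategy is to exploit the fact that the $12$ polar hyperplanes of $Z^{(c)}$ correspond to the positive roots of $D_{4},$ which form a single Weyl orbit. Since the residue at $s_{\scriptscriptstyle 4} = 1$ has already been computed as $R^{(c)}(s_{\scriptscriptstyle 1}, s_{\scriptscriptstyle 2}, s_{\scriptscriptstyle 3}),$ I would transfer it to the highest-root hyperplane $\{s_{\scriptscriptstyle 1} + s_{\scriptscriptstyle 2} + s_{\scriptscriptstyle 3} + 2s_{\scriptscriptstyle 4} = 3\},$ which at $s_{\scriptscriptstyle 1} = s_{\scriptscriptstyle 2} = s_{\scriptscriptstyle 3} = \tfrac{1}{2}$ specializes to the pole at $s = \tfrac{3}{4},$ by applying a suitable composition of the vector functional equations \eqref{fe-1} and \eqref{fe-2}.

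First I would identify the appropriate Weyl element. A direct check shows that $w = \sigma_{\scriptscriptstyle 3}\sigma_{\scriptscriptstyle 2}\sigma_{\scriptscriptstyle 1}\sigma_{\scriptscriptstyle 4}$ maps $\big(\tfrac{1}{2}, \tfrac{1}{2}, \tfrac{1}{2}, \tfrac{3}{4}\big)$ to $\big(\tfrac{1}{4}, \tfrac{1}{4}, \tfrac{1}{4}, 1\big),$ so I would realize $w$ as four successive applications of functional equations: first $\sigma_{\scriptscriptstyle 4}$ from \eqref{fe-2}, then $\sigma_{\scriptscriptstyle 1}, \sigma_{\scriptscriptstyle 2}, \sigma_{\scriptscriptstyle 3}$ from \eqref{fe-1}. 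Each of these is a vector functional equation in which a weighted sum over auxiliary characters is $\sigma_{i}$-symmetric up to a completion factor $\Lambda_{c}.$ After each step, I would invert the character sum using orthogonality on $(\ZZ\slash 8c\ZZ)^{*}/(\ZZ\slash 8c\ZZ)^{* \, 2}$ to isolate the specific $Z^{(c)}\big(\,\cdot\,; \tilde{\chi}_{c_{\scriptscriptstyle 2}}, \chi_{2 c_{\scriptscriptstyle 1}}\big)$ that appears in the statement.

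Carrying this out expresses the desired residue as $R^{(c)}\big(\tfrac{1}{4}, \tfrac{1}{4}, \tfrac{1}{4}\big) = \zeta^{(2c)}\big(\tfrac{1}{2}\big)^{\! 7}\prod_{p\mid 2c}(1 - p^{-1})$ multiplied by an accumulated product of ratios of $\Lambda_{c}$ factors. At the archimedean place, the four $\Gamma$-contributions collapse to $\Gamma\big(\tfrac{1}{4}\big)^{\! 4}$ via the completion identity $\Lambda\big(\tfrac{1}{2}\big) = \pi^{- 1/4}\Gamma\big(\tfrac{1}{4}\big)\zeta\big(\tfrac{1}{2}\big),$ which also accounts for the $\tfrac{1}{\pi},$ $c_{\scriptscriptstyle 1}^{- 1/4},$ and $c_{\scriptscriptstyle 2}^{- 1/2}$ factors arising from the conductor $\gcd(\cond \chi, 8c)$ in each $\Lambda_{c}.$ At each finite prime $p \mid c,$ the Euler factor $L_{\scriptscriptstyle p}$ appearing in $\Lambda_{c}$ combines with the Euler factor of $\zeta^{(2c)}\big(\tfrac{1}{2}\big)^{\! 7}\prod_{p\mid 2c}(1 - p^{-1})$ that is missing at $p$ to produce, according to whether $p \mid c_{\scriptscriptstyle 1},$ $c_{\scriptscriptstyle 2},$ or $c_{\scriptscriptstyle 3},$ the three distinct Euler products displayed in \eqref{mds-residue}.

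The main obstacle is the meticulous bookkeeping of characters, conductors, Gauss sums, and local Euler products through four successive vector functional equations. The $2$-adic constant $2^{\, 1/4}(-181 + 128\sqrt{2})$ and the Jacobi symbol $\big(\tfrac{2c_{\scriptscriptstyle 1}}{c_{\scriptscriptstyle 2}}\big)$ emerge from the subtle interaction between the ramification of $\chi_{2c_{\scriptscriptstyle 1}}$ at $2,$ the Euler factors at $2$ in $\Lambda_{c},$ and the evaluation of the auxiliary characters in the vector functional equations on the representatives corresponding to $c_{\scriptscriptstyle 1}$ and $c_{\scriptscriptstyle 2}.$
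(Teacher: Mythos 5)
Your proposal matches the paper's argument: the paper likewise applies the composite functional equation $\sigma_1\sigma_2\sigma_3\sigma_4$ (with $\sigma_4$ acting first) to carry $(\tfrac12,\tfrac12,\tfrac12,\tfrac34)$ to $(\tfrac14,\tfrac14,\tfrac14,1)$, pulls the residue $R^{(c)}(\tfrac14,\tfrac14,\tfrac14)$ through, and then uses orthogonality over $(\ZZ/8c\ZZ)^{*}/(\ZZ/8c\ZZ)^{*2}$ to evaluate the accumulated $\Lambda_c$ ratios. One small inaccuracy worth fixing: the archimedean simplification to $\Gamma(\tfrac14)^4$ does not follow from the single ``completion identity'' you cite; in the paper it emerges from summing the ratios $\Gamma(\tfrac18)/\Gamma(\tfrac38)$ and $\Gamma(\tfrac58)/\Gamma(\tfrac78)$ over the four choices of $a_1'$ and then using the reflection and duplication formulas (e.g. $\Gamma(\tfrac18)\Gamma(\tfrac58)=2^{3/4}\sqrt{\pi}\,\Gamma(\tfrac14)$), which is also what produces the $2$-adic constant $2^{1/4}(-181+128\sqrt{2})$ after combining with the local factors at $2$.
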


\begin{proof}
Apply the functional equations $\sigma_{\scriptscriptstyle 1} \sigma_{\scriptscriptstyle 2} \sigma_3 \sigma_4$:
\begin{align*}
&Z^{(c)}\big(s_{\scriptscriptstyle 1}, s_{\scriptscriptstyle 2}, s_{\scriptscriptstyle 3}, s_{\scriptscriptstyle 4}; \chi_{a_{\scriptscriptstyle 2}c_{\scriptscriptstyle 2}}, \chi_{a_{\scriptscriptstyle 1}c_{\scriptscriptstyle 1}}\big) \\
&=2^{-2\omega(c)-4} \sum_{D, \, M \in \frac{(\ZZ/8c\ZZ)^*}{(\ZZ/8c\ZZ)^{*2}}} \chi_{a_{\scriptscriptstyle 1}c_{\scriptscriptstyle 1}}(M) \chi_{a_{\scriptscriptstyle 2}c_{\scriptscriptstyle 2}}(D) \sum_{\substack{a_{\scriptscriptstyle 1}', \, a_{\scriptscriptstyle 2}'\in \{\pm 1, \pm 2\}\\ c_{\scriptscriptstyle 1}', \, c_{\scriptscriptstyle 2}'
\mid c}} \chi_{a_{\scriptscriptstyle 1}'c_{\scriptscriptstyle 1}'}(M)\chi_{a_{\scriptscriptstyle 2}'c_{\scriptscriptstyle 2}'}(D) \nonumber \\
&\cdot \frac{\Lambda_{c}\big(\tfrac{3}{2}-s_{\scriptscriptstyle 1}-s_{\scriptscriptstyle 4}; \chi_{a_{\scriptscriptstyle 1}'c_{\scriptscriptstyle 1}'}\chi_{D} \big)
\Lambda_{c}\big(\tfrac{3}{2}-s_{\scriptscriptstyle 2}-s_{\scriptscriptstyle 4}; \chi_{a_{\scriptscriptstyle 1}'c_{\scriptscriptstyle 1}'}\chi_{D} \big)
\Lambda_{c}\big(\tfrac{3}{2}-s_{\scriptscriptstyle 3}-s_{\scriptscriptstyle 4}; \chi_{a_{\scriptscriptstyle 1}'c_{\scriptscriptstyle 1}'}\chi_{D} \big)
\Lambda_{c}(1-s_{\scriptscriptstyle 4}; \chi_{a_{\scriptscriptstyle 2}c_{\scriptscriptstyle 2}}\tilde{\chi}_M)}
{\Lambda_{c}\big(s_{\scriptscriptstyle 1}+s_{\scriptscriptstyle 4}-\frac{1}{2}; \chi_{a_{\scriptscriptstyle 1}'c_{\scriptscriptstyle 1}'}\chi_{D} \big)
\Lambda_{c}\big(s_{\scriptscriptstyle 2}+s_{\scriptscriptstyle 4}-\frac{1}{2}; \chi_{a_{\scriptscriptstyle 1}'c_{\scriptscriptstyle 1}'}\chi_{D} \big) 
\Lambda_{c}\big(s_{\scriptscriptstyle 3}+s_{\scriptscriptstyle 4}-\tfrac{1}{2}; \chi_{a_{\scriptscriptstyle 1}'c_{\scriptscriptstyle 1}'}\chi_{D} \big)
\Lambda_{c}(s_{\scriptscriptstyle 4}; \chi_{a_{\scriptscriptstyle 2}c_{\scriptscriptstyle 2}}\tilde{\chi}_M)}  \nonumber \\
&\cdot Z^{(c)}\big(\tfrac{3}{2}-s_{\scriptscriptstyle 1}-s_{\scriptscriptstyle 4}, \tfrac{3}{2}-s_{\scriptscriptstyle 2}-s_{\scriptscriptstyle 4}, \tfrac{3}{2} 
- s_{\scriptscriptstyle 3}-s_{\scriptscriptstyle 4}, s_{\scriptscriptstyle 1}+s_{\scriptscriptstyle 2}+s_{\scriptscriptstyle 3}+2s_{\scriptscriptstyle 4}-2; \chi_{a_{\scriptscriptstyle 2}' c_{\scriptscriptstyle 2}'}, \chi_{a_{\scriptscriptstyle 1}' c_{\scriptscriptstyle 1}'}\big).
\end{align*}
The simple pole at $s_{\scriptscriptstyle 1}+s_{\scriptscriptstyle 2}+s_{\scriptscriptstyle 3}+2s_{\scriptscriptstyle 4}=3$ arises from summands of 
\begin{equation*}
Z^{(c)}\big(
\tfrac{3}{2}-s_{\scriptscriptstyle 1} - s_{\scriptscriptstyle 4}, 
\tfrac{3}{2}-s_{\scriptscriptstyle 2} - s_{\scriptscriptstyle 4}, 
\tfrac{3}{2}-s_{\scriptscriptstyle 3} - s_{\scriptscriptstyle 4}, 
2s_{\scriptscriptstyle 4} + s_{\scriptscriptstyle 1}+s_{\scriptscriptstyle 2}+s_{\scriptscriptstyle 3}-2; 
\chi_{a_{\scriptscriptstyle 2}' c_{\scriptscriptstyle 2}'}, \chi_{a_{\scriptscriptstyle 1}' c_{\scriptscriptstyle 1}'}\big)
\end{equation*} 
in this expression with $a_{\scriptscriptstyle 2}' = c_{\scriptscriptstyle 2}'=1.$ The full residue is:
\begin{align*}
&\underset{s_{\scriptscriptstyle 1}+s_{\scriptscriptstyle 2}+s_{\scriptscriptstyle 3}+2s_{\scriptscriptstyle 4}=3}{\Res} \; Z^{(c)}\big(s_{\scriptscriptstyle 1}, s_{\scriptscriptstyle 2}, s_{\scriptscriptstyle 3}, s_{\scriptscriptstyle 4}; \chi_{a_{\scriptscriptstyle 2}c_{\scriptscriptstyle 2}}, \chi_{a_{\scriptscriptstyle 1}c_{\scriptscriptstyle 1}}\big) \\
&=R^{(c)}\big(\tfrac{3}{2}-s_{\scriptscriptstyle 1}-s_{\scriptscriptstyle 4}, \tfrac{3}{2}-s_{\scriptscriptstyle 2}-s_{\scriptscriptstyle 4}, 
\tfrac{3}{2}-s_{\scriptscriptstyle 3}-s_{\scriptscriptstyle 4}\big)\, 2^{-2\omega(c)-4} 
\sum_{D, \, M \in \frac{(\ZZ/8c\ZZ)^*}{(\ZZ/8c\ZZ)^{*2}}} \chi_{a_{\scriptscriptstyle 1}c_{\scriptscriptstyle 1}}(M) 
\chi_{a_{\scriptscriptstyle 2}c_{\scriptscriptstyle 2}}(D) 
\sum_{\substack{a_{\scriptscriptstyle 1}' \in \{\pm 1, \pm 2\}\\ c_{\scriptscriptstyle 1}' \mid c}} \chi_{a_{\scriptscriptstyle 1}' c_{\scriptscriptstyle 1}'}(M) \\
&\cdot 
\frac{\Lambda_{c}\big(\tfrac{3}{2}-s_{\scriptscriptstyle 1}-s_{\scriptscriptstyle 4}; \chi_{a_{\scriptscriptstyle 1}'c_{\scriptscriptstyle 1}'}\chi_{D}\big) 
\Lambda_{c}\big(\tfrac{3}{2}-s_{\scriptscriptstyle 2}-s_{\scriptscriptstyle 4}; \chi_{a_{\scriptscriptstyle 1}'c_{\scriptscriptstyle 1}'}\chi_{D} \big) 
\Lambda_{c}\big(\tfrac{3}{2}-s_{\scriptscriptstyle 3}-s_{\scriptscriptstyle 4}; \chi_{a_{\scriptscriptstyle 1}'c_{\scriptscriptstyle 1}'}\chi_{D} \big)
\Lambda_{c}(1 - s_{\scriptscriptstyle 4}; \chi_{a_{\scriptscriptstyle 2}c_{\scriptscriptstyle 2}}\tilde{\chi}_M)}
{\Lambda_{c}\big(s_{\scriptscriptstyle 1}+s_{\scriptscriptstyle 4}-\tfrac{1}{2}; \chi_{a_{\scriptscriptstyle 1}'c_{\scriptscriptstyle 1}'}\chi_{D} \big) 
\Lambda_{c}\big(s_{\scriptscriptstyle 2}+s_{\scriptscriptstyle 4}-\tfrac{1}{2}; \chi_{a_{\scriptscriptstyle 1}'c_{\scriptscriptstyle 1}'}\chi_{D}\big) 
\Lambda_{c}\big(s_{\scriptscriptstyle 3}+s_{\scriptscriptstyle 4}-\tfrac{1}{2}; \chi_{a_{\scriptscriptstyle 1}'c_{\scriptscriptstyle 1}'}\chi_{D}\big) 
\Lambda_{c}(s_{\scriptscriptstyle 4}; \chi_{a_{\scriptscriptstyle 2}c_{\scriptscriptstyle 2}}\tilde{\chi}_M)}.
\end{align*}
To proceed, we adopt the hypotheses of the proposition. We must also divide by a factor of $2$ to translate the residue at $2s=\frac{3}{2}$ to the residue at 
$s=\frac{3}{4}.$ The result is  
\begin{align*}
&\underset{s = \frac{3}{4}}{\Res} \; Z^{(c)}\big(\tfrac{1}{2}, \tfrac{1}{2}, \tfrac{1}{2}, s; \tilde{\chi}_{c_{\scriptscriptstyle 2}}, \chi_{2c_{\scriptscriptstyle 1}}\big) \\
& = R^{(c)}\big(\tfrac{1}{4}, \tfrac{1}{4}, \tfrac{1}{4}\big) \, 2^{- 2\omega(c) - 5} 
\sum_{D, \, M \in \frac{(\ZZ/8c\ZZ)^*}{(\ZZ/8c\ZZ)^{*2}}} \chi_{2c_{\scriptscriptstyle 1}}(M) \tilde{\chi}_{c_{\scriptscriptstyle 2}}(D) 
\sum_{\substack{a_{\scriptscriptstyle 1}' \in \{\pm 1, \pm 2\}\\ c_{\scriptscriptstyle 1}' \mid c}} 
\chi_{a_{\scriptscriptstyle 1}' c_{\scriptscriptstyle 1}'}(M) \,
\frac{\Lambda_{c}\big(\tfrac{1}{4}; \chi_{a_{\scriptscriptstyle 1}'c_{\scriptscriptstyle 1}'}\chi_{D}\big)^{\! 3} 
\Lambda_{c}\big(\tfrac{1}{4}; \tilde{\chi}_{c_{\scriptscriptstyle 2}}\tilde{\chi}_{M}\big)}
{\Lambda_{c}\big(\tfrac{3}{4}; \chi_{a_{\scriptscriptstyle 1}'c_{\scriptscriptstyle 1}'}\chi_{D} \big)^{\! 3} 
\Lambda_{c}\big(\tfrac{3}{4}; \tilde{\chi}_{c_{\scriptscriptstyle 2}}\tilde{\chi}_{M}\big)}.
\end{align*} 
We may apply the following evaluations:
\begin{align*}
&\frac{\Lambda_{c}\big(\tfrac{1}{4}; \chi_{a_{\scriptscriptstyle 1}'c_{\scriptscriptstyle 1}'}\chi_{D}\big)^{\! 3}}
{\Lambda_{c}\big(\tfrac{3}{4}; \chi_{a_{\scriptscriptstyle 1}'c_{\scriptscriptstyle 1}'}\chi_{D} \big)^{\! 3}} = 
\Big(\tfrac{\pi}{\gcd(\cond \chi_{a_{\scriptscriptstyle 1}'c_{\scriptscriptstyle 1}'}\chi_D, \, 8c)}\Big)^{\! \frac{3}{4}} 
\prod_{\substack{p \, \mid \, 2c \\ p \, \nmid \, \cond \chi_{a_{\scriptscriptstyle 1}'c_{\scriptscriptstyle 1}'}\chi_D}} 
\big(1+\chi_{a_{\scriptscriptstyle 1}'c_{\scriptscriptstyle 1}'}(p)\chi_D(p) p^{-\frac{1}{4}} +p^{-\frac{1}{2}} \big)^{\! 3} 
\cdot \left\lbrace \begin{array}{cc} 
\frac{\Gamma(1/8)^3}{\Gamma(3/8)^3} & \text{if $a_{\scriptscriptstyle 1}'  > 0$} \\ 
\frac{\Gamma(5/8)^3}{\Gamma(7/8)^3}  & \text{if $a_{\scriptscriptstyle 1}' < 0$} \end{array} \right. \\
&\frac{\Lambda_{c}\big(\tfrac{1}{4}; \tilde{\chi}_{c_{\scriptscriptstyle 2}}\tilde{\chi}_{M} \big)}
{\Lambda_{c}\big(\tfrac{3}{4}; \tilde{\chi}_{c_{\scriptscriptstyle 2}}\tilde{\chi}_{M} \big)} 
= \big(\tfrac{\pi}{c_{\scriptscriptstyle 2}}\big)^{\! \frac{1}{4}} 
\! \prod_{p \, \mid \, (2c \slash c_{\scriptscriptstyle 2})} \big(1+\tilde{\chi}_{c_{\scriptscriptstyle 2}}(p)\tilde{\chi}_M(p) p^{-\frac{1}{4}}+p^{-\frac{1}{2}} \big) 
\cdot \left\lbrace \begin{array}{cc} \frac{\Gamma(1/8)}{\Gamma(3/8)} & \text{if\, $c_{\scriptscriptstyle 2}\, M \equiv 1 \!\!\!\!\!\pmod 4$} \\ 
\frac{\Gamma(5/8)}{\Gamma(7/8)}  & \text{if $c_{\scriptscriptstyle 2}\, M \equiv 3 \!\!\!\!\!\pmod 4$.} \end{array} \right.
\end{align*} 
After expansion of 
$
\chi_{a_{\scriptscriptstyle 2}c_{\scriptscriptstyle 2}}(D) \frac{\Lambda_{c}\big(\frac{1}{4};\, \chi_{a_{\scriptscriptstyle 1}' c_{\scriptscriptstyle 1}'}\chi_{D}\big)^{\! 3}}{\Lambda_{c}\big(\frac{3}{4};\,  \chi_{a_{\scriptscriptstyle 1}' c_{\scriptscriptstyle 1}'}\chi_{D} \big)^{\! 3}}
$ 
and summation over $D,$ each term will vanish, unless its total character of \\ 
$D$ is trivial. This is only possible for $c_{\scriptscriptstyle 1}'$ coprime to $c_{\scriptscriptstyle 2}.$ Similarly, after expansion of 
$
\chi_{2c_{\scriptscriptstyle 1}}(M)\chi_{a_{\scriptscriptstyle 1}' c_{\scriptscriptstyle 1}'}(M)
\frac{\Lambda_{c}\big(\frac{1}{4};\, \tilde{\chi}_{c_{\scriptscriptstyle 2}}\tilde{\chi}_{M} \big)}
{\Lambda_{c}\big(\frac{3}{4};\, \tilde{\chi}_{c_{\scriptscriptstyle 2}}\tilde{\chi}_{M} \big)} 
$ 
and \\ summation in $M,$ each term will vanish, unless its total character of $M$ is trivial. 
\!The residue after both these summations is as follows: 
\begin{align*}
& R^{(c)}\big(\tfrac{1}{4}, \tfrac{1}{4}, \tfrac{1}{4}\big) \big(\tfrac{8c_{\scriptscriptstyle 1}}{c_{\scriptscriptstyle 2}}\big) 
\cdot \tfrac{1}{2}\sum_{\substack{a_{\scriptscriptstyle 1}' \in \{\pm 1, \pm 2\}\\ c_{\scriptscriptstyle 1}' \, \mid \, (c\slash c_{\scriptscriptstyle 2})}} 
c_{\scriptscriptstyle 1}'^{-\frac{3}{4}} 
\Big(\tfrac{c_{\scriptscriptstyle 1}^{}c_{\scriptscriptstyle 1}'}{\gcd(c_{\scriptscriptstyle 1}^{},\, c_{\scriptscriptstyle 1}')^{\scriptscriptstyle 2}}\Big)^{\!\! -\frac{1}{4}}  c_{\scriptscriptstyle 2}^{-\frac{1}{2}} \prod_{p\, \mid \, c_{\scriptscriptstyle 2}}\big(3+7p^{-\frac{1}{2}}+3p^{-1}\big) \\
&\cdot \prod_{p\, \mid \, (c\gcd(c_{\scriptscriptstyle 1}^{},\, c_{\scriptscriptstyle 1}')^{\scriptscriptstyle 2}\slash 
c_{\scriptscriptstyle 1}^{}c_{\scriptscriptstyle 1}'c_{\scriptscriptstyle 2}^{})} \big(1+p^{-\frac{1}{2}}\big)  
\prod_{p \, \mid \, (c\slash c_{\scriptscriptstyle 1}'c_{\scriptscriptstyle 2}^{})} \big(1+6p^{-\frac{1}{2}}+6p^{-1}+p^{-\frac{3}{2}}\big) \\
& \cdot \pi \left(\tfrac{\Gamma(1/8)}{\Gamma(3/8)} \, + \, \chi_{a_{\scriptscriptstyle 1}'}(-1)\tfrac{\Gamma(5/8)}{\Gamma(7/8)}\right) 
\cdot\left\lbrace \begin{array}{cc} \frac{\Gamma(1/8)^3}{\Gamma(3/8)^3} & \text{if $\chi_{a_{\scriptscriptstyle 1}'}(-1)=1$} \\ 
\frac{\Gamma(5/8)^3}{\Gamma(7/8)^3}  &  \;\;\;\, \text{if $\chi_{a_{\scriptscriptstyle 1}'}(-1)=-1$} \end{array} \right. 
\cdot \, \left\lbrace \begin{array}{cc} 2^{-\frac{1}{4}} \, +\, 7\cdot 2^{-\frac{11}{4}} & \text{if $2 \nmid a_{\scriptscriptstyle 1}'$} \\ 
2^{-\frac{13}{4}} \, + \, 2^{-\frac{15}{4}} &\; \text{if $2 \mid  a_{\scriptscriptstyle 1}'$.} \end{array} \right. 
\end{align*} 
The sum over $c_{\scriptscriptstyle 1}'$ in the first two lines can be expressed as a product over primes $p$ dividing $c.$ For each prime there are two cases depending on whether it divides $c_{\scriptscriptstyle 1}'.$ An Euler factor in this sum has the form
\begin{equation*}
\left\lbrace\begin{array}{cc} 
p^{-\frac{1}{4}}\big(1+p^{-\frac{1}{2}}\big)\big(1+6p^{-\frac{1}{2}}+p^{-1}\big) & \text{if $p\mid c_{\scriptscriptstyle 1}$} \\
p^{-\frac{1}{2}} \big(3+7p^{-\frac{1}{2}}+3p^{-1}\big) & \text{if $p\mid c_{\scriptscriptstyle 2}$} \\
\big(1+7p^{-\frac{1}{2}}+13p^{-1}+7p^{-\frac{3}{2}}+p^{-2}\big) &\;\, \text{if $p\mid c_{\scriptscriptstyle 3}$}.
\end{array}\right.
\end{equation*} 
The sum of the final line over the four possible values of $a_{\scriptscriptstyle 1}'$ yields a constant representing the contribution of the prime $2$ 
and the archimedean place. \!After simplification this constant is $\frac{9}{8}2^{\frac{1}{4}}(6+5\sqrt{2})\Gamma\left(\frac{1}{4}\right)^{\! 4}\slash \pi.$ 
Combining these two computations with the result of the previous proposition yields the desired formula. 
\end{proof}

We remark that the selection of $a_{\scriptscriptstyle 1} = 2,$ $a_{\scriptscriptstyle 2} = (-1)^{(c_{\scriptscriptstyle 2} - 1)\slash 2}$ is made 
in order to isolate fundamental discriminants $d$ which are positive and divisible by $8.$ Analogous computations could be made with other 
choices of $a_{\scriptscriptstyle 1},$ $a_{\scriptscriptstyle 2}$ to isolate other types of fundamental discriminants.

\section{The Sieve} \label{Sieve} 
{\bf Construction of the sieve.} For any square-free odd positive integer $h$ and $a_{\scriptscriptstyle 1}, a_{\scriptscriptstyle 2} 
\in \{\pm 1, \pm 2\},$ define 
\begin{equation*}
Z(s_{\scriptscriptstyle 1}, s_{\scriptscriptstyle 2}, s_{\scriptscriptstyle 3}, s_{\scriptscriptstyle 4}, 
\chi_{a_{\scriptscriptstyle 2}}, \chi_{a_{\scriptscriptstyle 1}}\!; \, h)\;\;  = 
\sum_{\substack{m_{\scriptscriptstyle 1}\!, \, m_{\scriptscriptstyle 2}, \, m_{\scriptscriptstyle 3}, 
\, d \, \geq \, 1\; \text{odd} \\ h^{2}\mid d}}  \, H(m_{\scriptscriptstyle 1}, m_{\scriptscriptstyle 2}, m_{\scriptscriptstyle 3}, d)
\chi_{a_{\scriptscriptstyle 1}}\!(m_{\scriptscriptstyle 1}m_{\scriptscriptstyle 2}m_{\scriptscriptstyle 3})
\chi_{a_{\scriptscriptstyle 2}}\!(d)
{m_{\scriptscriptstyle 1}^{- s_{\scriptscriptstyle 1}}
m_{\scriptscriptstyle 2}^{- s_{\scriptscriptstyle 2}} 
m_{\scriptscriptstyle 3}^{- s_{\scriptscriptstyle 3}} 
d_{}^{- s_{\scriptscriptstyle 4}}} 
\end{equation*}
and 
\begin{equation*}
Z_{\scriptscriptstyle 0}(s_{\scriptscriptstyle 1},s_{\scriptscriptstyle 2},s_{\scriptscriptstyle 3},s_{\scriptscriptstyle 4}, \chi_{a_{\scriptscriptstyle 2}}, \chi_{a_{\scriptscriptstyle 1}}) \;\; = 
\sum_{\substack{d_{\scriptscriptstyle 0} \, >  \, 0 \\ 
d_{\scriptscriptstyle 0} - \mathrm{odd \; \& \; sq. \; free}}}  
L^{\scriptscriptstyle (2)}(s_{\scriptscriptstyle 1}, \chi_{a_{\scriptscriptstyle 1} d_{\scriptscriptstyle 0}}) \, 
L^{\scriptscriptstyle (2)}(s_{\scriptscriptstyle 2}, \chi_{a_{\scriptscriptstyle 1} d_{\scriptscriptstyle 0}}) \,  
L^{\scriptscriptstyle (2)}(s_{\scriptscriptstyle 3}, \chi_{a_{\scriptscriptstyle 1} d_{\scriptscriptstyle 0}}) \, 
\chi_{a_{\scriptscriptstyle 2}}\!(d_{\scriptscriptstyle 0})\,  
d_{\scriptscriptstyle 0}^{ - s_{\scriptscriptstyle 4}}.
\end{equation*} 
As in \cite[Lemma~5.1]{D}, we can write 
\begin{equation} \label{mobius}
Z_{\scriptscriptstyle 0}(s_{\scriptscriptstyle 1},s_{\scriptscriptstyle 2},s_{\scriptscriptstyle 3},s_{\scriptscriptstyle 4}, \chi_{a_{\scriptscriptstyle 2}}, \chi_{a_{\scriptscriptstyle 1}})\;\, =\sum_{(h, 2) = 1}\,  \mu(h) Z(s_{\scriptscriptstyle 1},s_{\scriptscriptstyle 2},s_{\scriptscriptstyle 3},s_{\scriptscriptstyle 4}, \chi_{a_{\scriptscriptstyle 2}}, \chi_{a_{\scriptscriptstyle 1}}\!; \, h).
\end{equation} 
The function $Z(s_{\scriptscriptstyle 1},s_{\scriptscriptstyle 2},s_{\scriptscriptstyle 3},s_{\scriptscriptstyle 4}, \chi_{a_{\scriptscriptstyle 2}}, \chi_{a_{\scriptscriptstyle 1}}\!; \, h)$ 
can, \!in turn, \!be expressed in terms of the multiple {D}irichlet series we have discussed in the previous sections. \!To state the relation of these functions, let us first define
\begin{equation*} 
F\!(z_{\scriptscriptstyle 1}^{}\!, \, z_{\scriptscriptstyle 2}^{}, \, z_{\scriptscriptstyle 3}^{}, \, z_{\scriptscriptstyle 4}^{}; p) 
\;\; : =  \sum_{k_{\scriptscriptstyle 1}\!, \, k_{\scriptscriptstyle 2}, \, k_{\scriptscriptstyle 3}, \, k \, \ge \, 0} 
\, H(p^{k_{\scriptscriptstyle 1}}\!, p^{k_{\scriptscriptstyle 2}}\!, p^{k_{\scriptscriptstyle 3}}\!, p^{2k+3})
\, z_{\scriptscriptstyle 1}^{\scriptscriptstyle k_{\scriptscriptstyle 1}} 
z_{\scriptscriptstyle 2}^{\scriptscriptstyle k_{\scriptscriptstyle 2}}
z_{\scriptscriptstyle 3}^{\scriptscriptstyle k_{\scriptscriptstyle 3}} 
z_{\scriptscriptstyle 4}^{\scriptscriptstyle 2k} 
\end{equation*} 
and, for $a\in \{0, 1\},$ 
\begin{equation*} 
G^{\scriptscriptstyle (a)}(z_{\scriptscriptstyle 1}^{}\!, \, z_{\scriptscriptstyle 2}^{}, \, z_{\scriptscriptstyle 3}^{}, \, z_{\scriptscriptstyle 4}^{}; p)  \;\; :  =  
\sum_{\substack{k_{\scriptscriptstyle 1}\!, \, k_{\scriptscriptstyle 2}, \, k_{\scriptscriptstyle 3}, \, k \, \ge \, 0\\ 
k_{\scriptscriptstyle 1} + \, k_{\scriptscriptstyle 2} + \, k_{\scriptscriptstyle 3} \, \equiv \, a \!\!\!\!\!\pmod 2}} 
\,  H(p^{k_{\scriptscriptstyle 1}}\!, p^{k_{\scriptscriptstyle 2}}\!, p^{k_{\scriptscriptstyle 3}}\!, p^{2k+2})
\, z_{\scriptscriptstyle 1}^{\scriptscriptstyle k_{\scriptscriptstyle 1}} 
z_{\scriptscriptstyle 2}^{\scriptscriptstyle k_{\scriptscriptstyle 2}}
z_{\scriptscriptstyle 3}^{\scriptscriptstyle k_{\scriptscriptstyle 3}} 
z_{\scriptscriptstyle 4}^{\scriptscriptstyle 2k}.
\end{equation*} 
These are rational functions which can be computed explicitly from the $p$-part 
$
Z_{\scriptscriptstyle p}(s_{\scriptscriptstyle 1}, s_{\scriptscriptstyle 2}, s_{\scriptscriptstyle 3}, s_{\scriptscriptstyle 4}).$ 
Then, \!as in \cite[Section~5]{D}, one shows that\footnote{The variables $c$, $c'$, and $c_{\varepsilon}'$ in \cite[Section~5]{D} correspond to $c_{\scriptscriptstyle 1}$, $c_{\scriptscriptstyle 2}c_{\scriptscriptstyle 3}$, and $c_{\scriptscriptstyle 2}$ here.} 
\begin{equation} \label{eq: fundamental-eq-h}
\begin{split}
Z(s_{\scriptscriptstyle 1}, s_{\scriptscriptstyle 2}, s_{\scriptscriptstyle 3}, s_{\scriptscriptstyle 4}, 
& \chi_{a_{\scriptscriptstyle 2}}, \chi_{a_{\scriptscriptstyle 1}}\!; \, h) \\
= \, h^{- 2 s_{\scriptscriptstyle 4}} \!\!\sum_{h = c_{\scriptscriptstyle 1}c_{\scriptscriptstyle 2}c_{\scriptscriptstyle 3}} \;
\, & \chi_{a_{\scriptscriptstyle 2}}\!(c_{\scriptscriptstyle 1}) \chi_{a_{\scriptscriptstyle 1}}\!(c_{\scriptscriptstyle 2}) 
\Big(\tfrac{c_{\scriptscriptstyle 1}}{c_{\scriptscriptstyle 2}} \Big) 
Z^{(h)}(s_{\scriptscriptstyle 1}, s_{\scriptscriptstyle 2}, s_{\scriptscriptstyle 3}, s_{\scriptscriptstyle 4}; 
\chi_{a_{\scriptscriptstyle 2}}\tilde{\chi}_{c_{\scriptscriptstyle 2}}\!, 
\chi_{a_{\scriptscriptstyle 1}c_{\scriptscriptstyle 1}}) 
\prod_{p \, \mid \, c_{\scriptscriptstyle 1}} 
F(p^{\, - s_{\scriptscriptstyle 1}}\!, p^{\, - s_{\scriptscriptstyle 2}}\!, p^{\, - s_{\scriptscriptstyle 3}}\!, p^{\, - s_{\scriptscriptstyle 4}}; p) p^{\, - s_{\scriptscriptstyle 4}} \\
& \cdot\prod_{p \, \mid \, c_{\scriptscriptstyle 2}} 
G^{(1)}(p^{\, - s_{\scriptscriptstyle 1}}\!, p^{\, - s_{\scriptscriptstyle 2}}\!, p^{\, - s_{\scriptscriptstyle 3}}\!, p^{\, - s_{\scriptscriptstyle 4}}; p) 
\prod_{p \, \mid \, c_{\scriptscriptstyle 3}} G^{(0)}(p^{\, - s_{\scriptscriptstyle 1}}\!, p^{\, - s_{\scriptscriptstyle 2}}\!, 
p^{\, - s_{\scriptscriptstyle 3}}\!, p^{\, - s_{\scriptscriptstyle 4}}; p). 
\end{split}
\end{equation}
Note that the right-hand side of \eqref{eq: fundamental-eq-h} yields the meromorphic continuation of 
$
Z(s_{\scriptscriptstyle 1},s_{\scriptscriptstyle 2},s_{\scriptscriptstyle 3},s_{\scriptscriptstyle 4}, \chi_{a_{\scriptscriptstyle 2}}, \chi_{a_{\scriptscriptstyle 1}}\!; \, h).
$

\vskip10pt 
{\bf Recursive refinement of estimates.} For complex 
$
z_{\scriptscriptstyle 1}^{}\!, \, z_{\scriptscriptstyle 2}^{}, \, z_{\scriptscriptstyle 3}^{}, \, z_{\scriptscriptstyle 4}^{} 
$ 
and prime $p \ge 3,$ let 
\begin{align*} 
&f_{\scriptscriptstyle \mathrm{odd}}(z_{\scriptscriptstyle 1}^{}\!, \, z_{\scriptscriptstyle 2}^{}, \, z_{\scriptscriptstyle 3}^{}, 
\, z_{\scriptscriptstyle 4}^{}; p) \;\; : =  \sum_{k_{\scriptscriptstyle 1}\!, \, k_{\scriptscriptstyle 2}, \, k_{\scriptscriptstyle 3}, 
\, k \, \ge \, 0} \, H(p^{k_{\scriptscriptstyle 1}}\!, p^{k_{\scriptscriptstyle 2}}\!, p^{k_{\scriptscriptstyle 3}}\!, p^{2k+1}) 
\, z_{\scriptscriptstyle 1}^{\scriptscriptstyle k_{\scriptscriptstyle 1}} 
z_{\scriptscriptstyle 2}^{\scriptscriptstyle k_{\scriptscriptstyle 2}}
z_{\scriptscriptstyle 3}^{\scriptscriptstyle k_{\scriptscriptstyle 3}} 
z_{\scriptscriptstyle 4}^{\scriptscriptstyle 2k + 1} \\
&f_{\scriptscriptstyle \mathrm{even}}^{\, \pm}(z_{\scriptscriptstyle 1}^{}\!, \, z_{\scriptscriptstyle 2}^{}, 
\, z_{\scriptscriptstyle 3}^{}, \, z_{\scriptscriptstyle 4}^{}; p)  \;\; :  =  
\sum_{\substack{k_{\scriptscriptstyle 1}\!, \, k_{\scriptscriptstyle 2}, \, k_{\scriptscriptstyle 3}, \, k \, \ge \, 0\\ 
{\scriptscriptstyle (-1)^{k_{\scriptscriptstyle 1} + \, k_{\scriptscriptstyle 2} + \, k_{\scriptscriptstyle 3}}} \, = \, \pm 1}} 
\, H(p^{k_{\scriptscriptstyle 1}}\!, p^{k_{\scriptscriptstyle 2}}\!, p^{k_{\scriptscriptstyle 3}}\!, p^{2k}) 
\, z_{\scriptscriptstyle 1}^{\scriptscriptstyle k_{\scriptscriptstyle 1}} 
z_{\scriptscriptstyle 2}^{\scriptscriptstyle k_{\scriptscriptstyle 2}}
z_{\scriptscriptstyle 3}^{\scriptscriptstyle k_{\scriptscriptstyle 3}} 
z_{\scriptscriptstyle 4}^{\scriptscriptstyle 2k}.
\end{align*} 
We begin with the following lemma, see also \cite[Lemma~6.2]{D}.

\vskip10pt
\begin{lem}\label{Estimate inverse even part zeven} --- For every prime $p > 2$ and 
$|z| \le p^{\scriptscriptstyle - \frac{1}{2}}\!,$ we have 
\begin{equation*} 
\big| f_{\scriptscriptstyle \mathrm{odd}}\big(p^{\scriptscriptstyle -\frac{1}{2}}\!, \, p^{\scriptscriptstyle -\frac{1}{2}}\!, \, 
p^{\scriptscriptstyle -\frac{1}{2}}\!, \, z;  p \big) \big| <  107\, |z|
\end{equation*} 
\phantom{}
\begin{equation*} 
\big| f_{\scriptscriptstyle \mathrm{even}}^{\, -}\big(p^{\scriptscriptstyle -\frac{1}{2}}\!, \, p^{\scriptscriptstyle -\frac{1}{2}}\!, \, 
p^{\scriptscriptstyle -\frac{1}{2}}\!, \, z;  p \big) \big | <  564\, p^{\scriptscriptstyle -\frac{1}{2}}
\end{equation*} 
and 
\begin{equation*} 
\frac{1}{\big| f_{\scriptscriptstyle \mathrm{even}}^{\, +}\big(p^{\scriptscriptstyle -\frac{1}{2}}\!, \, 
p^{\scriptscriptstyle -\frac{1}{2}}\!, \, p^{\scriptscriptstyle -\frac{1}{2}}\!, \, z; p \big) \big|} <  25.
\end{equation*} 
\end{lem}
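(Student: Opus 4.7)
The starting point is the fact that the full $p$-part generating function $f(z_{1}, z_{2}, z_{3}, z_{4}; p)$ of \cite[Appendix B, Eq.~32]{D} is given as an \emph{explicit} rational function in the $z_{i}$ and $p$. Orthogonality of $\pm 1$ characters applied in the variable $z_{4}$, and jointly in $(z_{1}, z_{2}, z_{3})$, expresses each of the three target functions as a signed average of $f$ over sign flips:
\begin{align*}
f_{\mathrm{odd}}(z_{1}, z_{2}, z_{3}, z_{4}; p) &= \tfrac{1}{2}\bigl[f(z_{1}, z_{2}, z_{3}, z_{4}; p) - f(z_{1}, z_{2}, z_{3}, -z_{4}; p)\bigr], \\
f_{\mathrm{even}}^{\pm}(z_{1}, z_{2}, z_{3}, z_{4}; p) &= \tfrac{1}{4}\bigl[f(z_{1}, z_{2}, z_{3}, z_{4}; p) + f(z_{1}, z_{2}, z_{3}, -z_{4}; p) \\
&\qquad \pm f(-z_{1}, -z_{2}, -z_{3}, z_{4}; p) \pm f(-z_{1}, -z_{2}, -z_{3}, -z_{4}; p)\bigr].
\end{align*}
Specialising $z_{1} = z_{2} = z_{3} = p^{-1/2}$ and $z_{4} = z$ turns each of these into an explicit rational function of $z$ with coefficients in $\QQ(p^{-1/2})$.

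For the first bound, every monomial of $f_{\mathrm{odd}}$ carries an odd positive power of $z_{4}$, hence $f_{\mathrm{odd}}(p^{-1/2}, p^{-1/2}, p^{-1/2}, z; p) = z \cdot g(z; p)$ for an explicit rational $g$. A triangle-inequality estimate of numerator against denominator on the disk $|z| \le p^{-1/2} \le 3^{-1/2}$, $p \ge 3$, yields $|g(z; p)| < 107$. For the second bound, the constraint that $k_{1}+k_{2}+k_{3}$ is odd forces every monomial of $f_{\mathrm{even}}^{-}$ to carry a factor $p^{-(k_{1}+k_{2}+k_{3})/2}$ with odd $k_{1}+k_{2}+k_{3} \ge 1$. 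One therefore writes $f_{\mathrm{even}}^{-}(p^{-1/2}, p^{-1/2}, p^{-1/2}, z; p) = p^{-1/2} \cdot h(z; p)$ and obtains $|h(z; p)| < 564$ by the same mechanism, now with slightly more algebra because the full sign-averaged rational form must be used.

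The third estimate requires a uniform \emph{lower} bound on $|f_{\mathrm{even}}^{+}|$. Because the trivial term $H(1, 1, 1, 1) = 1$ already contributes $1$, the strategy is to write $f_{\mathrm{even}}^{+}(p^{-1/2}, p^{-1/2}, p^{-1/2}, z; p) = 1 + E(z; p)$ and show $|E(z; p)| \le 24/25$ on $|z| \le p^{-1/2}$, $p \ge 3$, after which the geometric-series bound immediately gives $1/|f_{\mathrm{even}}^{+}| < 25$. Writing $f_{\mathrm{even}}^{+}$ via the sign-averaging formula and expanding the rational factor of $f$, one bounds every nonconstant monomial of $E$ using $|z|, p^{-1/2} \le 3^{-1/2}$.

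The main obstacle is this third bound. The upper bounds on $f_{\mathrm{odd}}$ and $f_{\mathrm{even}}^{-}$ reduce to mechanical triangle-inequality manipulations once the rational expressions are written out, but the lower bound on $f_{\mathrm{even}}^{+}$ requires establishing non-vanishing on an explicit polydisk with a concrete quantitative margin. The tightest case is the smallest prime $p = 3$, where $p^{-1/2}$ is largest, and the argument must be sharp enough to keep $|E|$ strictly below $1$ there; the numerical constant $25$ is then essentially dictated by this case.
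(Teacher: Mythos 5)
The approaches to the first two bounds are in substance the same as the paper's (both ultimately estimate explicit rational functions by the triangle inequality; you derive them by sign-averaging the full $p$-part $f$, while the paper quotes them directly from \cite[Appendix~B]{D}). However, your plan for the third bound contains a genuine gap.

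You propose to write $f_{\mathrm{even}}^{+}(p^{-1/2},p^{-1/2},p^{-1/2},z;p) = 1 + E(z;p)$ and show $|E| \le 24/25$, on the grounds that the only ``constant'' contribution is $H(1,1,1,1)=1$. This is false. At $z=0$ only the $k=0$ (i.e.\ $l=0$) terms survive, but there are infinitely many of them: for every $(k_1,k_2,k_3)$ with $k_1+k_2+k_3$ even one has $H(p^{k_1},p^{k_2},p^{k_3},1)=1$, so the constant term in $z$ of $f_{\mathrm{even}}^{+}$ equals
\begin{equation*}
\sum_{\substack{k_1,k_2,k_3 \ge 0 \\ k_1+k_2+k_3 \text{ even}}} p^{-(k_1+k_2+k_3)/2}
\;=\; \tfrac{1}{2}\Bigl[(1-p^{-1/2})^{-3} + (1+p^{-1/2})^{-3}\Bigr],
\end{equation*}
which for $p=3$ is approximately $6.75$, nowhere near $1$. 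Hence $|E|$ is not $<24/25$ and the geometric-series argument cannot even get started. The paper instead works directly with the explicit rational expression for $1/f_{\mathrm{even}}^{+}$, bounding its numerator $(1-p^{-1})^3(1-z^2)^7(1-pz^4)$ above by $(1+p^{-1})^8 \le (4/3)^8$ and its denominator below by $2/5$ (with separate calculations for $p=3,5,7$ and a uniform estimate for $p\ge 11$), giving $(4/3)^8/(2/5) \approx 24.97 < 25$. You would need to replace your $1+E$ decomposition with this kind of direct numerator/denominator estimate (or some other lower-bound argument that accounts for the full constant term).

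A minor further remark: even in the cases where your decomposition is valid, the bound $|E| \le 24/25$ would only yield $1/|f_{\mathrm{even}}^{+}| \le 25$, not the strict inequality the lemma asserts; the margin must be strict.
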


\begin{proof} \!By \cite{D}, we have:  
\begin{equation*}
f_{\scriptscriptstyle \mathrm{odd}}\big(p^{\scriptscriptstyle -\frac{1}{2}}\!, \, p^{\scriptscriptstyle -\frac{1}{2}}\!, \, 
p^{\scriptscriptstyle -\frac{1}{2}}\!, \, z  ; p \big) \, = \, 
\frac{z \, (1 + 7 z^{\scriptscriptstyle 2} + 7 z^{\scriptscriptstyle 4} + z^{\scriptscriptstyle 6})}
{(1 - z^{\scriptscriptstyle 2})^{\scriptscriptstyle 7}  (1 - p \, z^{\scriptscriptstyle 4})} 
\end{equation*} 
\phantom{} 
\begin{equation*}
f_{\scriptscriptstyle \mathrm{even}}^{\, -}\big(p^{\scriptscriptstyle -\frac{1}{2}}\!, \, p^{\scriptscriptstyle -\frac{1}{2}}\!, \, 
p^{\scriptscriptstyle -\frac{1}{2}}\!, \, z  ;  p \big) \, = \, 
\frac{3 + p^{\scriptscriptstyle - 1} + (10 - 17 p^{\scriptscriptstyle - 1} + 3 p^{\scriptscriptstyle - 2})\, z^{\scriptscriptstyle 2}  
+ (3  - 17 p^{\scriptscriptstyle - 1} + 10 p^{\scriptscriptstyle - 2})\, z^{\scriptscriptstyle 4}  
+ (p^{\scriptscriptstyle  -1} + 3 p^{\scriptscriptstyle - 2})\,  z^{\scriptscriptstyle 6}}{\sqrt{p}\, 
(1 - p^{\scriptscriptstyle - 1})^{\scriptscriptstyle 3}(1 - z^{\scriptscriptstyle 2})^{\scriptscriptstyle 6} 
(1 - p\,  z^{\scriptscriptstyle 4})}
\end{equation*} 
and 
\begin{equation*}
\begin{split}
& 1\slash f_{\scriptscriptstyle \mathrm{even}}^{\, +}\big(p^{\scriptscriptstyle -\frac{1}{2}}\!, \, 
p^{\scriptscriptstyle -\frac{1}{2}}\!, \, p^{\scriptscriptstyle -\frac{1}{2}}\!, \, z; p \big)  \\ 
& = \frac{(1 - p^{\scriptscriptstyle - 1})^{\scriptscriptstyle 3}(1 - z^{\scriptscriptstyle 2})^{\scriptscriptstyle 7} 
(1 - p\,  z^{\scriptscriptstyle 4})}
{1 + 3 p^{\scriptscriptstyle - 1} + (7 - 15 p^{\scriptscriptstyle - 1} + p^{\scriptscriptstyle - 2} 
- p^{\scriptscriptstyle - 3})\, z^{\scriptscriptstyle 2} + (7  - 35 p^{\scriptscriptstyle - 1} + 35 p^{\scriptscriptstyle - 2} 
- 7 p^{\scriptscriptstyle - 3})\, z^{\scriptscriptstyle 4} + (1 - p^{\scriptscriptstyle  -1} + 15 p^{\scriptscriptstyle - 2} - 7 p^{\scriptscriptstyle - 3})\,  z^{\scriptscriptstyle 6} - (3 p^{\scriptscriptstyle - 2} + p^{\scriptscriptstyle - 3})
\,  z^{\scriptscriptstyle 8}}.
\end{split}
\end{equation*} 
It follows that 
\begin{equation*} 
\big|f_{\scriptscriptstyle \mathrm{odd}}\big(p^{\scriptscriptstyle -\frac{1}{2}}\!, \, p^{\scriptscriptstyle -\frac{1}{2}}\!, \, 
p^{\scriptscriptstyle -\frac{1}{2}}\!, \, z;  p \big) \big| \, \le \, 
\frac{1 + 7 |z|^{\scriptscriptstyle 2} + 7 |z|^{\scriptscriptstyle 4} + |z|^{\scriptscriptstyle 6}}
{(1 - |z|^{\scriptscriptstyle 2})^{\scriptscriptstyle 7}  (1 - p \, |z|^{\scriptscriptstyle 4})}  \cdot |z| \, \le \, 
\frac{1 + 7 p^{\scriptscriptstyle - 1} + 7 p^{\scriptscriptstyle - 2} + p^{\scriptscriptstyle - 3}}
{(1 - p^{\scriptscriptstyle - 1})^{\scriptscriptstyle 8}}  \cdot |z|.
\end{equation*} 
The expression 
\begin{equation*}  
\frac{1 + 7 p^{\scriptscriptstyle - 1} + 7 p^{\scriptscriptstyle - 2} + p^{\scriptscriptstyle - 3}}
{(1 - p^{\scriptscriptstyle - 1})^{\scriptscriptstyle 8}} \qquad \text{(for $p \ge 3$)}
\end{equation*} 
is increasing as a function of $p^{\scriptscriptstyle - 1}\!,$ and its value when $p = 3$ is $106.312... < 107.$ Similarly    
\begin{equation*} 
\big|f_{\scriptscriptstyle \mathrm{even}}^{\, -}\big(p^{\scriptscriptstyle -\frac{1}{2}}\!, \, p^{\scriptscriptstyle -\frac{1}{2}}\!, \, 
p^{\scriptscriptstyle -\frac{1}{2}}\!, \, z; p \big) \big| 
\, \le \, \frac{3 + 11 p^{\scriptscriptstyle - 1} + 20\, p^{\scriptscriptstyle - 2} + 20\, p^{\scriptscriptstyle - 3}  
+ 11 p^{\scriptscriptstyle - 4} + 3 p^{\scriptscriptstyle - 5}}
{(1 - p^{\scriptscriptstyle - 1})^{\scriptscriptstyle 10}} \cdot p^{\scriptscriptstyle - \frac{1}{2}} <  
564 p^{\scriptscriptstyle - \frac{1}{2}}
\end{equation*} 
as we had asserted. 

The numerator of 
$
1\slash \big|f_{\scriptscriptstyle \mathrm{even}}^{\, +}\big(p^{\scriptscriptstyle -\frac{1}{2}}\!, \, 
p^{\scriptscriptstyle -\frac{1}{2}}\!, \, p^{\scriptscriptstyle -\frac{1}{2}}\!, \, z; p \big) \big|
$ 
is  
\begin{equation*}
(1 - p^{\scriptscriptstyle - 1})^{\scriptscriptstyle 3}\, |1 - z^{\scriptscriptstyle 2}|^{\scriptscriptstyle 7} \, 
|1 - p\,  z^{\scriptscriptstyle 4}| < (1 + p^{\scriptscriptstyle - 1})^{\scriptscriptstyle 8} \le 
(\tfrac{4}{3})^{\scriptscriptstyle 8}. 
\end{equation*} 
To obtain a lower bound for the denominator, we first assume that $p \ge 11.$ In this case we have 
\begin{equation*} 
\begin{split} 
& \big|1 + 3 p^{\scriptscriptstyle - 1} + (7 - 15 p^{\scriptscriptstyle - 1} + p^{\scriptscriptstyle - 2} 
- p^{\scriptscriptstyle - 3})\, z^{\scriptscriptstyle 2} + (7  - 35 p^{\scriptscriptstyle - 1} + 35 p^{\scriptscriptstyle - 2} 
- 7 p^{\scriptscriptstyle - 3})\, z^{\scriptscriptstyle 4} + (1 - p^{\scriptscriptstyle  -1} + 15 p^{\scriptscriptstyle - 2} - 7 p^{\scriptscriptstyle - 3})\,  z^{\scriptscriptstyle 6} - (3 p^{\scriptscriptstyle - 2} + p^{\scriptscriptstyle - 3})
\,  z^{\scriptscriptstyle 8} \big| \\ 
& \ge 1 + 3 p^{\scriptscriptstyle - 1} - \big|(7 - 15 p^{\scriptscriptstyle - 1} + p^{\scriptscriptstyle - 2} 
- p^{\scriptscriptstyle - 3})\, z^{\scriptscriptstyle 2} + (7  - 35 p^{\scriptscriptstyle - 1} + 35 p^{\scriptscriptstyle - 2} 
- 7 p^{\scriptscriptstyle - 3})\, z^{\scriptscriptstyle 4} + (1 - p^{\scriptscriptstyle  -1} + 15 p^{\scriptscriptstyle - 2} - 7 p^{\scriptscriptstyle - 3})\,  z^{\scriptscriptstyle 6} - (3 p^{\scriptscriptstyle - 2} + p^{\scriptscriptstyle - 3})
\,  z^{\scriptscriptstyle 8} \big| \\
& \ge 1 - 4 p^{\scriptscriptstyle - 1} - 22\, p^{\scriptscriptstyle - 2} - 37 p^{\scriptscriptstyle - 3} - 
37 p^{\scriptscriptstyle - 4} - 22 p^{\scriptscriptstyle - 5} - 10\, p^{\scriptscriptstyle - 6} - p^{\scriptscriptstyle - 7} 
> \tfrac{2}{5}.
\end{split}
\end{equation*} 
When $p = 3$ we have   
\begin{equation*} 
\left|2 - \tfrac{2 \, z^{\scriptscriptstyle 2}}{27} 
(5 z^{\scriptscriptstyle 6} - 28 z^{\scriptscriptstyle 4} + 14 z^{\scriptscriptstyle 2} - 28)\right| \ge 
2 - \tfrac{2 \, |z|^{\scriptscriptstyle 2}}{27} 
\big|5 z^{\scriptscriptstyle 6} - 28 z^{\scriptscriptstyle 4} + 14 z^{\scriptscriptstyle 2} - 28 \big| 
\ge 2 - \tfrac{2}{81}\left(28 + \tfrac{14}{3} + \tfrac{28}{9} + \tfrac{5}{27}\right) > 
\tfrac{2}{5}.
\end{equation*} 
When $p = 5$ we have   
\begin{equation*} 
\left|\tfrac{8}{5} - \tfrac{8 \, z^{\scriptscriptstyle 2}}{125} 
(2 z^{\scriptscriptstyle 6} - 21 z^{\scriptscriptstyle 4} - 21 z^{\scriptscriptstyle 2} - 63) \right| \ge 
\tfrac{8}{5} - \tfrac{8 \, |z|^{\scriptscriptstyle 2}}{125} 
\big| 2 z^{\scriptscriptstyle 6} - 21 z^{\scriptscriptstyle 4} - 21 z^{\scriptscriptstyle 2} - 63 \big| 
\ge \tfrac{8}{5} - \tfrac{8}{625}\left(63 + \tfrac{21}{5} + \tfrac{21}{25} + \tfrac{2}{125}\right) > 
\tfrac{2}{5}.
\end{equation*} 
Similarly, when $p = 7$ we have 
\begin{equation*} 
\left|\tfrac{10}{7} - \tfrac{2 \, z^{\scriptscriptstyle 2}}{343} 
(11 z^{\scriptscriptstyle 6} - 196 z^{\scriptscriptstyle 4} - 462 z^{\scriptscriptstyle 2} - 836) \right| > 
\tfrac{2}{5}.
\end{equation*} 
The last assertion follows from these inequalities.  
\end{proof}

For ease of notation, we define 
\begin{equation*}
\tilde{\mathscr{Z}}^{(c)}
(s; \chi_{a_{\scriptscriptstyle 2}c_{\scriptscriptstyle 2}}, 
\chi_{a_{\scriptscriptstyle 1}c_{\scriptscriptstyle 1}}) \, = \, 
\frac{(s - 1)^{\scriptscriptstyle 7} \big(s - \frac{3}{4}\big)}{(s + 1)^{\scriptscriptstyle 8}} \cdot
Z^{(c)}\big(\tfrac{1}{2}, \tfrac{1}{2}, \tfrac{1}{2}, s; 
\chi_{a_{\scriptscriptstyle 2}c_{\scriptscriptstyle 2}}, 
\chi_{a_{\scriptscriptstyle 1}c_{\scriptscriptstyle 1}}\big).
\end{equation*}

\begin{prop}\label{key-proposition} --- Let $c_{\scriptscriptstyle 1}, c_{\scriptscriptstyle 2}$ and 
$c_{\scriptscriptstyle 3}$ be odd positive integers such that 
$
c = c_{\scriptscriptstyle 1}c_{\scriptscriptstyle 2}c_{\scriptscriptstyle 3}
$ 
is square-free, \!and let $\omega(c_{\scriptscriptstyle i})$ denote the number of prime factors of 
$c_{\scriptscriptstyle i},$ for $1\le i \le 3.$ Then, for every $\delta > 0$ and 
$a_{\scriptscriptstyle 1}, \, a_{\scriptscriptstyle 2} \in \{\pm 1, \pm 2\},$ we have the estimate 
\begin{equation}  \label{eq: fundamental-estimate-optimized} 
\tilde{\mathscr{Z}}^{(c)}
(s; \chi_{a_{\scriptscriptstyle 2}c_{\scriptscriptstyle 2}}, 
\chi_{a_{\scriptscriptstyle 1}c_{\scriptscriptstyle 1}})  
\, \ll_{\scriptscriptstyle \delta} \, 
(1+ |s|)^{\scriptscriptstyle 5(1 - \Re(s)) \, + \, \delta} 
A_{0}^{\omega(c_{\scriptscriptstyle 1}c_{\scriptscriptstyle 2})} A_{1}^{\omega(c_{\scriptscriptstyle 3})} S(c, \delta)
c_{\scriptscriptstyle 1}^{\scriptscriptstyle 3 (1 - \Re(s))} 
\, c_{\scriptscriptstyle 2}^{\scriptscriptstyle \frac{5}{2}(1 - \Re(s))} 
\, c_{\scriptscriptstyle 3}^{\scriptscriptstyle \mathrm{max}\left\{3 \, - \, 4 \Re(s), \, 2 \, - \frac{5 \Re(s)}{2}\right\}}
c^{\scriptscriptstyle \delta}
\end{equation} 
with $A_{1}\! = 25 + 16775 \, A_{0}$ and  
\begin{equation*} 
S(c, \delta) \; =  
\sum_{a \, = \, \pm 1, \, \pm 2}\;\, \sum_{b \, \mid \, c} 
\; \sum_{(d_{\scriptscriptstyle 0}, \, 2) \, = \, 1} 
\, \big|L^{\scriptscriptstyle (2)}\big(\tfrac{1}{2},
\chi_{a b d_{\scriptscriptstyle 0}} \big)\big|^{3} d_{\scriptscriptstyle 0}^{\scriptscriptstyle - 1 - (\delta \slash 30)}
\end{equation*} 
for all $s$ with $\frac{1}{2} \le \Re(s) \le \frac{4}{5}.$
\end{prop}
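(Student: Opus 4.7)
The plan is to argue by induction on $\omega(c_{\scriptscriptstyle 3}),$ using Proposition \ref{convexity-bound} as the base case $\omega(c_{\scriptscriptstyle 3}) = 0.$ Each inductive step peels off a single prime $p \mid c_{\scriptscriptstyle 3},$ redistributing it into either the $c_{\scriptscriptstyle 1}$- or $c_{\scriptscriptstyle 2}$-aspect of a closely related multiple Dirichlet series, where the basic convexity bound is more favorable. The gain comes from the fact that the $p$-part bounds of Lemma \ref{Estimate inverse even part zeven} save a small power of $p$ beyond what the convexity bound alone delivers.

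To implement the inductive step at a prime $p \mid c_{\scriptscriptstyle 3},$ I would isolate the $p$-parts of the summation variables $m_{\scriptscriptstyle 1}, m_{\scriptscriptstyle 2}, m_{\scriptscriptstyle 3}, d$ in $Z^{(c/p)}\big(\cdot\,;\chi_{a_{\scriptscriptstyle 2}c_{\scriptscriptstyle 2}}, \chi_{a_{\scriptscriptstyle 1}c_{\scriptscriptstyle 1}}\big)$ via the twisted multiplicativity of $H.$ Using the vanishing $H(p^{k_{\scriptscriptstyle 1}}, p^{k_{\scriptscriptstyle 2}}, p^{k_{\scriptscriptstyle 3}}, p^{l}) = 0$ when $k_{\scriptscriptstyle 1}+k_{\scriptscriptstyle 2}+k_{\scriptscriptstyle 3}$ and $l$ are both odd, the sum over $p$-parts collapses to three terms indexed by the parities of $l$ and $k_{\scriptscriptstyle 1}+k_{\scriptscriptstyle 2}+k_{\scriptscriptstyle 3},$ yielding a local identity of the schematic form
\begin{equation*}
Z^{(c/p)}\big(\cdot\,;\chi_{a_{\scriptscriptstyle 2}c_{\scriptscriptstyle 2}}, \chi_{a_{\scriptscriptstyle 1}c_{\scriptscriptstyle 1}}\big) = f_{\scriptscriptstyle \mathrm{even}}^{+}\, Z^{(c)}\big(\cdot\,;\chi_{a_{\scriptscriptstyle 2}c_{\scriptscriptstyle 2}}, \chi_{a_{\scriptscriptstyle 1}c_{\scriptscriptstyle 1}}\big) + \epsilon_{1}f_{\scriptscriptstyle \mathrm{even}}^{-}\, Z^{(c)}\big(\cdot\,;\chi_{a_{\scriptscriptstyle 2}c_{\scriptscriptstyle 2}}\tilde{\chi}_p, \chi_{a_{\scriptscriptstyle 1}c_{\scriptscriptstyle 1}}\big) + \epsilon_{2}f_{\scriptscriptstyle \mathrm{odd}}\, Z^{(c)}\big(\cdot\,;\chi_{a_{\scriptscriptstyle 2}c_{\scriptscriptstyle 2}}, \chi_{a_{\scriptscriptstyle 1}c_{\scriptscriptstyle 1}}\tilde{\chi}_p\big),
\end{equation*}
where each $f_{\bullet}$ is evaluated at $\big(p^{-1/2}, p^{-1/2}, p^{-1/2}, p^{-s}; p\big)$ and $\epsilon_{1}, \epsilon_{2}$ are signs coming from character evaluations at $p.$ Solving for the first $Z^{(c)}$ term (and multiplying by $(s-1)^{\scriptscriptstyle 7}(s-\tfrac{3}{4})/(s+1)^{\scriptscriptstyle 8}$) produces the corresponding identity for $\tilde{\mathscr{Z}}^{(c)}.$

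Applying Lemma \ref{Estimate inverse even part zeven}, the three resulting coefficients are bounded by $25,$ $25 \cdot 564\, p^{-1/2},$ and $25 \cdot 107\, p^{-\Re(s)}.$ In the first term $\tilde{\mathscr{Z}}^{(c/p)}$ is at a level with one fewer prime in $c_{\scriptscriptstyle 3},$ so the inductive hypothesis applies directly. In the second, the character twist $\chi_{a_{\scriptscriptstyle 2}c_{\scriptscriptstyle 2}}\tilde{\chi}_p$ corresponds to the refactorization $c = c_{\scriptscriptstyle 1} \cdot (c_{\scriptscriptstyle 2}p) \cdot (c_{\scriptscriptstyle 3}/p),$ so the inductive bound contributes an extra $p^{5(1-\Re(s))/2},$ and combined with $p^{-1/2}$ the net $p$-exponent is $2 - \tfrac{5}{2}\Re(s).$ Similarly, in the third the refactorization $c_{\scriptscriptstyle 1}p,\, c_{\scriptscriptstyle 2},\, c_{\scriptscriptstyle 3}/p$ contributes $p^{3(1-\Re(s))}$ which, combined with $p^{-\Re(s)},$ gives net exponent $3 - 4\Re(s).$ Both are dominated by $p^{\max\{3-4\Re(s),\, 2-5\Re(s)/2\}},$ which is exactly the per-prime contribution charged to $c_{\scriptscriptstyle 3}$ in the target bound. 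The numerical constants accumulate as $25 + 25 \cdot 564\, A_{\scriptscriptstyle 0} + 25 \cdot 107\, A_{\scriptscriptstyle 0} = 25 + 16775\, A_{\scriptscriptstyle 0} = A_{\scriptscriptstyle 1},$ closing the induction.

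\textbf{Main obstacle.} The delicate point is ensuring that the inversion producing $1/f_{\scriptscriptstyle \mathrm{even}}^{+}$ in the local identity is uniformly bounded; this is precisely the third estimate in Lemma \ref{Estimate inverse even part zeven} and it requires $|p^{-s}| \leq p^{-1/2},$ forcing the restriction $\Re(s) \geq \tfrac{1}{2}.$ The upper restriction $\Re(s) \leq \tfrac{4}{5}$ is needed so that $\max\{3-4\Re(s),\, 2-5\Re(s)/2\} \geq 0,$ preventing the induction from producing an estimate weaker than the starting convexity bound. One must also verify that $S(c, \delta),$ the $s$-dependent factor $(1+|s|)^{5(1-\Re(s))+\delta},$ and the term $c^{\delta}$ pass through the induction unchanged; this follows because each character modification $\chi \mapsto \chi\tilde{\chi}_p$ preserves conductor divisibility by $8c,$ and $S(c, \delta)$ is already defined to sum over all divisors $b \mid c.$
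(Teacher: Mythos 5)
Your proposal is correct and matches the paper's argument essentially step by step: the same induction on $\omega(c_{\scriptscriptstyle 3})$, the same three-term local identity obtained by isolating the $p$-part (which the paper writes as $\tilde{\mathscr{Z}}^{(c)} = \chi_{a_{\scriptscriptstyle 2}c_{\scriptscriptstyle 2}}(p)\,\tilde{\mathscr{Z}}^{(cp)}(\cdot;\chi_{a_{\scriptscriptstyle 2}c_{\scriptscriptstyle 2}},\chi_{a_{\scriptscriptstyle 1}c_{\scriptscriptstyle 1}p})f_{\scriptscriptstyle \mathrm{odd}} + \chi_{a_{\scriptscriptstyle 1}c_{\scriptscriptstyle 1}}(p)\,\tilde{\mathscr{Z}}^{(cp)}(\cdot;\chi_{a_{\scriptscriptstyle 2}c_{\scriptscriptstyle 2}p^{*}},\chi_{a_{\scriptscriptstyle 1}c_{\scriptscriptstyle 1}})f_{\scriptscriptstyle \mathrm{even}}^{-} + \tilde{\mathscr{Z}}^{(cp)}(\cdot;\chi_{a_{\scriptscriptstyle 2}c_{\scriptscriptstyle 2}},\chi_{a_{\scriptscriptstyle 1}c_{\scriptscriptstyle 1}})f_{\scriptscriptstyle \mathrm{even}}^{+}$), the same division by $f_{\scriptscriptstyle \mathrm{even}}^{+}$ controlled by Lemma \ref{Estimate inverse even part zeven}, the same exponent bookkeeping producing $3-4\Re(s)$ and $2-\tfrac{5}{2}\Re(s)$, and the same constant accumulation $A_{1} = 25 + 25\cdot(107+564)A_{0} = 25 + 16775A_{0}$. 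Your identification of the two constraints $\Re(s)\geq\tfrac{1}{2}$ (so $|p^{-s}|\leq p^{-1/2}$ and the lemma applies) and $\Re(s)\leq\tfrac{4}{5}$ (so the max exponent stays nonnegative and the additive $25$ can be absorbed) is exactly the reasoning behind the stated range.
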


\begin{proof} \!\!As in the proof of \cite[Proposition~6.3]{D}, we proceed by induction on 
$\omega(c_{\scriptscriptstyle 3}).$ If $c_{\scriptscriptstyle 3} = 1$ then, for every 
$\delta > 0,$ $c_{\scriptscriptstyle 1}, c_{\scriptscriptstyle 2}$ odd positive integers such that 
$
c_{\scriptscriptstyle 1}c_{\scriptscriptstyle 2}
$ 
is square-free, and $s$ with $\frac{1}{2} \le \Re(s) \le \frac{4}{5},$ we have from the bound 
\eqref{eq: basic-initial-estimate} that 
\begin{equation*}
\big|\tilde{\mathscr{Z}}^{(c_{\scriptscriptstyle 1}c_{\scriptscriptstyle 2})}
(s; \chi_{a_{\scriptscriptstyle 2}c_{\scriptscriptstyle 2}}, 
\chi_{a_{\scriptscriptstyle 1}c_{\scriptscriptstyle 1}})\big|  
\, \le  \, B(\delta) \, (1+ |s|)^{\scriptscriptstyle 5(1 - \Re(s)) \, + \, \delta} 
A_{0}^{\omega(c_{\scriptscriptstyle 1}c_{\scriptscriptstyle 2})} 
S(c_{\scriptscriptstyle 1}c_{\scriptscriptstyle 2}, \delta)
c_{\scriptscriptstyle 1}^{\scriptscriptstyle 3 (1 - \Re(s))} 
\, c_{\scriptscriptstyle 2}^{\scriptscriptstyle \frac{5}{2}(1 - \Re(s))} 
\, (c_{\scriptscriptstyle 1}c_{\scriptscriptstyle 2})^{\scriptscriptstyle \delta}
\end{equation*} 
for some $B(\delta) > 0.$ 

To carry out the inductive step, suppose $p$ is an odd prime with $p \nmid c$. Then (as in \cite{D}) we can write 
\begin{equation*} 
\begin{split}
\tilde{\mathscr{Z}}^{(c)}(s; 
\chi_{a_{\scriptscriptstyle 2}c_{\scriptscriptstyle 2}}, \chi_{a_{\scriptscriptstyle 1}c_{\scriptscriptstyle 1}}) 
& = \chi_{a_{\scriptscriptstyle 2}c_{\scriptscriptstyle 2}}(p)\,
\tilde{\mathscr{Z}}^{(c p)}
(s; \chi_{a_{\scriptscriptstyle 2}c_{\scriptscriptstyle 2}}, 
\chi_{a_{\scriptscriptstyle 1}c_{\scriptscriptstyle 1}p})\, f_{\scriptscriptstyle \mathrm{odd}}
\big(p^{\scriptscriptstyle - \frac{1}{2}}\!, \, p^{\scriptscriptstyle - \frac{1}{2}}\!, \, p^{\scriptscriptstyle - \frac{1}{2}}\!, 
\, p^{- s}; p \big)\\
&+ \, \chi_{a_{\scriptscriptstyle 1}c_{\scriptscriptstyle 1}}\!(p)\, 
\tilde{\mathscr{Z}}^{(c p)}(s; \chi_{a_{\scriptscriptstyle 2}c_{\scriptscriptstyle 2} p^{\scriptscriptstyle *}},
\chi_{a_{\scriptscriptstyle 1}c_{\scriptscriptstyle 1}})\, 
f_{\scriptscriptstyle \mathrm{even}}^{-}
\big(p^{\scriptscriptstyle - \frac{1}{2}}\!,  \, p^{\scriptscriptstyle - \frac{1}{2}}\!, \, p^{\scriptscriptstyle - \frac{1}{2}}\!, 
\, p^{- s}; p \big)\\ 
& + \, \tilde{\mathscr{Z}}^{(c p)}
(s; \chi_{a_{\scriptscriptstyle 2}c_{\scriptscriptstyle 2}}, 
\chi_{a_{\scriptscriptstyle 1}c_{\scriptscriptstyle 1}})\, 
f_{\scriptscriptstyle \mathrm{even}}^{+}
\big(p^{\scriptscriptstyle - \frac{1}{2}}\!,  \, p^{\scriptscriptstyle - \frac{1}{2}}\!, \, p^{\scriptscriptstyle - \frac{1}{2}}\!, 
\, p^{- s}; p \big)
\end{split}
\end{equation*} 
where 
$
p^{\scriptscriptstyle *} : = (- 1)^{\scriptscriptstyle (p \, - \, 1)\slash 2} p.
$ 
Applying the inequalities in Lemma \ref{Estimate inverse even part zeven}, it follows that, for $\Re(s) \ge \frac{1}{2},$
\begin{equation*}
\begin{split} 
\big|\tilde{\mathscr{Z}}^{(c p)} (s; \chi_{a_{\scriptscriptstyle 2}c_{\scriptscriptstyle 2}}, 
\chi_{a_{\scriptscriptstyle 1}c_{\scriptscriptstyle 1}})\big|  
& <  25\, \big|\tilde{\mathscr{Z}}^{(c)}(s; 
\chi_{a_{\scriptscriptstyle 2}c_{\scriptscriptstyle 2}}, 
\chi_{a_{\scriptscriptstyle 1}c_{\scriptscriptstyle 1}}) \big| + 107\cdot 25\, \big|\tilde{\mathscr{Z}}^{(c p)}(s; \chi_{a_{\scriptscriptstyle 2}c_{\scriptscriptstyle 2}}, 
\chi_{a_{\scriptscriptstyle 1}c_{\scriptscriptstyle 1}p}) \big|\, p^{\scriptscriptstyle - \Re(s)} \\
& + 564\cdot 25\, \big|\tilde{\mathscr{Z}}^{(c p)}(s; \chi_{a_{\scriptscriptstyle 2}c_{\scriptscriptstyle 2} p^{\scriptscriptstyle *}},
\chi_{a_{\scriptscriptstyle 1}c_{\scriptscriptstyle 1}}) \big|\, 
p^{\scriptscriptstyle -\frac{1}{2}}.
\end{split}
\end{equation*} 
Let $K(c_{\scriptscriptstyle 1}, c_{\scriptscriptstyle 2}, c_{\scriptscriptstyle 3}, s, \delta)$ denote the right-hand side of 
\eqref{eq: fundamental-estimate-optimized}, i.e., 
\begin{equation*}
K(c_{\scriptscriptstyle 1}, c_{\scriptscriptstyle 2}, c_{\scriptscriptstyle 3}, s, \delta) 
= B(\delta) \, (1+ |s|)^{\scriptscriptstyle 5(1 - \Re(s)) \, + \, \delta} 
A_{0}^{\omega(c_{\scriptscriptstyle 1}c_{\scriptscriptstyle 2})} A_{1}^{\omega(c_{\scriptscriptstyle 3})} S(c, \delta)
c_{\scriptscriptstyle 1}^{\scriptscriptstyle 3 (1 - \Re(s))} 
\, c_{\scriptscriptstyle 2}^{\scriptscriptstyle \frac{5}{2}(1 - \Re(s))} 
\, c_{\scriptscriptstyle 3}^{\scriptscriptstyle \mathrm{max}\left\{3 \, - \, 4 \Re(s), \, 2 \, - \frac{5 \Re(s)}{2}\right\}}
c^{\scriptscriptstyle \delta}
\end{equation*} 
and note that $S(c, \delta) \le S(c p, \delta).$ Taking $s$ such that $\frac{1}{2} \le \Re(s) \le \frac{4}{5},$ we have by the induction hypothesis that 
\begin{equation*} 
\begin{split}
\big|\tilde{\mathscr{Z}}^{(c p)} (s; \chi_{a_{\scriptscriptstyle 2}c_{\scriptscriptstyle 2}}, 
\chi_{a_{\scriptscriptstyle 1}c_{\scriptscriptstyle 1}})\big| \, & < \, 
K(c_{\scriptscriptstyle 1}, c_{\scriptscriptstyle 2}, c_{\scriptscriptstyle 3}, s, \delta)  
\frac{S(c p, \delta)}{S(c, \delta)}\cdot \Big(25 + 2675 \, A_{0} \, p^{\scriptscriptstyle 3 \, - \, 4 \Re(s) + \delta} + 
14100\, A_{0} \, p^{\scriptscriptstyle 2 \, - \frac{5\Re(s)}{2} + \, \delta}\Big)\\
& < \, K(c_{\scriptscriptstyle 1}, c_{\scriptscriptstyle 2}, c_{\scriptscriptstyle 3}p, s, \delta) 
\end{split}
\end{equation*} 
and the proposition follows.
\end{proof} 

\vskip4pt 
Using the last proposition, we can now estimate the function 
\begin{equation*}
\tilde{\mathscr{Z}}(s, \chi_{a_{\scriptscriptstyle 2}}, \chi_{a_{\scriptscriptstyle 1}}\!; \, h) \, := \, 
\frac{(s - 1)^{\scriptscriptstyle 7} \big(s - \frac{3}{4}\big)}{(s + 1)^{\scriptscriptstyle 8}} \cdot 
Z\big(\tfrac{1}{2}, \tfrac{1}{2}, \tfrac{1}{2}, s, \chi_{a_{\scriptscriptstyle 2}}, \chi_{a_{\scriptscriptstyle 1}}\!; \, h \big).
\end{equation*} 

\begin{thm}\label{Main-h-Estimate} --- For any square-free odd positive integer $h,$ 
$a_{\scriptscriptstyle 1}, \, a_{\scriptscriptstyle 2} \in \{\pm 1, \pm 2\},$ and every $\delta > 0,$ 
we have 
\begin{equation*} 
\tilde{\mathscr{Z}}(s, \chi_{a_{\scriptscriptstyle 2}}, \chi_{a_{\scriptscriptstyle 1}}\!; \, h)\, 
\ll_{\delta}\,     (1+ |s|)^{\scriptscriptstyle 5(1 - \Re(s)) \, + \, \delta} 
\, S(h, \delta)\, h^{\scriptscriptstyle 2 \, - \frac{9 \Re(s)}{2} + \, 2 \delta}
\end{equation*} 
on the strip $\frac{2}{3} \le \Re(s) \le \frac{4}{5},$ and 
\begin{equation*} 
h^{2 s} \tilde{\mathscr{Z}}(s, \chi_{a_{\scriptscriptstyle 2}}, \chi_{a_{\scriptscriptstyle 1}}\!; \, h) \,  \ll_{\delta} \, 
(1+ |s|)^{\scriptscriptstyle 5(1 - \Re(s)) \, + \, \delta} 
\, S\big(h, \tfrac{\delta}{5}\big) \, h^{\scriptscriptstyle 2 \delta} 
\end{equation*} 
on the strip $\frac{4}{5} \le \Re(s) \le 1+ \frac{\delta}{5}.$ 
\end{thm}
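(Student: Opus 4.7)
The plan is to start from the factorization identity \eqref{eq: fundamental-eq-h}, specialized at $s_1=s_2=s_3=\tfrac{1}{2}$ and multiplied through by $\frac{(s-1)^{7}(s-\frac{3}{4})}{(s+1)^{8}}$, which writes
\[
\tilde{\mathscr{Z}}(s,\chi_{a_2},\chi_{a_1};h) \; = \; h^{-2s}\!\!\sum_{h=c_1c_2c_3}\!\varepsilon(c_1,c_2)\,\tilde{\mathscr{Z}}^{(h)}\!\bigl(s;\chi_{a_2}\tilde\chi_{c_2},\chi_{a_1c_1}\bigr)\,\mathscr{L}_{c_1,c_2,c_3}(s),
\]
where $\mathscr{L}_{c_1,c_2,c_3}(s)$ is the product of the local factors $F(p^{-\frac12},\ldots,p^{-s};p)\cdot p^{-s}$ for $p\mid c_1$, $G^{(1)}(p^{-\frac12},\ldots,p^{-s};p)$ for $p\mid c_2$, and $G^{(0)}(p^{-\frac12},\ldots,p^{-s};p)$ for $p\mid c_3$. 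I would bound each $\tilde{\mathscr{Z}}^{(h)}$-summand by Proposition \ref{key-proposition} (taking $c=h$), and bound each local factor by an elementary lemma in the spirit of Lemma \ref{Estimate inverse even part zeven}, derived from the explicit rational-function description of $Z_p$ recalled in \cite[Appendix B]{D}.

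For the first strip $\tfrac{2}{3}\le\Re(s)\le\tfrac{4}{5}$, the core computation is the per-prime exponent. The local-factor bounds I expect are $|F\cdot p^{-s}|\ll p^{-\Re(s)}$, $|G^{(1)}|\ll p^{-1/2}$, and $|G^{(0)}|\ll 1$. Combining with Proposition \ref{key-proposition} and the overall prefactor $h^{-2s}$, the exponent attached to a prime $p\mid h$ is
\[
3-6\Re(s)\quad(p\mid c_1),\qquad 2-\tfrac{9}{2}\Re(s)\quad(p\mid c_2),\qquad \max\!\bigl\{3-6\Re(s),\,2-\tfrac{9}{2}\Re(s)\bigr\}\quad(p\mid c_3).
\]
Since $3-6\Re(s)\le 2-\tfrac{9}{2}\Re(s)$ precisely when $\Re(s)\ge\tfrac{2}{3}$ (with equality at $\tfrac{2}{3}$), the uniform per-prime bound $p^{\,2-\frac{9}{2}\Re(s)}$ holds throughout the strip. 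Multiplying over primes and absorbing the $O(3^{\omega(h)})=O(h^{\delta})$ factorizations into the $h^{2\delta}$ slack yields the first estimate.

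For the second strip $\tfrac{4}{5}\le\Re(s)\le 1+\tfrac{\delta}{5}$, Proposition \ref{key-proposition} no longer applies. Instead I would invoke the Phragmén--Lindelöf principle for $h^{2s}\tilde{\mathscr{Z}}(s,\chi_{a_2},\chi_{a_1};h)$: on the left edge $\Re(s)=\tfrac{4}{5}$, multiplying the bound from the first strip by $h^{2s}$ gives $O\bigl((1+|s|)^{1+\delta}S(h,\delta)\,h^{2\delta}\bigr)$; on the right edge $\Re(s)=1+\tfrac{\delta}{5}$, absolute convergence of \eqref{MDS-vers1} (using the estimate $|P_d|\ll d_1^{1/2+\eta}$ recalled in the proof of Proposition \ref{convexity-bound}) yields $O\bigl(S(h,\tfrac{\delta}{5})\,h^{O(\delta)}\bigr)$. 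Convexity interpolation between these two, with the polynomial growth in $|s|$ sitting inside the factor $(1+|s|)^{5(1-\Re(s))+\delta}$, delivers the second estimate.

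The main obstacle is the explicit verification of the local-factor bounds for $F$, $G^{(0)}$, and $G^{(1)}$: these require unpacking the rational function for $Z_p$ and evaluating it at $(p^{-1/2},p^{-1/2},p^{-1/2},p^{-s})$ in the manner of Lemma \ref{Estimate inverse even part zeven}, but with sums restricted to $l\ge 2$ (or $l\ge 3$) rather than starting at the constant term. The extra $p^{-1/2}$ savings in $G^{(1)}$, analogous to the bound for $f_{\mathrm{even}}^{-}$, is what makes the $c_2$-exponent coincide exactly with $2-\tfrac{9}{2}\Re(s)$; without this refinement the clean uniform exponent breaks, and the strip cannot be pushed further to the left than $\tfrac{2}{3}$.
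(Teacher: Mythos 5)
Your proposal matches the paper's proof in structure and in all the key estimates: you use the factorization identity \eqref{eq: fundamental-eq-h} to reduce to the $\tilde{\mathscr{Z}}^{(h)}$ summands, bound the local factors $F,G^{(0)},G^{(1)}$ exactly as in \cite[Lemma~6.1]{D} (yielding $|F|\ll 1$, $|G^{(0)}|\ll 1$, $|G^{(1)}|\ll p^{-1/2}$ on $\Re(s)\ge\tfrac12$), combine with Proposition~\ref{key-proposition} to obtain the per-prime exponents $3-6\Re(s)$, $2-\tfrac{9}{2}\Re(s)$, and $\max\{3-6\Re(s),2-\tfrac{9}{2}\Re(s)\}$, observe that $2-\tfrac{9}{2}\Re(s)$ dominates precisely on $\Re(s)\ge\tfrac23$, absorb the $3^{\omega(h)}A_0^{\omega}A_1^{\omega}c^{\delta}$ factors into $h^{2\delta}$ via $\omega(h)\ll\log h/\log\log h$, and finish the second strip by Phragm\'en--Lindel\"of between $\Re(s)=\tfrac45$ and $\Re(s)=1+\tfrac{\delta}{5}$ (the latter edge being the absolute-convergence region). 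This is the paper's argument.
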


\begin{proof} \!The proof is similar to that given in \cite[Theorem~6.4]{D}. By \eqref{eq: fundamental-eq-h} we have 
\begin{equation*}
\begin{split}
|\tilde{\mathscr{Z}}(s, \chi_{a_{\scriptscriptstyle 2}}, \chi_{a_{\scriptscriptstyle 1}}\!; \, h)| \, \le \,
h^{- 2 \Re(s)} \!\!\sum_{h = c_{\scriptscriptstyle 1}c_{\scriptscriptstyle 2}c_{\scriptscriptstyle 3}} \; 
& \big|\tilde{\mathscr{Z}}^{(h)}(s; \chi_{a_{\scriptscriptstyle 2}}\tilde{\chi}_{c_{\scriptscriptstyle 2}}\!, 
\chi_{a_{\scriptscriptstyle 1}c_{\scriptscriptstyle 1}})\big| 
\prod_{p \, \mid \, c_{\scriptscriptstyle 1}} \big|F\big(p^{\, - \frac{1}{2}}\!, p^{\, - \frac{1}{2}}\!, p^{\, - \frac{1}{2}}\!, p^{\, - s}; p\big) \big| p^{\, - \Re(s)} \\
&\cdot \prod_{p \, \mid \, c_{\scriptscriptstyle 2}} \big|G^{(1)}\big(p^{\, - \frac{1}{2}}\!, p^{\, - \frac{1}{2}}\!, p^{\, - \frac{1}{2}}\!, p^{\, - s}; p\big) \big|
\prod_{p \, \mid \, c_{\scriptscriptstyle 3}} \big|G^{(0)}\big(p^{\, - \frac{1}{2}}\!, p^{\, - \frac{1}{2}}\!, p^{\, - \frac{1}{2}}\!, p^{\, - s}; p\big) \big|.
\end{split}
\end{equation*} 
For each $a_{\scriptscriptstyle 2} \in \{\pm 1, \pm 2\},$ we can write 
$
\chi_{a_{\scriptscriptstyle 2}}\tilde{\chi}_{c_{\scriptscriptstyle 2}} = 
\chi_{a_{\scriptscriptstyle 2}' c_{\scriptscriptstyle 2}} 
$ 
for some $a_{\scriptscriptstyle 2}' \in \{\pm 1, \pm 2\}.$ By \cite[Lemma~6.1]{D}, \!we have the estimates 
\begin{equation*}
\begin{split} 
&F\big(p^{\scriptscriptstyle - \frac{1}{2}}\!, \, p^{\scriptscriptstyle - \frac{1}{2}}\!, \, 
p^{\scriptscriptstyle - \frac{1}{2}}\!, \, p^{\, - s}; \, p \big) = 14 \, + \,  p^{\scriptscriptstyle 1 - 2 s} \, + \, O(p^{\scriptscriptstyle  - 2 \Re(s)})\\
&G^{\scriptscriptstyle (0)}\big(p^{\scriptscriptstyle - \frac{1}{2}}\!, \, p^{\scriptscriptstyle - \frac{1}{2}}\!, \, p^{\scriptscriptstyle - \frac{1}{2}}\!, 
\, p^{\, - s}; \, p \big) = 14 \, + \,  p^{\scriptscriptstyle 1 - 2 s}\, + \, O(p^{\scriptscriptstyle -1})\\
&G^{\scriptscriptstyle (1)}\big(p^{\scriptscriptstyle - \frac{1}{2}}\!, \, p^{\scriptscriptstyle - \frac{1}{2}}\!, \, p^{\scriptscriptstyle - \frac{1}{2}}\!, 
\, p^{\, - s}; \, p \big) = O\big(p^{\scriptscriptstyle - \frac{1}{2}}\big)
\end{split}
\end{equation*} 
the implied constants in the $O$-symbols being independent of $s, p.$ Applying Proposition \ref{key-proposition}, 
we see that, for every $s$ in the strip $\frac{2}{3} \le \Re(s) \le \frac{4}{5},$ and $\delta > 0,$ we have 
\begin{equation*}
\begin{split}
\tilde{\mathscr{Z}}(s, \chi_{a_{\scriptscriptstyle 2}}, \chi_{a_{\scriptscriptstyle 1}}\!; \, h) \,  &\ll_{\delta} \, 
(1+ |s|)^{\scriptscriptstyle 5(1 - \Re(s)) \, + \, \delta} \, B^{\omega(h)} S(h, \delta) 
\, h^{\scriptscriptstyle 2 \, - \frac{9 \Re(s)}{2} + \, \delta} 
\sum_{h = c_{\scriptscriptstyle 1}c_{\scriptscriptstyle 2}c_{\scriptscriptstyle 3}} 1 \\
& \ll_{\delta} \, (1+ |s|)^{\scriptscriptstyle 5(1 - \Re(s)) \, + \, \delta} \, (3 B)^{\omega(h)} S(h, \delta) 
\, h^{\scriptscriptstyle 2 \, - \frac{9 \Re(s)}{2} + \, \delta}
\end{split}
\end{equation*} 
for some positive constant $B.$ In particular, if $\Re(s) = \frac{4}{5}$ we have  
\begin{equation*} 
h^{2 s} \tilde{\mathscr{Z}}(s, \chi_{a_{\scriptscriptstyle 2}}, \chi_{a_{\scriptscriptstyle 1}}\!; \, h) \,  \ll_{\delta} \, 
(1+ |s|)^{\scriptscriptstyle 1 \, + \, \delta} \, (3 B)^{\omega(h)} S(h, \delta) 
\, h^{\scriptscriptstyle \delta}.
\end{equation*} 
On the other hand, if $\Re(s) = 1 + \frac{\delta}{5}$ we trivially have (by \eqref{eq: basic-initial-estimate}) that 
\begin{equation*} 
h^{2 s} \tilde{\mathscr{Z}}(s, \chi_{a_{\scriptscriptstyle 2}}, \chi_{a_{\scriptscriptstyle 1}}\!; \, h) 
\,  \ll_{\delta} \, B_{1}^{\omega(h)} S\big(h, \tfrac{\delta}{5}\big) \, h^{\scriptscriptstyle \delta} 
\end{equation*} 
for some computable positive constant $B_{1},$ and the theorem follows by applying the Phragmen-Lindel\"of 
principle, and the well-known estimate 
\begin{equation*}
\omega(h) \ll \frac{\log \, h}{\log \, \log \, h}.
\end{equation*} 
\end{proof}

\section{Proofs of Main Theorems} \label{Proofs} 
{\bf Proof of Theorem \ref{Main Theorem A}.} \!The function 
$ 
\tilde{\mathscr{Z}}_{\scriptscriptstyle 0}(s, \chi_{a_{\scriptscriptstyle 2}}, \chi_{a_{\scriptscriptstyle 1}}) : = 
(s + 1)^{\scriptscriptstyle - 8} (s - 1)^{\scriptscriptstyle 7} \big(s - \frac{3}{4}\big)
Z_{\scriptscriptstyle 0}\big(\tfrac{1}{2}, \tfrac{1}{2}, \tfrac{1}{2}, s, 
\, \chi_{a_{\scriptscriptstyle 2}}, \chi_{a_{\scriptscriptstyle 1}}\big) 
$ 
is holomorphic in the half-plane $\Re(s) > 1,$ and in this region we have 
\begin{equation} \label{eq: sqfree-non-sqfree1/2}
\tilde{\mathscr{Z}}_{\scriptscriptstyle 0}(s, \chi_{a_{\scriptscriptstyle 2}}, \chi_{a_{\scriptscriptstyle 1}})\; = 
\sum_{h - \mathrm{odd}}\,  \mu(h) 
\tilde{\mathscr{Z}}(s, \chi_{a_{\scriptscriptstyle 2}}, \chi_{a_{\scriptscriptstyle 1}}\!; \, h).
\end{equation} 
Take $s$ such that 
$
\frac{2}{3} + \delta_{\scriptscriptstyle 0} < \Re(s) < \frac{4}{5},
$ 
for a small $\delta_{\scriptscriptstyle 0} > 0,$ and let $0 < \delta < \frac{9 \delta_{\scriptscriptstyle 0}}{4}.$ By Theorem \ref{Main-h-Estimate} and the definition of 
$S(h, \delta),$ we have that 
\begin{equation*} 
\begin{split}
&\sum_{h - \mathrm{odd \; \& \; sq. \; free}}\, 
|\tilde{\mathscr{Z}}(s, \chi_{a_{\scriptscriptstyle 2}}, \chi_{a_{\scriptscriptstyle 1}}\!; \, h)| \\
&\ll_{\scriptscriptstyle \delta} \, (1+ |s|)^{\scriptscriptstyle 5(1 - \Re(s)) \, + \, \delta} \sum_{a \, = \, \pm 1, \, \pm 2}\;\, 
\sum_{h - \mathrm{odd \; \& \; sq. \; free}} 
h^{\scriptscriptstyle 2 \, - \frac{9 \Re(s)}{2} + \, \delta} \, \sum_{b \, \mid \, h} \; \sum_{(d_{\scriptscriptstyle 0}, \, 2) \, = \, 1} 
\, \big|L^{\scriptscriptstyle (2)}\big(\tfrac{1}{2},
\chi_{a b d_{\scriptscriptstyle 0}} \big)\big|^{3} d_{\scriptscriptstyle 0}^{\scriptscriptstyle - 1 - (\delta \slash 60)}\\
&\ll_{\scriptscriptstyle \delta} \, (1 + |s|)^{\scriptscriptstyle 5(1 - \Re(s)) \, + \, \delta} \sum_{a \, = \, \pm 1, \, \pm 2} \;\; 
\sum_{b, \, d_{\scriptscriptstyle 0} - \mathrm{odd \; \& \; sq. \; free}}  \, \big|L\big(\tfrac{1}{2}, \chi_{a b d_{\scriptscriptstyle 0}} \big)\big|^{3} 
b^{\scriptscriptstyle 2 \, - \frac{9 \Re(s)}{2} + \, \delta}d_{\scriptscriptstyle 0}^{\scriptscriptstyle - 1 - (\delta \slash 60)} 
\, \sum_{m} \, m^{\scriptscriptstyle 2 \, - \frac{9 \Re(s)}{2} + \, \delta}.
\end{split}
\end{equation*} 
The series $\sum \, m^{\scriptscriptstyle 2 \, - \frac{9 \Re(s)}{2} + \, \delta}$ is convergent, and for $a \in \{\pm 1, \, \pm 2\},$
\begin{equation*} 
\sum_{b, \, d_{\scriptscriptstyle 0} - \mathrm{odd \; \& \; sq. \; free}}\, \big|L\big(\tfrac{1}{2}, \chi_{a b d_{\scriptscriptstyle 0}} \big)\big|^{3} 
b^{\scriptscriptstyle 2 \, - \frac{9 \Re(s)}{2} + \, \delta}d_{\scriptscriptstyle 0}^{\scriptscriptstyle - 1 - (\delta \slash 60)}  \, < \, 
\sum_{(n, 2) = 1} d(n)\, \big|L\big(\tfrac{1}{2}, \chi_{a n} \big)\big|^{3} n^{\scriptscriptstyle - 1 - (\delta \slash 60)}. 
\end{equation*} 
The last series is easily seen to be convergent by Heath-Brown's estimate \cite{H-B}. Thus the right-hand side of 
\eqref{eq: sqfree-non-sqfree1/2} conver-\\ges absolutely and uniformly on every compact subset of the strip 
$
\frac{2}{3} <  \Re(s) < \frac{4}{5}.
$

In a completely analogous fashion, one shows that the right-hand side of \eqref{eq: sqfree-non-sqfree1/2} is 
convergent absolutely and uniformly on every compact subset of a strip 
$
\frac{4}{5} - \delta_{\scriptscriptstyle 0} < \Re(s) < 1 + \delta_{\scriptscriptstyle 0},
$ 
for small positive $\delta_{\scriptscriptstyle 0},$ which, by Weierstrass Theorem, completes the analytic continuation 
of the function 
$
\tilde{\mathscr{Z}}_{\scriptscriptstyle 0}(s, \chi_{a_{\scriptscriptstyle 2}}, \chi_{a_{\scriptscriptstyle 1}})
$ 
to the half-plane $\Re(s) > 2 \slash 3.$

We set $a_{\scriptscriptstyle 1} = 2$ and $a_{\scriptscriptstyle 2} = 1,$ hence 
$
\chi_{a_{\scriptscriptstyle 1}}\! \chi_{\scriptscriptstyle d_{\scriptscriptstyle 0}}(n) = 
\left(\!\frac{8 d_{\scriptscriptstyle 0}}{n} \!\right)
$ 
for $n$ odd. It only remains to compute the residue of $Z_{\scriptscriptstyle 0}(s)$ at $s = \frac{3}{4}.$ 
By \eqref{mobius}, \eqref{eq: fundamental-eq-h}, and \eqref{mds-residue}, this residue has the form:
\begin{align*}
\underset{s = \frac{3}{4}}{\Res} \; Z_{\scriptscriptstyle 0}(s) \, = \, &\tfrac{9}{256\pi} 2^{\frac{1}{4}} (-181+128\sqrt{2}) 
\Gamma\left(\tfrac{1}{4}\right)^{\! 4} \zeta\left(\tfrac{1}{2}\right)^{\! 7} \\
&\cdot \sum_{\substack{h - \text{odd} \\ h = c_{\scriptscriptstyle 1}c_{\scriptscriptstyle 2}c_{\scriptscriptstyle 3}}} \, \mu(h) h^{- \frac{3}{2}} 
c_{\scriptscriptstyle 1}^{- \frac{1}{4}}
\prod_{p\mid c_{\scriptscriptstyle 1}} \big(1 - p^{- \frac{1}{2}} \big)^{\! 8} \big(1 + p^{- \frac{1}{2}}\big)^{\! 2} \big(1 + 6p^{- \frac{1}{2}} + p^{-1}\big) 
F\big(p^{\, - \frac{1}{2}}\!, p^{\, - \frac{1}{2}}\!, p^{\, - \frac{1}{2}}\!, p^{\, - \frac{3}{4}}; p\big) p^{\, - \frac{3}{4}} \\
&\hskip67pt \cdot c_{\scriptscriptstyle 2}^{- \frac{1}{2}} 
\prod_{p\mid c_{\scriptscriptstyle 2}} \big(1 - p^{- \frac{1}{2}}\big)^{\! 8} \big(1 + p^{- \frac{1}{2}}\big)\big(3 + 7p^{- \frac{1}{2}} + 3p^{-1}\big) 
G^{(1)}\big(p^{\, - \frac{1}{2}}\!, p^{\, - \frac{1}{2}}\!, p^{\, - \frac{1}{2}}\!, p^{\, - \frac{3}{4}}; p\big)\\
&\hskip50pt \cdot \prod_{p\mid c_{\scriptscriptstyle 3}} \big(1 - p^{- \frac{1}{2}}\big)^{\! 8} \big(1 + p^{- \frac{1}{2}}\big) 
\big(1 + 7p^{- \frac{1}{2}} + 13p^{-1} + 7p^{- \frac{3}{2}} + p^{-2}\big) 
G^{(0)}\big(p^{\, - \frac{1}{2}}\!, p^{\, - \frac{1}{2}}\!, p^{\, - \frac{1}{2}}\!, p^{\, - \frac{3}{4}}; p\big).
\end{align*} 
The sum over $h, c_{\scriptscriptstyle 1}, c_{\scriptscriptstyle 2}, c_{\scriptscriptstyle 3}$ is Eulerian. 
The factor at an odd prime $p$ can be computed as the sum of four explicit rational functions, corresponding to the cases 
$p \nmid h,$ $p\mid c_{\scriptscriptstyle 1},$ $p\mid c_{\scriptscriptstyle 2},$ and $p\mid c_{\scriptscriptstyle 3}.$ 
After some cancellation, this factor is
\begin{equation*}
\big(1 - p^{- \frac{1}{2}}\big)^{\! 5} \big(1 + p^{- \frac{1}{2}}\big) 
\big(1 + 4p^{- \frac{1}{2}} + 11p^{-1} + 10p^{- \frac{3}{2}} - 11p^{-2} + 11p^{-3} - 4p^{- \frac{7}{2}} - p^{- 4} \big)
\end{equation*}
which we denote as $P(p^{- \frac{1}{2}}).$ This completes the proof.

\vskip10pt
{\bf Proof of Theorem \ref{Main Theorem B}.} The argument is standard, and is included for the sake of completeness. The Mellin transform of $W,$ 
\begin{equation*} 
\widehat{W}(s) \; = \int_{0}^{\infty} W(u) u^{s}\, \frac{du}{u}
\end{equation*} 
is entire, and by using the bounds \eqref{eq: weight-function} and 
integration by parts we have the estimate
\begin{equation}  \label{eq: Mellin-weight-estimate}
|\widehat{W}(s)| \, < \, \frac{1}{3 + \Re(s)}\cdot \frac{1}{|s| \, |s + 1| \, |s + 2|} \qquad \text{(when $\Re(s) \, > \, - 3$).}
\end{equation} 
Applying the Mellin inversion formula, we can express
\begin{equation*} 
\sideset{}{^*}\sum_{(d, \, 2) \, = \, 1} 
L\big(\tfrac{1}{2}, \chi_{\scriptscriptstyle 2 d} \big)^{\! \scriptscriptstyle 3}\, 
W\big(\tfrac{d}{x}\big) \, = \, \frac{1}{2 \, \pi \, i}\int\limits_{(2)} \widehat{W}(s) Z_{\scriptscriptstyle 0}(s)\, x^{s}\, ds.
\end{equation*} 
Since 
$
Z_{\scriptscriptstyle 0}(s)  \ll_{\scriptscriptstyle \delta} 
\mathrm{max}\{1, (\! 1 + |s|)^{\scriptscriptstyle 5(1 - \Re(s)) \, + \,  \delta}\},
$ 
it follows from the upper bound estimate \eqref{eq: Mellin-weight-estimate} that we can shift the line of integration 
to $\Re(s) = 2\slash 3 + \delta.$ Thus
\begin{equation*} 
\sideset{}{^*}\sum_{(d, \, 2) \, = \, 1} 
L\big(\tfrac{1}{2}, \chi_{\scriptscriptstyle 2 d} \big)^{\! \scriptscriptstyle 3}\, 
W\big(\tfrac{d}{x}\big) \, = \, x\, \underset{s = 1}{\mathrm{Res}}
\big(\widehat{W}(s) Z_{\scriptscriptstyle 0}(s)\, x^{s - 1}\big) \, + \, 
\underset{s = \frac{3}{4}}{\mathrm{Res}} \; Z_{\scriptscriptstyle 0}(s) \cdot
\widehat{W}\big(\tfrac{3}{4}\big) \, x^{\scriptscriptstyle \frac{3}{4}}
\, + \; O_{\scriptscriptstyle \delta}\left(x^{\scriptscriptstyle \frac{2}{3} + \delta}\right)
\end{equation*} 
and the theorem follows.

%



\begin{thebibliography}{0} 

\bibitem{AR} 
M.W.~Alderson and M.O.~Rubinstein: 
\emph{Conjectures and experiments concerning the moments of {$L(1/2,\chi_d)$}}. 
Exp. Math. \textbf{21} (2012), no.~3, 307--328. 

\bibitem{Boh}
S.~Bochner: 
\emph{A theorem on analytic continuation of functions in several variables}. 
Ann. of Math. \!(2) \textbf{39} (1938), no.~1, 14--19. 

\bibitem{BBF2}
B.~Brubaker, D.~Bump, and S.~Friedberg: 
\emph{Weyl group multiple {D}irichlet series, \!{E}isenstein series and crystal bases}. 
Ann. of Math. (2) \textbf{173} (2011), no.~2, 1081--1120. 

\bibitem{CG1}
G.~Chinta and P.E.~Gunnells: 
\!\emph{{W}eyl Group Multiple {D}irichlet series Constructed From Quadratic Characters}. 
Invent. Math. \textbf{167} (2007), no.~2, 327--353.

\bibitem{CG2}
G.~Chinta and P.E.~Gunnells: 
\emph{Constructing {W}eyl group multiple {D}irichlet series}. 
J. Amer. Math. Soc. \textbf{23} (2010), no.~1, 189--215. 

\bibitem{D} 
A.~Diaconu: 
\emph{On the third moment of $L\big(\tfrac{1}{2}, \chi_{\scriptscriptstyle d}\big)$ {I}: 
the rational function field case}. arXiv:1801.00486. 

\bibitem{DGH}
A.~Diaconu, D.~Goldfeld, and J.~Hoffstein: 
\emph{Multiple {D}irichlet series and moments of zeta and {$L$}-functions}. 
Compositio Math. \textbf{139} (2003), no.~3, 297--360. 

\bibitem{DV} 
A.~Diaconu and V.~Pa\c sol:
\emph{Moduli of hyperelliptic curves and multiple {D}irichlet Series}. 
Preprint. 

\bibitem{F} 
A.M.~Florea: 
\emph{The fourth moment of quadratic {D}irichlet {$L$}-functions over function fields}. 
Geom. Funct. Anal. \textbf{27} (2017), no.~3, 541--595. 

\bibitem{H-B} 
D.R.~Heath-Brown: 
\emph{A mean value estimate for real character sums}. 
Acta Arith. \textbf{72} (1995), no.~3, 235--275. 

\bibitem{McN2} 
P.J.~McNamara: 
\emph{The metaplectic {C}asselman-{S}halika formula}. 
Trans. Amer. Math. Soc. \textbf{368} (2016), no.~4, 2913--2937. 

\bibitem{Sound} 
K.~Soundararajan: 
\emph{Nonvanishing of quadratic {D}irichlet {$L$}-functions at {$s = \frac12$}}. 
Ann. of Math. (2) \textbf{152} (2000), no.~2, 447--488. 

\bibitem{White1} 
I.~Whitehead: \emph{Affine {W}eyl group multiple {D}irichlet series: \!type {$\widetilde{A}$}}. 
Compos. Math. \textbf{152} (2016), no.~12, 2503--2523.  

\bibitem{White2} 
I.~Whitehead: \emph{Multiple {D}irichlet series for affine {W}eyl groups}. 
arXiv:1406.0573. 

\bibitem{Young} 
M.P.~Young: 
\emph{The third moment of quadratic {D}irichlet {$L$}-functions}. 
Selecta Math. (N.S.) \textbf{19} (2013), no.~2, 509--543. 

\bibitem{Zha} 
Q.~Zhang: 
\emph{On the cubic moment of quadratic {D}irichlet {$L$}-functions}. 
Math. Res. Lett. \textbf{12} (2005), no.~2-3, 413--424. 

\end{thebibliography}
\end{document}